\documentclass[a4paper,10pt]{article}
\usepackage[utf8x]{inputenc}
\usepackage{tracefnt,amsmath,tabu,array}
\usepackage{amssymb,graphicx,setspace,amsfonts,amsbsy}
\usepackage{pifont,latexsym,ifthen,amsthm,rotating,calc,textcase,booktabs,cancel,slashed}

\newtheorem{theorem}{Theorem}[section]
\newtheorem{lemma}[theorem]{Lemma}
\newtheorem{corollary}[theorem]{Corollary}
\newtheorem{proposition}[theorem]{Proposition}

\newtheorem{remark}[theorem]{Remark}
\newcommand{\filledbox}{\leavevmode
  \hbox to.77778em{%
  \hfil\vbox to.675em{\hrule width.6em height.6em}\hfil}}

\newcommand{\Rm}{{\mathbb R}}

\begin{document}
\tabulinesep=1.0mm
\title{Nonexistence of multi-bubble radial solutions to the 3D energy critical wave equation}

\author{Ruipeng Shen\\
Centre for Applied Mathematics\\
Tianjin University\\
Tianjin, China
}

\maketitle

\begin{abstract}
 In this work we consider the focusing, energy-critical wave equation in 3D radial case. It has been verified that any global or type II blow-up solution decomposes into a superposition of several decoupled grounds states, a free wave and a small error, as time tends to infinity or the blow-up time. This is usually called soliton resolution. However, all known examples of soliton resolution in the 3D radial case come with no more than one soliton. In this work we prove the nonexistence of any global or type II blow up solution with two or more solitons, thus give a complete classification of asymptotic behaviours of radial solutions. 
\end{abstract}

\section{Introduction} 

\subsection{Background}

In this work we consider radial solutions to the following focusing, energy-critical wave equation in 3-dimensional space 
\[
 \left\{\begin{array}{ll} \partial_t^2 u - \Delta u = + |u|^4 u, & (x,t) \in \Rm^3 \times \Rm;  \\ (u,u_t)|_{t=0} = (u_0,u_1)\in \dot{H}^1\times L^2. & \end{array} \right. \qquad \hbox{(CP1)}
\]
The solutions satisfy the energy conservation law:
\[
 E = \int_{\Rm^3} \left(\frac{1}{2}|\nabla u(x,t)|^2 + \frac{1}{2}|u_t(x,t)|^2 - \frac{1}{6}|u(x,t)|^6\right) {\rm d} x = \hbox{Const}. 
\]
The equation is invariant under the dilation transformation. More precisely, if $u$ is a solution to (CP1), then 
\[ 
 u_\lambda = \frac{1}{\lambda^{1/2}} u\left(\frac{x}{\lambda}, \frac{t}{\lambda}\right), \qquad \lambda\in \Rm^+
\]
is also a solution to (CP1) with exactly the same energy. 

The local well-posedness of (CP1) follows from a combination of the standard fixed-point argument and suitable Strichartz estimates. This argument does not depend on the sign of the nonlinearity, thus applies to the defocusing wave equation $\partial_t^2 u - \Delta u = - |u|^4 u$ as well. More details can be found in  Kapitanski \cite{loc1} and  Lindblad-Sogge \cite{ls}, for examples. The global behaviour in the focusing case, however, is much more complicated than the defocusing case. In short, all finite-energy solutions to the defocusing equation are globally defined for all $t$ and scatter in both time directions. Readers may refer to \cite{mg1, enscatter1, enscatter2, ss1, ss2, struwe}, for instance.  Although solutions to (CP1) with small initial data still scatter, large solutions may blow up in finite time or exhibit more interesting global behaviour. Before we start to give a brief review of the global/asymptotic behaviour, we first give a typical example of global non-scattering solution, i.e. the Talenti-Aubin solution, also called a ground state. 
\[
 W(x) = \left(\frac{1}{3} + |x|^2\right)^{-1/2}. 
\]
Indeed, $W(x)$ solves the elliptic equation $-\Delta u = |u|^4 u$. It is clear that all solutions to this elliptic equation are also solutions to (CP1) independent of time. They are called the stationary solutions to (CP1). Among all stationary solutions (not necessarily radially symmetric) to (CP1), $W(x)$ comes with the smallest energy. This is why we call $W$ a ground state. In the radial case, all non-trivial finite-energy radial stationary solutions can be given by the dilations of $W$, up to a sign, i.e. 
\begin{align*}
 &\{\pm W_\lambda: \lambda\in \Rm^+\};& & W_\lambda \doteq \frac{1}{\lambda^{1/2}} W\left(\frac{x}{\lambda}\right). 
\end{align*}
All these dilations come with the same energy. Thus they are all called ground states. 

\paragraph{Finite time blow-up}  A classical local theory implies that if $u$ blows up at time $T_+ \in \Rm^+$, then  
\[
 \|u\|_{L^5 L^{10} ([0,T_+)\times \Rm^3)} = + \infty. 
\]
Indeed there are two types of finite time blow-up solutions. Type I blow-up solutions satisfy 
 \[
   \lim_{t\rightarrow T_+} \|(u,u_t)\|_{\dot{H}^1\times L^2} = + \infty. 
  \]
 We may construct such a solution in the following way: we start by an explicit solution of (CP1)
 \[
   u(x,t) = \left(\frac{3}{4}\right)^{1/4} (T_+ -t)^{-1/2}, 
  \]
 which blows up as $t\rightarrow T_+$. A combination of cut-off techniques and the finite speed of propagation then gives a finite-energy type I blow-up solution. 

\paragraph{Soliton resolution} In the contrast, a type II blow-up solution satisfies
 \[
  \limsup_{t\rightarrow T_+} \|(u,u_t)\|_{\dot{H}^1\times L^2} < +\infty. 
 \]
 The asymptotic behaviour of type II blow-up solutions and global solutions can be described by the following soliton resolution conjecture: As time tends to the blow-up time or infinity, a solution asymptotically decomposes into a sum of decoupled solitary waves, a free wave and a small error term. Here solitary waves are Lorentz transformations of stationary solutions to (CP1). In the radial case, the solitary waves are simply ground states thus we may write the soliton resolution in the following form 
\[
 \vec{u}(t) = \sum_{j=1}^J \zeta_j (W_{\lambda_j(t)},0) + \vec{u}_L(t) + o(1), \qquad t\rightarrow T_+\;\hbox{or}\; +\infty.
\]
Here $\vec{u}=(u,u_t)$; $\zeta_j \in \{+1,-1\}$ are signs; $u_L$ is a free wave; $o(1)$ is a error term, whose norm in the energy space $\dot{H}^1\times L^2$ vanishes as $t$ tends to $T_+$ or $\infty$. The scale functions $\lambda_j(t)$ for type II blow-up solutions satisfy
\begin{align*}
 & \lim_{t\rightarrow T_+} \frac{\lambda_{j+1}(t)}{\lambda_j(t)} = 0, \quad j=1,2,\cdots,J-1;& & \lim_{t\rightarrow T_+} \frac{\lambda_1(t)}{T_+ -t} = 0. 
\end{align*}
Similarly scale functions for global solutions satisfy 
\begin{align*}
 & \lim_{t\rightarrow +\infty} \frac{\lambda_{j+1}(t)}{\lambda_j(t)} = 0, \quad j=1,2,\cdots,J-1;& & \lim_{t\rightarrow +\infty} \frac{\lambda_1(t)}{t} = 0. 
\end{align*}
Although soliton resolution conjecture is still an open problem in the non-radial case (see Duyckaerts-Jia-Kenig \cite{djknonradial} for a partial result), it has been verified in the radial case in the past fifteen years. Duyckaerts-Kenig-Merle \cite{se} gave the first proof of the 3-dimensional case via a combination of profile decomposition and channel of energy method. Then Duyckaerts-Kenig-Merle \cite{oddhigh}, Duyckaerts-Kenig-Martel-Merle \cite{soliton4d}, Collot-Duyckaerts-Kenig-Merle \cite{soliton6d} and Jendrej-Lawrie \cite{anothersoliton} verified the soliton resolution conjecture in higher dimensions for radial data.   Recently the author \cite{dynamics3d} gives another proof of this conjecture in 3D radial case and discusses some further quantitative properties of the soliton resolution. In addition, soliton resolution for co-rotational wave maps has also been verified by Jendrej-Lawrie \cite{solitonwavemap2}. For the convenience of the readers, we copy below the soliton resolution theorem in the 3D radial case given by Duyckaerts-Kenig-Merle \cite{se}.

\begin{theorem} \label{soliton resolution thm}
 Let $u$ be a radial solution of (CP1) and $T_+ = T_+(u)$ be the right endpoint of this maximal interval of existence. Then one the following holds:
 \begin{itemize}
  \item {\bf Type I blow-up}: $T_+<\infty$ and 
  \[
   \lim_{t\rightarrow T_+} \|(u(t),u_t(t))\|_{\dot{H}^1\times L^2} = +\infty. 
  \]
  \item {\bf Type II blow-up}: $T_+<\infty$ and there exist $(v_0,v_1) \in \dot{H}^1\times L^2$, an integer $J\geq 1$, and for all $j \in \{1,2,\cdots,J\}$, a sign $\zeta_j\in \{\pm 1\}$, and a positive function $\lambda_j(t)$ defined for $t$ close to $T_+$ such that 
  \begin{align*}
   \lambda_1(t) \ll \lambda_2(t) \ll \cdots \ll \lambda_J(t) \ll T_+ - t\quad \hbox{as} \quad t\rightarrow T_+; \\
   \lim_{t\rightarrow T_+} \left\|(u(t),\partial_t u(t)) - \left(v_0 + \sum_{j=1}^J \frac{\zeta_j}{\lambda_j(t)} W\left(\frac{x}{\lambda_j(t)}\right),v_1\right) \right\|_{\dot{H}^1\times L^2} =0.
  \end{align*}
  \item {\bf Global solution}: $T_+ = +\infty$ and there exist a solution $v_L$ of the linear wave equation, an integer $J \geq 0$, and for all $j \in \{1,2,\cdots,J\}$, a sign $\zeta_j \in \{\pm 1\}$, and a positive function $\lambda_j(t)$ defined for large $t$ such that 
   \begin{align*}
   \lambda_1(t) \ll \lambda_2(t) \ll \cdots \ll \lambda_J(t) \ll t\quad \hbox{as} \quad t\rightarrow +\infty; \\
   \lim_{t\rightarrow +\infty} \left\|(u(t),\partial_t u(t)) - \left(v_L(t) + \sum_{j=1}^J \frac{\zeta_j}{\lambda_j(t)} W\left(\frac{x}{\lambda_j(t)}\right),\partial_t v_L(t)\right) \right\|_{\dot{H}^1\times L^2} =0.
  \end{align*}
 \end{itemize}
\end{theorem}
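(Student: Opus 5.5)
Theorem \ref{soliton resolution thm} is the radial soliton resolution theorem of Duyckaerts--Kenig--Merle \cite{se}; the paper quotes it rather than proving it. If I were to prove it, I would follow the channel-of-energy strategy of \cite{se}, refined as in \cite{dynamics3d}.

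\textbf{Step 1: reduce to bounded solutions.} I first separate out type I blow-up by assuming
\[
\limsup_{t\to T_+}\|\vec u(t)\|_{\dot H^1\times L^2}<\infty
\]
along some sequence of times. In the radial case one then upgrades this to boundedness on the whole interval near $T_+$. That is the dichotomy in the statement.

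\textbf{Step 2: extract the radiation part.} For a bounded solution I extract $(v_0,v_1)$, or the free wave $v_L$ in the global case. In the blow-up case, finite speed of propagation shows that $\vec u(t)$ converges weakly, and the limit restricted to $|x|>T_+-t$ defines the regular part. In the global case one uses the linear profile of $\vec u(t)$ outside the light cone $|x|>t-A$. Subtracting this radiation gives a bounded remainder whose energy concentrates in $|x|\ll T_+-t$ (resp. $|x|\ll t$). Concretely, I would show the energy of the remainder vanishes in the region $|x|>\lambda(t)$ for some $\lambda(t)=o(T_+-t)$ (resp. $o(t)$), using the Morawetz/virial identity to rule out self-similar concentration.

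\textbf{Step 3: decompose along a sequence.} Along a sequence $t_n$, I apply a radial profile decomposition to the remainder. The key rigidity input is the 3D radial exterior energy estimate for the linear wave equation: for radial free waves,
\[
\sum_{\pm}\lim_{t\to\pm\infty}\int_{|x|>|t|+R}|\nabla_{t,x}v_L|^2\,dx\ \ge\ c\,\big\|\pi^\perp_R(v_0,v_1)\big\|^2_{\dot H^1\times L^2(|x|>R)} .
\]
Here $\pi_R^\perp$ projects away from the span of $(r^{-1},0)$ and $(0,r^{-1})$. Combining this with a perturbative argument, I would prove that every nonlinear profile that does not radiate energy outside the light cone in either time direction must be $\pm W_\lambda$ or $0$. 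Any dispersive profile would create an exterior channel of energy; by finite speed of propagation that channel would contradict the vanishing established in Step 2. This gives
\[
\vec u(t_n)=\text{radiation}+\sum_{j}\zeta_j(W_{\lambda_{j,n}},0)+o(1)
\]
with orthogonal scales.

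\textbf{Step 4: pass to all times (main obstacle).} I expect the upgrade from a sequence of times to continuous time to be the hardest step. Here I would use the distance function
\[
d(t)=\inf_{J,\zeta_j,\lambda_j}\Big\|\vec u(t)-\text{radiation}-\textstyle\sum_j\zeta_j(W_{\lambda_j},0)\Big\|,
\]
and argue by contradiction. If $d(t_n')\ge\epsilon$ along another sequence, then choosing intermediate times where $d$ crosses a small threshold and taking profile decompositions there, the channel-of-energy estimate near multi-soliton configurations yields a profile that radiates. That contradicts Step 2. The difficulty is the non-orthogonality of $r^{-1}$ in the exterior estimate, which must be handled via the explicit form $W\sim r^{-1}$ at infinity and a careful induction on the number of bubbles from the outermost scale inward. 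Finally, energy bounds the number $J$, and continuity of $d(t)$ lets the scales $\lambda_j(t)$ be chosen as functions of $t$ with the stated ordering.
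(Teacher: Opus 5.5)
\textbf{The paper's proof.} The paper gives no proof of this theorem; it is quoted from Duyckaerts--Kenig--Merle. Your outline follows the channel-of-energy architecture of that work, but as written it has a genuine gap in how a priori bounds enter.

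\textbf{The gap: the uniform bound is assumed, not proved.}
\begin{itemize}
\item The negation of type I blow-up is only $\liminf_{t\to T_+}\|\vec u(t)\|_{\dot H^1\times L^2}<\infty$, not a $\limsup$ along a sequence.
\item In Step 1 you assert that this upgrades to $\sup_{T_+-\delta\le t<T_+}\|\vec u(t)\|_{\dot H^1\times L^2}<\infty$, with no mechanism. This is not a preliminary reduction. The bound for all times is an \emph{output} of the all-time resolution (your Step 4), not an input to it.
\item Your Step 2 depends on that bound. A virial/Morawetz argument excluding self-similar concentration at \emph{every} time near $T_+$ needs a uniform energy bound to control its boundary and error terms.
\item Step 3 then closes its contradiction by appealing to exactly that all-time vanishing. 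So Steps 1--3 are circular.
\item The global case is not covered at all. For $T_+=+\infty$ the theorem has no unbounded alternative, so you must prove that every radial global solution is bounded at least along some sequence $t_n\to+\infty$. Your Step 2 simply starts from ``a bounded solution''.
\end{itemize}

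\textbf{How to repair it: make the bound an output.}
\begin{itemize}
\item In the blow-up case, prove the sequential decomposition only along sequences $s_n\to T_+$ on which $\vec u(s_n)$ is bounded.
\item Close the channel-of-energy contradiction with information that needs no bound at other times. For example, finite speed of propagation gives that $u-v$ vanishes for $|x|>T_+-t$, which rules out forward channels.
\item A backward channel issued at time $s_n$ would put, at a fixed earlier time $t_0$, a fixed amount of energy of $u-v$ into shells $\{T_+-t_0-o(1)<|x|<T_+-t_0\}$ of vanishing width. A single finite-energy function cannot carry that.
\item Then run your crossing-time argument with a distance $d_J(t)$ in which $J$ is fixed and the separation quantities $\lambda_j/\lambda_{j+1}$ and $\lambda_J/(T_+-t)$ are included. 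With an unconstrained infimum over $J$ and the scales, nearly cancelling pairs such as $W_\lambda-W_{\lambda'}$ make $d$ degenerate and destroy the energy bound on $J$.
\item With this choice, $\vec u$ is automatically bounded at crossing times, so the profile decomposition there is legitimate. The uniform bound then follows at the end.
\item For global solutions you still need a separate argument giving $\liminf_{t\to+\infty}\|\vec u(t)\|_{\dot H^1\times L^2}<\infty$.
\end{itemize}

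\textbf{Two smaller points.}
\begin{itemize}
\item In 3D the exterior energy identity loses only the line spanned by $(r^{-1},0)$, because $(0,r^{-1})\notin L^2(\{|x|>R\})$.
\item The rigidity statement, that a non-radiative profile is $0$ or $\pm W_\lambda$, is the core of the whole proof. It requires an exterior analysis, at every radius $R$, of the behaviour $r u_0(r)\to\ell$; it does not follow from a perturbative argument.
\end{itemize}
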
 

\paragraph{Number of bubbles} If the soliton resolution of a solution comes with $J$ solitary waves, then we call it a $J$-bubble solution. A scattering solution can be viewed as a $0$-bubble solution as time tends to infinity. 

\subsection{Main topic and result} 

Although the soliton resolution conjecture has been verified in the radial case, a natural question still remains to be answered, i.e. can we find an example of soliton resolution for each combination of bubble number and/or signs? To be more precise, given a positive integer $J$ and a sequence of signs $\zeta_1, \zeta_2, \cdots, \zeta_J$, does it exist a radial global solution (or type II blow-up solution) to (CP1), such that the following soliton resolution holds as time tends to infinity (or blow-up time)?
\[
 \vec{u}(t) \approx \sum_{j=1}^J \zeta_j (W_{\lambda_j(t)},0) + \vec{u}_L(t) + o(1). 
\]
Let us make a brief review on relevant results given in previous works. 

\paragraph{Type II blow-up solutions} The first type II blow-up solution was constructed by Krieger-Schlag-Tataru \cite{slowblowup1}, then by Krieger-Schlag \cite{slowblowup2} and Donninger-Huang-Krieger-Schlag \cite{moreexamples}. All these examples come with a single soliton, but with different choices of scale functions $\lambda_1(t)$. Please note that similar type II blow-up solutions can also be constructed in higher dimensions. Please see Hillairet-Rapha\"{e}l \cite{4dtypeII} and Jendrej \cite{5dtypeII}, for examples. 

\paragraph{Global solutions} The ground states are clearly non-scattering global solutions to (CP1). In addition, Donninger-Krieger \cite{nonscaglobal1} proved that one-bubble global solution exists with a scale function behaving like $\lambda_1(t) \simeq t^{\mu}$ for any sufficiently small parameter $|\mu| \ll 1$. 

\paragraph{Main result} In summary only one-bubble examples are previously known in the 3D radial case. In this work we prove that this is the general rule, i.e. soliton resolution with two or more bubbles does not exist at all in the 3D radial case. Now we introduce the main result of this work:

\begin{theorem} \label{thm main}
 There does not exist any radial global solution or type II blow-up solution to (CP1) with two or more bubbles in its soliton resolution. In other words, if $u$ is a radial solution defined for all time $t\geq 0$, then exactly one of the following holds 
 \begin{itemize}
  \item {\bf Scattering:} there exists a free wave $u_L$, such that 
  \[
   \lim_{t\rightarrow +\infty} \|\vec{u}(t) - \vec{u}_L(t)\|_{\dot{H}^1\times L^2} = 0. 
  \]
  \item {\bf One-bubble global solution:} there exists a finite-energy free wave $u_L$, a sign $\zeta \in \{+1,-1\}$ and a scale function $\lambda(t)>0$ such that 
  \begin{align*}
  & \lim_{t\rightarrow +\infty} \left\|\vec{u}(t) - \vec{u}_L(t) - \frac{\zeta}{\lambda(t)^{1/2}} \left(W\left(\frac{x}{\lambda(t)}\right),0\right)\right\|_{\dot{H}^1\times L^2(\Rm^3)} = 0;& &\lim_{t\rightarrow +\infty} \frac{\lambda(t)}{t} = 0.   
  \end{align*}
 \end{itemize}
 Similarly if a radial solution $u$ to (CP1) blows up at a finite time $T_+ > 0$, then exactly one of the following holds 
 \begin{itemize}
  \item {\bf Type I blow-up:} the solution $u$ blows up in the manner of type I, i.e. 
  \[
   \lim_{t\rightarrow T_+} \|\vec{u}(t)\|_{\dot{H}^1\times L^2(\Rm^3)} = +\infty. 
  \]
  \item {\bf One-bubble type II blow-up:} there exists a finite-energy free wave $u_L$, a sign $\zeta \in \{+1,-1\}$ and a scale function $\lambda(t)>0$ such that 
    \begin{align*}
  & \lim_{t\rightarrow T_+} \left\|\vec{u}(t) - \vec{u}_L(t) - \frac{\zeta}{\lambda(t)^{1/2}} \left(W\left(\frac{x}{\lambda(t)}\right),0\right)\right\|_{\dot{H}^1\times L^2(\Rm^3)} = 0;& &\lim_{t\rightarrow T_+} \frac{\lambda(t)}{T_+ -t} = 0.   
  \end{align*}
 \end{itemize}
 \end{theorem}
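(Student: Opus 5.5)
By Theorem \ref{soliton resolution thm}, it suffices to exclude $J\ge 2$ in the global and type II cases; the dichotomies then follow immediately. I argue by contradiction. Suppose $u$ is a radial solution whose resolution has $J\ge 2$ bubbles with scales $\lambda_1(t)\ll \lambda_2(t)\ll\cdots\ll\lambda_J(t)$. Only the two smallest bubbles $\lambda_1,\lambda_2$ are used.

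The plan is a channel-of-energy argument in the exterior region $r>R(t)$, where $\lambda_1(t)\ll R(t)\ll\lambda_2(t)$. The key tool is the 3D radial exterior energy estimate for the linear wave equation. For radial free waves in 3D, writing $w=rv$ reduces the problem to the 1D wave equation on the half-line. This gives that, for $t\ge 0$ or $t\le 0$,
\[
\lim_{t\to\pm\infty}\int_{|x|>R+|t|}|\nabla_{t,x}v|^2\,dx \ge \frac12\,\bigl\|\Pi^\perp_R (v_0,v_1)\bigr\|^2_{\dot H^1\times L^2(|x|>R)},
\]
where $\Pi^\perp_R$ is the projection orthogonal to the plane $\{(c/r,0)\}$. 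I would combine this with a small-data nonlinear version: if the exterior data are small, the nonlinear solution in $\{|x|>R+|t|\}$ is approximated by the linear one, with error cubic/quintic in the data.

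First step. At a time $t_n$ near the endpoint, consider $\vec u(t_n)$ restricted to $r>R_n$, with $\lambda_1(t_n)\ll R_n\ll\lambda_2(t_n)$. In this region the innermost bubble contributes only its tail, $\zeta_1\lambda_1^{1/2}/r$ at leading order. This tail lies in the exceptional direction $(c/r,0)$, and its $\dot H^1$ norm on $r>R_n$ is about $(\lambda_1/R_n)^{1/2}$, which is small. The second bubble $\zeta_2W_{\lambda_2}$ is, in the region $R_n<r\ll\lambda_2$, nearly the constant $\zeta_2\lambda_2^{-1/2}$. The corresponding Taylor correction is of order $r^2/\lambda_2^{5/2}$.

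Second step. Use a "backward/forward in time" exterior analysis, in the spirit of the author's earlier 3D paper \cite{dynamics3d}. Evolve from $t_n$ in the exterior cone $r>R_n+|t-t_n|$. The interaction between the $1/r$ tail of bubble 1 and the nonlinearity sourced by bubble 2 should produce a nonzero radiation: a non-exceptional component whose size is of order $(\lambda_1/\lambda_2)^{1/2}$ times a power of $R_n/\lambda_2$. Roughly, the term $5W_{\lambda_2}^4\cdot \zeta_1\lambda_1^{1/2}/r$ in the linearized equation has a definite sign when $\zeta_1=\zeta_2$ and the opposite sign otherwise. Either way its projection away from the exceptional plane is nonzero. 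By the exterior estimate, this radiation carries a definite amount of energy into $|x|>R_n+|t-t_n|$ as $t\to$ the endpoint or in the backward direction. I would optimize $R_n$ so that this lower bound dominates both the nonlinear errors and the error $o(1)$ in the resolution. The error $o(1)$ must be compared with the scale-invariant quantity, so I would choose the exterior region relative to $\lambda_1$ and renormalize.

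Third step. Contradict the resolution itself. In the forward direction, the energy escaping through the cone must be accounted for by the free wave $u_L$. Since the soliton resolution holds with the same $u_L$ at all later times, the exterior radiation generated at time $t_n$ must vanish as $n\to\infty$. Meanwhile the lower bound, after rescaling by $\lambda_1(t_n)$, is uniform. This is the contradiction. If the forward direction loses the sign, use the backward direction in $[t_n-\epsilon, t_n]$ together with the fact that the resolution is also approximately valid at earlier nearby times. The $\pm$ alternative in the exterior estimate guarantees that one direction works.

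The main obstacle is the second step: showing that the nonlinear interaction produces a non-exceptional exterior component quantitatively larger than all error terms. The bubble-1 tail is itself exactly exceptional and emits no radiation alone, and the coupling is weak, of order $(\lambda_1/\lambda_2)^{1/2}$, while the resolution error is only $o(1)$. My first attempt would be to work directly with the radiation profile of $u$: define for radial solutions the asymptotic exterior radiation $G(s)$ via $r u_t\approx G(r-t)$ as $t\to\infty$. I would then show that for a multi-bubble solution, $G$ restricted to suitable windows must match both a bubble-1 tail and a bubble-2 tail with incompatible exponents. This yields quantitative non-smallness independent of the $o(1)$ error.
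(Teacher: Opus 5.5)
There is a genuine gap, and it lies in how you close the argument (Step 3), not only in the step you flag as the main obstacle. Write your $\lambda_1\ll\lambda_2$ for the paper's $\lambda_J\ll\lambda_{J-1}$.

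\textbf{The contradiction in Step 3 does not occur.} The radiation you hope to create at time $t_n$ is at best of size $(\lambda_1/\lambda_2)^{1/2}$. This ratio is scale invariant and tends to $0$. Rescaling by $\lambda_1(t_n)$ therefore does not make the lower bound uniform, and it is perfectly compatible with ``the radiation emitted at $t_n$ tends to $0$''.

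Moreover, in the global case that radiation does not tend to $0$. The forward exterior energy of $u$ in $\{r>R_n+(t-t_n)\}$ converges to $8\pi\int_{R_n-t_n}^{\infty}|G_+(s)|^2{\rm d}s$, which is essentially the whole energy of $u_L$.

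Once you subtract the radiation (the paper subtracts the asymptotically equivalent free wave $v_L$), the remainder is non-radiative by definition. The real issue is then whether the bubble--bubble interaction can be offset by the interaction of $v_L$ with the small bubble. At a fixed time it can. Suppose the radiation profile carries $L^2$ mass $\epsilon$ within distance $\sim\lambda_2(t_n)$ of the light cone. Then $v_L(\cdot,t_n)\approx\epsilon\lambda_2^{-1/2}$ on $\{r\lesssim\lambda_2\}$. This is a quasi-constant of the same type as the background $\zeta_2W_{\lambda_2}\approx\sqrt{3}\zeta_2\lambda_2^{-1/2}$ seen by the small bubble, and its interaction with $W_{\lambda_1}$ is of the same order $\epsilon(\lambda_1/\lambda_2)^{1/2}$. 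Nothing in your scheme controls this term.

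\textbf{The missing idea.} The fixed-time analysis can only yield a dichotomy, namely Proposition \ref{prop concentration}: $\tau\geq\tau_3(J)$ for every $J$-bubble exterior solution. Apply it to $u(\cdot,\cdot+t)$, whose other components of $\tau$ tend to $0$. It then says the radiation concentrates within $\lambda_{J-1}(t)$ of the light cone at every late time, i.e.\ $g_+(-t)+g_-(t)\gtrsim\lambda_{J-1}(t)^{-1}$ for the one-sided maximal functions of $|G_\pm|^2$.

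The contradiction then comes from averaging in time. The weak-$(1,1)$ maximal inequality gives $|\{t\geq T:\lambda_{J-1}(t)<\eta\}|\lesssim\eta$. This is incompatible with $\lambda_{J-1}(t)=o(t)$ (resp.\ $o(T_+-t)$).

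\textbf{Step 2 is also not executable as described.} For $\lambda_1\ll R_n\ll\lambda_2$, the data on $\{r>R_n\}$ contain the full bubbles $W_{\lambda_2},\dots,W_{\lambda_J}$, so the small-data exterior theory you invoke does not apply. One must linearize around the multi-bubble profile. The global $o(1)$ resolution error is useless here, since it may be much larger than $(\lambda_1/\lambda_2)^{1/2}$.

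The paper handles this with three ingredients:
\begin{itemize}
\item the quantitative resolution of Proposition \ref{main tool};
\item the corrector $\sqrt{3}\zeta_{J-1}\lambda_{J-1}^{-1/2}\varphi(x/\lambda_J)$, where $-\Delta\varphi=5W^4\varphi+5W^4$;
\item a dyadic-channel bootstrap based on refined Strichartz estimates (Lemma \ref{key connection}, Corollary \ref{key connection complete}), showing the error localized near the small bubble is $\lesssim\tau(\lambda_J/\lambda_{J-1})^{1/2}$, i.e.\ smaller than the interaction by a factor $\tau$.
\end{itemize}

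The obstruction is then read off inside the small bubble, at $r=c_4\lambda_J$. Since $r\varphi(r)\approx\mu_0\neq0$ near the origin, the corrector is too large there to be compatible with the inner bound of Lemma \ref{J bubble upper bound inner} unless $\tau\gtrsim1$. Your exterior region $r>R_n\gg\lambda_1$ discards exactly this region. You also offer no substitute argument that the tail $\zeta_1\lambda_1^{1/2}/r$ generates a definite non-exceptional component dominating all errors.
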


Please note that we substitute $(v_0,v_1) \in \dot{H}^1\times L^2$ (as given in Theorem \ref{soliton resolution thm}) by a linear free wave $u_L$ here in the type II blow-up case, for the reason of consistence. It clearly does not make any difference since $\vec{u}_L(t) \rightarrow \vec{u}_L(T_+)$ in the energy space $\dot{H}^1\times L^2$ as $t\rightarrow T_+$. 

\begin{remark}
 Examples of all four cases in Theorem \ref{thm main} are previously known to exist. As a result, Theorem \ref{thm main} finally gives a complete classification of the asymptotic behaviour of any radial solution to (CP1). This is the first complete classification result in the area of soliton resolution for the focusing, energy-critical wave equation, as far as the author knows. 
\end{remark}
  
\begin{remark}
 Multiple bubble solutions to (CP1) do exist in non-radial case. Indeed, if the radially symmetric assumption is removed, then one may construct a type II blow-up solution with any number of solitary waves by combining several type II blow-up solutions with different blow-up points in the space but the same blow-up time, thanks to the finite speed of wave propagation. Furthermore, type II blow-up solution with multiple bubbles shrinking to a single blow-up point but along different directions has also been constructed recently by Kadar \cite{difflines}. In 5-dimensional case, non-radial global solutions with two or more bubbles have also been constructed by Martel-Merle \cite{5dsolitonexample1, 5dinelasticity} and Yuan \cite{5dsolitonexample2}. 
\end{remark} 

\begin{remark} 
 Radial multiple bubble solutions still exist if we consider the energy critical wave equation $\square u = |u|^\frac{4}{d-2}$ in a high-dimensional space $\Rm^d$. For example, two-bubble radial solutions have been constructed by Jendrej \cite{twobubble6d} for $d=6$. However, the author conjectures that given a dimension $d \geq 3$, there exists a positive integer $N= N(d)$, such that the soliton resolution of radial solutions to the energy-critical wave equation $\square u = |u|^\frac{4}{d-2}$ can never come with more than $N$ solitons. Our main theorem verifies that $N(3) = 1$. 
\end{remark}

\begin{remark}
 If we consider a special case with zero radiation $u_L = 0$ in the soliton resolution, then it has been proved by Jendrej \cite{nonexistence2bubble} that solutions with two bubbles of different signs do not exist in the radial case for any dimension $d\geq 3$. 
\end{remark}

\begin{remark}
 Recently soliton resolution with any number of bubbles was constructed for the co-rotational wave maps by Krieger-Palacios \cite{wavemaptower1} and Hwang-Kim \cite{wavemaptower2}. Please note that the bubbles come with alternative signs in the soliton resolution given by both these two works. 
\end{remark}

\subsection{General idea} 

Now we describe the general idea of this work. Generally speaking, we investigate the relationship between the bubbles and radiation of a solution. This idea dates back to Duyckaerts-Kenig-Merle's proof of the soliton resolution conjecture, and the channel of energy method they deployed. In fact, their proof given in \cite{se} utilized an important fact that any radial solution of (CP1) other than zero or ground states must come with nonzero radiation outside the main light cone $|x|=|t|$, i.e. 
\[
 \sum_{\pm} \lim_{t\rightarrow \pm \infty} \int_{|x|>|t|} |\nabla_{t,x} u(x,t)|^2 {\rm d} x > 0. 
\]
Here $\nabla_{t,x} = (\partial_t, \nabla_x)$. 

\paragraph{Radiation fields} The theory of radiation fields may help us further investigate the asymptotic behaviour of solutions to the wave equations as time tends to infinity. The classic theory of radiation fields applies to the free waves. For simplicity we focus on the 3D radially symmetric case. Given any finite-energy radial free wave $u$, there exist two functions $G_\pm \in L^2(\Rm)$ such that 
\begin{align*}
 \lim_{t\rightarrow \pm \infty} \int_0^\infty \left|r u_t(r,t) - G_\pm(r\mp t)\right|^2 {\rm d} r = 0; \\
 \lim_{t\rightarrow \pm \infty} \int_0^\infty \left|r u_r(r,t) \pm G_\pm(r\mp t)\right|^2 {\rm d} r = 0.
\end{align*}
This gives good approximation of the gradient $(u_t, \nabla u)$ in the energy space. The author calls these functions $G_\pm$ the radiation profiles. In many situations, the asymptotic behaviour of a solution to the nonlinear wave equation is similar to that of a free wave, either in the whole space for a given time direction, or outside some light cone. As a result, similar limits to the ones given above hold for suitable radiation profiles $G_\pm$. In other words, we may also describe the asymptotic behaviour of a nonlinear solution by specifying its corresponding radiation profiles. In particular, the radiation strength of a suitable solution $u$ in the ``energy channel'' $\{{x,t}: |t|+r_1<|x|<|t|+r_2\}$ can be measured by the limits 
\[
  \lim_{t\rightarrow \pm \infty} \int_{|t|+r_1<|x|<|t|+r_2} |\nabla_{t,x} u(x,t)|^2 {\rm d} x 
\]
or equivalently, the integrals 
\[
 \int_{r_1}^{r_2} |G_\pm(s)|^2 {\rm d} s. 
\]

\paragraph{Radiation concentration} Let us assume that the following soliton resolution holds ($J\geq 2$)
\[
 \vec{u}(t) \approx \sum_{j=1}^J \zeta_j \left(W_{\lambda_j(t)}, 0\right) + \vec{u}_L(t) + o(1). 
\]
We focus on the interaction of the smallest two bubbles $\zeta_J W_{\lambda_J (t)}$ and $\zeta_{J-1} W_{\lambda_{J-1}(t)}$, and show that a significant radiation concentration has to happen for at least one of the radiation profiles $G_\pm$ associated to the solution $u$. More precisely, we have
\[
 \sup_{0<r<\lambda_{J-1}(t)} \frac{\lambda_{J-1}(t)}{r} \int_0^r \left(|G_+(s-t)|^2 + |G_-(s+t)|^2 \right) {\rm d} s > \kappa;
\]
as long as $t$ is sufficiently large (or sufficiently close to $T_+$). Here $\kappa > 0$ is a constant determined solely by the bubble number $J$. This, together with the classic theory of maximal functions, gives a contradiction. Intuitively strong concentration can not always happen as we make $t \rightarrow +\infty$ (or $t\rightarrow T_+$ in the type II blow-up case).

\paragraph{Bubble interaction with no dispersion} Now let us briefly explain why a strong radiation concentration has to happen as described above. Indeed, if $u$ were a $J$-bubble solution with almost no radiation in the channel $\Psi = \{(x,t): |t|<|x|<|t|+\lambda_{J-1}(t)\}$, then we might linearize the wave equation (CP1) near the approximated solution (given by the soliton resolution)
\[
 S_\ast = \sum_{j=1}^J \zeta_j W_{\lambda_j} (x) + v_L,
\]
where $v_L$ is the asymptotically equivalent free wave of $u$ outside the main light cone, as defined in Subsection \ref{sec: asymptotically equivalent solutions}; and deduce that the error $w_\ast = u - S_\ast$ satisfies the wave equation 
\begin{align*}
 \square w_\ast = F(u) - \sum_{j=1}^J \zeta_j F(W_{\lambda_j}) & = 5  W_{\lambda_J}^4 \left(w_\ast + \zeta_{J-1} W_{\lambda_{J-1}}\right) + \hbox{lower order terms} \\
 & = 5 W_{\lambda_J}^4 \left(w_\ast + \sqrt{3} \zeta_{J-1} \lambda_{J-1}^{-1/2}\right) + \hbox{lower order terms}. 
\end{align*}
Neglecting the lower order terms and solving the wave equation
\begin{equation} \label{approx wave equation intro}
 \square w_\ast = 5 W_{\lambda_J}^4 \left(w_\ast + \sqrt{3} \zeta_{J-1} \lambda_{J-1}^{-1/2}\right),
\end{equation}
we may give a more precise approximation 
\[
 u \approx \sum_{j=1}^J \zeta_j W_{\lambda_j} (x) + v_L + \sqrt{3} \zeta_{J-1} \lambda_{J-1}^{-1/2} \varphi(x/\lambda_J). 
\]
Here $\varphi(x)$ is a well-chosen solution to the linear elliptic equation 
\[ 
 -\Delta \varphi = 5 W^4 \varphi + 5 W^4, 
\]
thus $\sqrt{3} \zeta_{J-1} \lambda_{J-1}^{-1/2} \varphi(x/\lambda_J)$ is exactly a solution of \eqref{approx wave equation intro}. Please note that we make $\varphi$ independent of the time because $\varphi$ is expected to send no radiation. This precise approximation finally gives a contradiction if we consider the behaviour of $u$ near the origin because we may prove that $\varphi(x)$ comes with a strong singularity near the origin.  

\paragraph{Major tools} This work utilize two major tools. The first one is a family of estimates regarding the soliton resolution in term of the Strichartz norm of the asymptotically equivalent free wave $v_L$. Roughly speaking, the following estimates hold for suitable $J$-bubble solution $u$ defined in the exterior region $\Omega_0 = \{(x,t): |x|>|t|\}$
\begin{align*} 
  \left\|\vec{u}(0) -\sum_{j=1}^J \zeta_j\left(W_{\lambda_j},0\right) - \vec{v}_L(0)\right\|_{\dot{H}^1 \times L^2} & \lesssim_J \|\chi_0 v_L\|_{L^5 L^{10}(\Rm \times \Rm^3)};\\
  \frac{\lambda_{j+1}}{\lambda_j} & \lesssim_j \|\chi_0 v_L\|_{L^5 L^{10}(\Rm \times \Rm^3)}, \quad j=1,2,\cdots, J-1. 
\end{align*}
Here $\chi_0$ is the characteristic function of $\Omega_0$; $\zeta_j$ and $\lambda_j$ are signs/scales in the soliton resolution. More details can be found in Section 3. 

Another important tool is a family of refined Strichartz estimates for free waves whose radiation profiles/initial data are supported away from the origin, which are given in Subsection \ref{sec: refined Strichartz}. These refined Strichartz estimates imply that the influence of larger bubbles and the radiation part can be neglected when we consider the interaction of two smallest bubbles in suitable situations. 

\subsection{Structure of this work} 
This work is organized as follows: In Section 2 we first introduce some notations, basic conceptions and preliminary results, including the exterior solutions, radiation fields and theory, asymptotically equivalent solutions, as well as several refined Strichartz estimates. We then make a review of the soliton resolution estimates given by \cite{dynamics3d} in Section 3. Next we discuss the linear elliptic equation mentioned above in Section 4 and give a few approximations of $J$-bubble exterior solutions with fairly weak radiation concentration in Section 5, if this kind of solution existed. Finally in Section 6 we prove the radiation concentration property of $J$-bubble solutions and finish the proof of the main theorem. 

\section{Preliminary results} 

In this section we make a brief review of several previously known theories and results, which will be used in subsequent sections. Let us start by a few notations. 

\paragraph{Implicit constants} In this article we use the notation $A \lesssim B$ if there exists a constant $c$ such that the inequality $A \leq c B$ holds. In addition, we may add subscript(s) to indicate that the implicit constant $c$ mentioned above depends on the subscript(s) but nothing else. In particular, the notation $\lesssim_1$ implies that the constant $c$ is actually an absolute constant. The notations $\gtrsim$ and $\simeq$ can be understood in the same way. Similarly we use the notation $c(\cdot)$, where the dot represents one or more parameter(s), to represent a positive constant determined by the parameter(s) listed but nothing else. In particular, $c(1)$ means an absolute constant. Please note that the same notation $c(\cdot)$ may represent different constants at different places, even if the parameters are exactly the same.  

\paragraph{Box notation} For convenience we use the notation $\square = \partial_t^2 - \Delta$ when necessary in this work.  

\paragraph{Nonlinearity and radial functions} We use the notation $F(u) = |u|^4 u$ throughout this work, unless specified otherwise. If $u$ is a radial solution, then we use the notation $u(r,t)$ to represent the value $u(x,t)$ with $|x|=r$. 

\paragraph{Space norms} We need to consider the restriction of radial $\dot{H}^1$ functions outside a ball of radius $R>0$ when we discuss the conception of exterior solutions. For convenience we let $\mathcal{H}(R)$ be the space of the restrictions of radial functions $(u_0,u_1) \in \dot{H}^1\times L^2$ to the region $\{x: |x|>R\}$, equipped with the norm
\[
 \|(u_0,u_1)\|_{\mathcal{H}(R)} = \left(\int_{|x|>R} \left(|\nabla u_0(x)|^2 + |u_1(x)|^2\right){\rm d} x\right)^{1/2}.
\]
In particular, $\mathcal{H} = \mathcal{H}(0)$ is exactly the Hilbert space of radial $\dot{H}^1\times L^2$ functions.  

\paragraph{Channel-like regions} Throughout this work we use the following notations for the channel-like regions
\begin{align*}
 \Omega_R & = \left\{(x,t)\in \Rm^3 \times \Rm: |x|>|t|+R\right\}, & & R \geq 0; \\
 \Omega_{R_1,r_2} & = \left\{(x,t)\in \Rm^3 \times \Rm: |t|+R_1<|x|<|t|+R_2\right\}, & & 0\leq R_1 < R_2;
\end{align*}
and let $\chi_{R}$, $\chi_{R_1,R_2}$ be their corresponding characteristic functions defined in $\Rm^3 \times \Rm$. In addition, if $\Psi \subset \Rm^3\times \Rm$, then the notation $\chi_{\Psi}$ represents the characteristic function of $\Psi$. 

\paragraph{Strichartz norms} We define $Y$ norm to be the $L^5 L^{10}$ Strichartz norm. For example, if $J$ is a time interval, then 
\[
 \|u\|_{Y(J)} = \|u\|_{L^5 L^{10}(J \times \Rm^3)} = \left(\int_J \left(\int_{\Rm^3} |u(x,t)|^{10} {\rm d} x\right)^{1/2} {\rm d} t\right)^{1/5}. 
\]
We may also combine $Y$ norm with the characteristic function $\chi_{R}$ defined above and write 
\[
 \|\chi_R u\|_{Y(J)} = \left(\int_J \left(\int_{|x|>|t|+R} |u(x,t)|^{10} {\rm d} x\right)^{1/2} {\rm d} t\right)^{1/5}.
\]
Please note that $\|\chi_R u\|_{Y(J)}$ is meaningful even if $u$ is only defined in the exterior region $\Omega_R$ but not necessarily defined in the whole space-time $\Rm^3 \times \Rm$. 

\subsection{Exterior solutions}

In order to focus on the radiation property of $u$ in some exterior region $\Omega_R$, and to avoid (possibly) complicated behaviour of solutions inside the light cone $|x|=|t|+R$, we shall adopt the conception of exterior solutions given by Duyckaerts-Kenig-Merle \cite{oddtool}. We start by discussing exterior solutions to the linear wave equations. Let $u, F$ be functions both defined in the region 
\[
 \Omega =  \{(x,t): |x|>|t|+R,\, t\in (-T_1,T_2)\}\subseteq \Omega_R, \qquad T_1,T_2 \in \Rm^+\cup\{+\infty\}. 
\]
We define the exterior solution $u$ to the following linear wave equation 
\[
  \left\{\begin{array}{l} \partial_t^2 u - \Delta u = F(x,t), \qquad (x,t)\in \Omega;\\ (u,u_t)|_{t=0} = (u_0,u_1) \in \mathcal{H}; \end{array}\right.
\]
where $F$ satisfies $\|\chi_R F\|_{L^1 L^2(J \times \Rm^3)} < +\infty$ for any bounded closed time interval $J \subset (-T_1,T_2)$, by
 \begin{equation} \label{def of exterior sol}
  u = \mathbf{S}_L (u_0,u_1) + \int_0^t \frac{\sin (t-t')\sqrt{-\Delta}}{\sqrt{-\Delta}} [\chi_R(\cdot,t') F(\cdot,t')] {\rm d} t', \qquad (x,t) \in \Omega.
 \end{equation}
Here $\mathbf{S}_L(u_0,u_1)$ is the linear propagation of initial data $(u_0,u_1)$, i.e. the solution to the homogeneous linear wave equation with initial data $(u_0,u_1)$. In other words, $u$ is exactly the restriction of the solution $\tilde{u}$, which solves the following classic linear wave equation, to the exterior region $\Omega$
\[
 \left\{\begin{array}{l} \partial_t^2 \tilde{u} - \Delta \tilde{u} = \chi_R(x,t) F(x,t), \qquad (x,t)\in \Rm \times (-T_1,T_2);\\ (\tilde{u},\tilde{u}_t)|_{t=0} = (u_0,u_1) \in \mathcal{H}.\end{array}\right.
\]
Please note that the finite speed of wave propagation implies that the values of initial data in the ball $\{x: |x|\leq R\}$ are actually irrelevant, thus it suffices to specify the initial data $(u_0,u_1)$ in the space $\mathcal{H}(R)$. We may define an exterior solution $u$ to nonlinear wave equations in a similar way. For instance, we say that a function $u$ defined in $\Omega$ is a solution to
 \[
  \left\{\begin{array}{l} \partial_t^2 u - \Delta u = F(u), \qquad (x,t)\in \Omega;\\ (u,u_t)|_{t=0} = (u_0,u_1) \end{array}\right.
 \]
if and only if the inequality $\|\chi_R u\|_{Y(J)} < +\infty$ holds for any bounded closed time interval $J \subset (-T_1,T_2)$, which also implies that $\|\chi_R F(u)\|_{L^1L^2(J\times \Rm^3)} < +\infty$, and the identity \eqref{def of exterior sol} holds with $F(x,t) = F(u(x,t))$.  
 
\paragraph{Local theory} A combination of the Strichartz estimates (see \cite{strichartz} for example) 
\[
 \sup_{t} \|\vec{u}(t)\|_{\mathcal{H}} + \|u\|_{L^5 L^{10}} \lesssim_1 \|\vec{u}(0)\|_{\mathcal{H}} + \|\square u\|_{L^1 L^2},
\]
finite speed of wave propagation and a fixed-point argument immediately leads to the local well-posedness theory, small data scattering theory and perturbation theory(continuous dependence of solutions on the initial data) of exterior solutions. The argument is almost the same as the corresponding argument in the whole space $\Rm^3$. More details of this argument in the whole space can be found in \cite{loc1, ls} for local well-posedness and in \cite{kenig, shen2} for perturbation theory.  

\subsection{Radiation fields} \label{sec: radiation fields}

The theory of radiation fields plays an important role in the discussion of the asymptotic behaviour of solutions to wave equations. It dates back to Friedlander's works \cite{radiation1, radiation2} more than half a century ago. The following version of statement comes from Duyckaerts-Kenig-Merle \cite{dkm3}.

\begin{theorem}[Radiation field] \label{radiation}
Assume that $d\geq 3$ and let $u$ be a solution to the free wave equation $\partial_t^2 u - \Delta u = 0$ with initial data $(u_0,u_1) \in \dot{H}^1 \times L^2(\Rm^d)$. Then
\[
 \lim_{t\rightarrow \pm \infty} \int_{\Rm^d} \left(|\nabla u(x,t)|^2 - |u_r(x,t)|^2 + \frac{|u(x,t)|^2}{|x|^2}\right) {\rm d}x = 0
\]
 and there exist two functions $G_\pm \in L^2(\Rm \times \mathbb{S}^{d-1})$ such that
\begin{align*}
 \lim_{t\rightarrow \pm\infty} \int_0^\infty \int_{\mathbb{S}^{d-1}} \left|r^{\frac{d-1}{2}} \partial_t u(r\theta, t) - G_\pm (r\mp t, \theta)\right|^2 {\rm d}\theta {\rm d}r &= 0;\\
 \lim_{t\rightarrow \pm\infty} \int_0^\infty \int_{\mathbb{S}^{d-1}} \left|r^{\frac{d-1}{2}} \partial_r u(r\theta, t) \pm G_\pm (r\mp t, \theta)\right|^2 {\rm d}\theta {\rm d} r & = 0.
\end{align*}
In addition, the maps $(u_0,u_1) \rightarrow \sqrt{2} G_\pm$ are bijective isometries from $\dot{H}^1 \times L^2(\Rm^d)$ to $L^2 (\Rm \times \mathbb{S}^{d-1})$. 
\end{theorem}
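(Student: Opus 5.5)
The plan is to prove the statement first for a dense class of nice data by an explicit Fourier/stationary-phase computation, and then extend it to all of $\dot{H}^1\times L^2(\Rm^d)$ by a density argument built on energy conservation. By time reversal $t\mapsto -t$, it suffices to treat $t\rightarrow +\infty$ and $G_+$.

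\emph{Step 1 (dense class and explicit formula).} Let $\mathcal{D}$ be the set of data $(u_0,u_1)$ whose Fourier transforms are smooth and compactly supported in an annulus $\{a<|\xi|<b\}$. The set $\mathcal{D}$ is dense in $\dot{H}^1\times L^2$. Write
\[
u(t)=e^{it|D|}f_+ + e^{-it|D|}f_-, \qquad \hat f_\pm=\tfrac12\bigl(\hat u_0 \mp i|\xi|^{-1}\hat u_1\bigr).
\]
In polar coordinates $\xi=\rho\omega$, stationary phase in $\omega$ for $x=r\theta$ with $r\simeq t\rightarrow\infty$ gives the critical points $\omega=\pm\theta$. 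Only the combinations in which the phase $\rho(\pm r\mp t)$ becomes stationary-in-$\rho$-free (i.e.\ depends on $r-t$) survive in $L^2({\rm d}r\,{\rm d}\theta)$. This yields the candidate profile
\[
G_+(s,\theta)=c_d\int_0^\infty e^{i\rho s}\rho^{\frac{d-1}{2}}\Bigl(e^{i\gamma_d}\,\rho\,\hat f_+(-\rho\theta)+e^{-i\gamma_d}\,\rho\,\hat f_-(\rho\theta)\Bigr)\,{\rm d}\rho + \text{c.c.-type term},
\]
where $c_d$ and the phase $\gamma_d$ come from stationary phase. For $\mathcal{D}$-data, the stationary-phase remainder is $O\bigl(t^{-\frac{d-1}{2}-1}\bigr)$ uniformly on $|r-t|\leq t/2$. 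Moreover, $u$ decays rapidly in $\bigl||x|-t\bigr|/(1+t)$ away from the cone, because the Fourier support stays away from the origin, so there is nonstationary phase in $\rho$. Together these give both limits in the theorem for $\mathcal{D}$-data. The same pointwise bounds show that the angular gradient $|\nabla u|^2-|u_r|^2$ and the term $|u|^2/|x|^2$ each gain a factor $t^{-2}$ relative to $|\nabla_{t,x}u|^2$ on the cone region and are negligible elsewhere. Hence the first limit (vanishing of non-radial and zero-order energy) holds on $\mathcal{D}$.

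\emph{Step 2 (isometry).} For $\mathcal{D}$-data, compute $\|G_+\|_{L^2(\Rm\times\mathbb{S}^{d-1})}$ by Plancherel in $s$ and then integrate in $\theta$. The map $(\rho,\theta)\mapsto \rho\theta$ (and $-\rho\theta$), with Jacobian $\rho^{d-1}$, turns this into
\[
\|G_+\|^2 = C\bigl(\| |\xi|\hat f_+\|_{L^2}^2+\||\xi|\hat f_-\|_{L^2}^2\bigr)=\tfrac12\bigl(\|\nabla u_0\|^2+\|u_1\|^2\bigr),
\]
once the constants are checked. An independent check of the factor is equipartition: energy conservation gives $\|\nabla u(t)\|^2+\|u_t(t)\|^2=\|(u_0,u_1)\|^2$. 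By the first limit and the two radiation limits, the left side converges to $\int\!\!\int |G_+|^2+|G_+|^2=2\|G_+\|^2$, so $\sqrt2\,G_+$ is an isometry on $\mathcal{D}$. The map is linear, so it extends uniquely to an isometry on $\dot{H}^1\times L^2$.

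\emph{Step 3 (density passage for the limits).} For general data, choose $\mathcal{D}$-data $v$ with $\|(u_0,u_1)-\vec v(0)\|<\epsilon$. Energy conservation keeps $\|\vec u(t)-\vec v(t)\|<\epsilon$ for all $t$, and the isometry gives $\|G_+^u-G_+^v\|<\epsilon/\sqrt2$. Both limits and the first identity then hold up to $O(\epsilon)$ in the limsup, so they hold exactly.

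\emph{Step 4 (surjectivity).} The image of an isometry is closed, so it suffices to show density. The formula in Step 1 can be inverted: given $G$ with $\hat G(\sigma,\theta)$ smooth and compactly supported in $\sigma\in(a,b)\cup(-b,-a)$, define $\hat f_\pm$ on the annulus by reading off the positive and negative frequency parts at $\theta$ and $-\theta$. This produces $\mathcal{D}$-data with profile exactly $G$, and such $G$ are dense in $L^2(\Rm\times\mathbb{S}^{d-1})$.

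The main obstacle is Step 1: bookkeeping the stationary phase uniformly and extracting the exact constants and phases. In even $d$, the factor $\rho^{(d-1)/2}$ is a half-integer power, so $G_+$ is a nonlocal (Hilbert-transform-type) expression in the Radon transform rather than a derivative. Relatedly, one must track which of the two critical points $\omega=\pm\theta$ feeds each half-wave, and avoid losing the cross terms. I would first do the computation on $L^2$ via Plancherel as above, rather than pointwise, to pin down the constants. Only then would I use pointwise decay, for the error estimates.
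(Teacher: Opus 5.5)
\textbf{Verdict.} Your outline is a correct, self-contained proof. The paper gives no proof of this theorem to compare it with: it quotes the result from Duyckaerts--Kenig--Merle, going back to Friedlander, and only uses the $d=3$ radial case.

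\textbf{What works.} The class $\mathcal{D}$ is dense in $\dot H^1\times L^2$. On $|r-t|\le t/2$ the stationary-phase remainder is $O(t^{-(d+1)/2})$ uniformly, so after multiplying by $r^{(d-1)/2}$ it contributes $O(t^{-1/2})$ in $L^2({\rm d}r\,{\rm d}\theta)$. On $\{||x|-t|\ge t/2\}$, integration by parts in $\xi$ gives $O((1+t+|x|)^{-N})$, which is all you need there. Your equipartition argument in Step 2 gives the factor $\sqrt2$ exactly from energy conservation. So you never need the Plancherel constants $c_d,\gamma_d$. The explicit formula matters only in Step 4, and there only through the non-vanishing of the constants and the weight $\rho^{(d+1)/2}$.

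\textbf{Comparison with the classical route.} The standard argument (Friedlander, or the Lax--Phillips translation representation) works with $C_0^\infty$ data. It reads off $G_\pm$ as an $s$-derivative of the Radon transform of the data; this derivative is fractional (Hilbert-transform type) when $d$ is even. Isometry then comes from Plancherel for the Radon transform, and surjectivity from Radon inversion. Your argument is the Fourier-side counterpart, via the Fourier slice theorem. It trades explicit Radon inversion for stationary-phase bookkeeping, in exchange for clean pointwise decay and a clean density step. For the case the paper actually uses ($d=3$, radial), neither is needed. There $ru$ solves the 1D wave equation on the half-line with a Dirichlet condition at $r=0$. d'Alembert's formula gives $u(r,t)=\frac1r\int_{t-r}^{t+r}G_-(s)\,{\rm d}s$ directly, and the limits and the isometry follow by inspection.

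\textbf{Details to fix.} None of these is a gap.
\begin{itemize}
\item In your formula for $G_+$, the half-wave $e^{it|D|}f_+$ contributes through the critical point $\omega=-\theta$ with phase $\rho(t-r)=-\rho s$. That term therefore carries $e^{-i\rho s}$, not $e^{i\rho s}$.
\item Drop the extra ``c.c.-type term''. For real data $\hat f_-(\xi)=\overline{\hat f_+(-\xi)}$, so your two displayed terms are already conjugates of each other. This is also why your inversion in Step 4 produces real data from real $G$.
\item In Step 3, the term $\int |u|^2/|x|^2$ is controlled by the energy only through Hardy's inequality $\|u/|x|\|_{L^2}\le \frac{2}{d-2}\|\nabla u\|_{L^2}$. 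This is where $d\ge 3$ enters, and you should say so.
\item The claim that angular derivatives gain a factor $t^{-1}$ is most cleanly justified with the rotation fields $\Omega_{ij}=x_i\partial_j-x_j\partial_i$. They commute with $\partial_t^2-\Delta$ and preserve $\mathcal{D}$. Hence $|\nabla u|^2-|u_r|^2=r^{-2}\sum_{i<j}|\Omega_{ij}u|^2=O(t^{-(d+1)})$ on $|r-t|\le t/2$, by the dispersive bound $t^{-(d-1)/2}$ applied to each $\Omega_{ij}u$. Together with $|u|/|x|=O(t^{-(d+1)/2})$ there, this gives the first limit with rate $O(t^{-1})$.
\end{itemize}
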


In this work the author calls the functions $G_\pm$ the radiation profiles of the free wave $u$. In addition, the radiation profiles of $(u_0,u_1)\in \dot{H}^1\times L^2$ are defined to be the radiation profiles of the corresponding free wave with initial data $(u_0,u_1)$. It is not difficult to see that $u$ is radial if and only if the radiation profile is independent of the angle $\theta$. In this work we frequently utilize the following explicit formula in the 3D radial case, which give the free wave $u$ and the radiation profile $G_+$ in the positive time direction in term of the radiation profile $G_-$ in the negative time direction. 
\begin{align}
 & u(r,t) = \frac{1}{r} \int_{t-r}^{t+r} G_-(s) {\rm d}s; & & G_+(s) = - G_-(-s),\quad s\in \Rm. \label{basic identity radiation free wave}
\end{align}
Similar formula for other dimensions and non-radial case can be found in \cite{newradiation, shenradiation}. The symmetry between $G_\pm$ given above also implies that an arbitrary combination of $G_\pm \in L^2(\Rm^+)$ uniquely determine a radial free wave. We may also write initial data $(u_0,u_1)$ in term of the initial data $(u_0,u_1)$
\begin{align} \label{initial data by radiation profile} 
 &u_0(r) =  \frac{1}{r} \int_{-r}^{r} G_-(s) {\rm d}s; & & u_1(r) = \frac{G_-(r)-G_-(-r)}{r}.
\end{align}
An integration by parts then gives us another useful formula 
\begin{equation} \label{radiation residue identity}
  \|(u_0,u_1)\|_{\mathcal{H}(R)}^2 = 8\pi \|G_-\|_{L^2(\{s: |s|>R\})}^2 + 4\pi R |u_0(R)|^2. 
\end{equation} 
Next we consider the radiation profile of initial data $(v_0,v_1) \in \mathcal{H}(R)$ for some radius $R>0$. It is natural to define its radiation profile $G_\pm$ in the following way: we pick up radial initial data $(u_0,u_1)\in \dot{H}^1\times L^2$ such that the restriction of $(u_0,u_1)$ in the exterior region $\{x: |x|>R\}$ is exactly $(v_0,v_1)$ and define $G_\pm$ to be the corresponding radiation profile of $(u_0,u_1)$. Although the choice of $(u_0,u_1)$ is NOT unique, we may uniquely determine the value of $G_\pm(s)$ for $|s| > R$, by the finite speed of propagation, as well as the value of 
\[
 \int_{-R}^R G_-(s) {\rm d} s, 
\]
by the explicit formula \eqref{initial data by radiation profile}. Conversely, if two radiation profiles $G_-(s)$ and $\tilde{G}_-(s)$ satisfy
\begin{align*}
 &G_-(s) = \tilde{G}_-(s), \quad |s|>R;& & \int_{-R}^R G_-(s) {\rm d} s = \int_{-R}^R \tilde{G}_-(s) {\rm d} s;
\end{align*}
then the corresponding free wave coincide in the exterior region $\Omega_R$. In summary, the map from initial data $(u_0,u_1)\in \mathcal{H}(R)$ to the corresponding radiation profiles
\[
 (u_0,u_1)\longrightarrow \left(\sqrt{8\pi}G_\pm (s), \frac{2\sqrt{\pi}}{R^{1/2}}\int_{-R}^R G_\pm(s) {\rm d} s\right)
\]
 is an isometric homeomorphism from $\mathcal{H}(R)$ to $L^2(\{s: |s|>R\}) \oplus \Rm$. The isometric property follows from the identities \eqref{initial data by radiation profile} and \eqref{radiation residue identity}.

Finally we may also consider radiation fields and profiles for suitable solutions to inhomogeneous/nonlinear wave equations. 

\begin{lemma} [Radiation fields of inhomogeneous equation] \label{scatter profile of nonlinear solution}
 Assume that $R\geq 0$. Let $u$ be a radial exterior solution to the wave equation
 \[
  \left\{\begin{array}{ll} \partial_t^2 u - \Delta u = F(t,x); & (x,t)\in \Omega_R; \\
  (u,u_t)|_{t=0} = (u_0,u_1) \in \dot{H}^1\times L^2. & \end{array} \right.
 \]
 If $F$ satisfies $\|\chi_R F\|_{L^1 L^2(\Rm\times \Rm^3)}< +\infty$, then there exist unique radiation profiles $G_\pm \in L^2([R,+\infty))$ such that 
 \begin{align*}
  \lim_{t\rightarrow +\infty} \int_{R+t}^\infty \left(\left|G_+(r-t) - r u_t (r, t)\right|^2 + \left|G_+(r-t) + r u_r (r, t)\right|^2\right) {\rm d}r & = 0; \\
  \lim_{t\rightarrow -\infty} \int_{R-t}^\infty \left(\left|G_-(r+t) - r u_t(r,t)\right|^2 +  \left|G_-(r+t) - r u_r(r,t)\right|^2\right) {\rm d} r & = 0. 
 \end{align*}
 In addition, the following estimates hold for $G_\pm$ given above and the corresponding radiation profiles $G_{0,\pm}$ of the initial data $(u_0,u_1)$:
 \begin{align*}
 4\sqrt{\pi} \|G_- - G_{0,-}\|_{L^2([R,R'])} & \leq \|\chi_{R,R'} F\|_{L^1 L^2((-\infty,0]\times \Rm^3)}, & & R'>R; \\
  4\sqrt{\pi}  \|G_+ - G_{0,+}\|_{L^2([R,R'])} & \leq \|\chi_{R,R'} F\|_{L^1 L^2([0,+\infty)\times \Rm^3)}, & & R'>R.
 \end{align*}
\end{lemma}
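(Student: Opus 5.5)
Since $u$ is defined only in $\Omega_R$, we first pass to the whole space. By definition \eqref{def of exterior sol}, $u$ is the restriction to $\Omega_R$ of the global solution $\tilde u$ of $\square \tilde u = \chi_R F$ with data $(u_0,u_1)$. We write $\tilde u = u_0^L + \tilde u_F$, where $u_0^L=\mathbf{S}_L(u_0,u_1)$ has radiation profiles $G_{0,\pm}$ by Theorem \ref{radiation}. The assumption $\chi_R F \in L^1L^2$ gives a standard Duhamel/Strichartz argument: the limits
\[
 \vec{V}_\pm = \lim_{t\to\pm\infty} \mathbf{S}_L(-t)\vec{\tilde u}_F(t) = -\int_0^{\pm\infty} \mathbf{S}_L(-t')(0,\chi_R F(t'))\,{\rm d}t'
\]
exist in $\dot H^1\times L^2$, because the integrand is Bochner integrable with norm $\|\chi_RF(t')\|_{L^2}$. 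Hence $\vec{\tilde u}(t) - \vec{\mathbf{S}}_L(t)\big((u_0,u_1)+\vec V_\pm\big)\to 0$ in the energy space as $t\to\pm\infty$. We then define $G_\pm$ to be the radiation profiles (in the respective time direction) of the free wave with data $(u_0,u_1)+\vec V_\pm$. Theorem \ref{radiation} gives the two limits, and restricting the integrals to $r>R\pm t$ gives exactly the stated formulas, up to the sign convention for $u_r$ in each direction. Uniqueness of $G_\pm$ on $[R,\infty)$ is immediate, since two $L^2$ profiles giving the same limit must agree a.e. on $s>R$.

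For the estimate we use linearity: $G_+ - G_{0,+}$ is the profile of the free wave with data $\vec V_+$ restricted to $s\in[R,R']$. The key step is localization by finite speed of propagation. Only the part of the forcing in the channel $\Omega_{R,R'}$ affects the profile on $[R,R']$. To see this, split $\chi_R F = \chi_{R,R'}F + \chi_{R'}F$. The contribution of $\chi_{R'}F$ (forcing supported in $|x|>|t|+R'$) to the solution vanishes near the lines $r-t\in(R,R')$ as $t\to+\infty$. Concretely, each Duhamel piece $\mathbf{S}_L(t-t')(0,\chi_{R'}F(t'))$ has data supported in $|x|>t'+R'$. Therefore its positive-direction radiation profile is supported in $s\ge R'$, which follows from the explicit formula \eqref{basic identity radiation free wave} together with strong Huygens. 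The support claim is where I expect the care to be needed. Data supported in $|x|>\rho$ have $G_-$ supported in $|s|\ge\rho$ by \eqref{initial data by radiation profile} (with $u_0=0$, and both $G_-(r)$ and $G_-(-r)$ determined away from the ball), and then $G_+(s)=-G_-(-s)$. Shifting by the time translation $t'$ moves the support to $s\ge R'$ when measured in the variable $r-t$.

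Finally we bound the norm. For the remaining forcing $\chi_{R,R'}F$, the isometry in Theorem \ref{radiation} ($\sqrt2 G_+$ isometric, combined with $\|\cdot\|_{\dot H^1\times L^2}$ on $\Rm^3$ computed radially, which produces the factor $8\pi$ as in \eqref{radiation residue identity}) gives
\[
\sqrt{8\pi}\,\|G_+-G_{0,+}\|_{L^2(\Rm)} \le \tfrac{1}{\sqrt2}\cdot\ldots
\]
The precise constant needs bookkeeping. Note that for pure velocity data $(0,f)$ the energy $\|f\|_{L^2}^2 = 4\pi\int r^2f^2$ splits evenly between $G$ at $s>0$ and at $s<0$. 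This halving is what upgrades $\sqrt{8\pi}$ to $4\sqrt\pi$. By Minkowski's inequality over $t'$, $4\sqrt{\pi}\|G_+-G_{0,+}\|_{L^2([R,R'])}\le \int_0^\infty \|\chi_{R,R'}F(t')\|_{L^2}{\rm d}t'$. Each piece $(0,f)$ contributes a profile with $\|G\|_{L^2(\Rm)}^2 = \|f\|^2_{L^2}/(8\pi)$, and only half of that lies on the relevant side $s>R$, giving norm $\le \|f\|_{L^2}/(4\sqrt\pi)$. The negative time direction is symmetric.
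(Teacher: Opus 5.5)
Your proof is correct and is essentially the paper's argument: the paper (deferring to \cite{dynamics3d}) records the formula $G_+(s)-G_{0,+}(s)=\frac12\int_0^\infty (s+t)F(s+t,t)\,{\rm d}t$ for $s>R$, which is exactly the superposition over $t'$ of your per-piece profiles $\frac12(s+t')F(s+t',t')$ coming from the Duhamel data $(0,\chi_R F(t'))$, and the localization to $\Omega_{R,R'}$ and the constant $4\sqrt{\pi}$ then come from the same Minkowski step using $\|rF(r,t)\|_{L^2_r}=\|F(t)\|_{L^2(\Rm^3)}/(2\sqrt{\pi})$. Two minor points: your ``halving'' needs a justification, namely the oddness $G_+(\sigma)=\frac12\sigma f(|\sigma|)$ of the profile of $(0,f)$, which follows from \eqref{initial data by radiation profile} with $u_0=0$; and with $\square=\partial_t^2-\Delta$ the Duhamel limit $\vec V_+$ carries a plus sign, not a minus sign.
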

Please refer to Section 2 (Lemma 2.5 and Remark 2.6) of the author's previous work \cite{dynamics3d} for the proof of this lemma. In fact we may give an explicit formula
 \[
  G_+ (s) - G_{0,+} (s) = \frac{1}{2} \int_{0}^\infty (s+t) F(s+t,t) {\rm d} t. 
 \]
\paragraph{Nonlinear equations} Please note that Lemma \ref{scatter profile of nonlinear solution} applies to exterior solutions $u$ to (CP1) defined in $\Omega_R$, as long as the inequality $\|\chi_R u\|_{Y(\Rm)} < +\infty$ holds, because this assumption guarantees that the inequality $\|\chi_R F(u)\|_{L^1 L^2(\Rm \times \Rm^3)} < +\infty$ holds. In this case the corresponding radiation profiles $G_\pm \in L^2([R,+\infty))$ given in Lemma \ref{scatter profile of nonlinear solution} is called the (nonlinear) radiation profile of $u$. 

\begin{remark}
 Whenever we talk about a radiation profile in subsequent sections without specifying whether it is the radiation profile in the positive or negative direction, we refer to the radiation profile in the negative time direction. 
\end{remark}

\subsection{Asymptotically equivalent solutions} \label{sec: asymptotically equivalent solutions}

Assume that $u, v \in \mathcal{C}(\Rm; \dot{H}^1\times L^2)$ and $R\geq 0$. We say that $u$ and $v$ are $R$-weakly asymptotically equivalent if and only if the following limit holds
\[
 \lim_{t\rightarrow \pm \infty} \int_{|x|>R+|t|} |\nabla_{t,x} (u-v)|^2 {\rm d} x = 0.
\]
In particular, we say that $u$ and $v$ are asymptotically equivalent to each other if $R=0$. Because the integral above only involves the values of $u, v$ in the exterior region $\Omega_R$, the definition above also applies to suitable exterior solutions $u$ and $v$ defined in $\Omega_R$ only.  

\paragraph{Radiation part} If a free wave $v_L$ is asymptotically equivalent to a radial exterior solution $u$ of (CP1), then we call $v_L$ the radiation part of $u$ (outside the light cone). It was prove in \cite{ecarbitrary} that an exterior solution $u$ is asymptotically equivalent to some free wave in $\Omega_0$ if and only if $\|\chi_0 u\|_{Y(\Rm)} < +\infty$. The sufficiency of this condition is a direct consequence of Lemma \ref{scatter profile of nonlinear solution}. Indeed, if $\|\chi_0 u\|_{Y(\Rm)}<+\infty$, then we may determine its (nonlinear) radiation profile $G_\pm \in L^2(\Rm^+)$ by Lemma \ref{scatter profile of nonlinear solution}, and then construct a free wave with the same radiation profiles for $s>0$, which is the desired asymptotically equivalent free wave. To see why the condition $\|\chi_0 u\|_{Y(\Rm)} < +\infty$ is also necessary, please refer to \cite{ecarbitrary}. Please note that this conception of radiation part is different from the radiation part $u_L$ in a soliton resolution at the blow-up time $T_+$ or $+\infty$, as described in the introduction section of this article. 

\paragraph{Non-radiative solutions} A ($R$-weakly) non-radiative solution is a solution $u$ to the free wave equation, or the nonlinear wave equation (CP1), or any other related wave equation such that 
\[
 \lim_{t\rightarrow \pm \infty} \int_{|x|>|t|+R} |\nabla_{t,x} u(x,t)|^2 {\rm d} x = 0.
\]
In other words, a solution $u$ is ($R$-weakly) non-radiative if and only if it is ($R$-weakly) asymptotically equivalent to zero. Non-radiative solution is one of  most important topics in the channel of energy method (see \cite{channeleven, tkm1, channel} for example), which plays an important role in the study of nonlinear wave equations in recent years.  

For an example, the ground states $W_\lambda(x)$ are all non-radiative solutions to (CP1). Thus if a free wave $v_L$ is asymptotically equivalent to a solution $u$ to (CP1), then 
\[
 S_\ast(x,t) = \sum_{j=1}^J \zeta_j W_{\lambda_j}(x) + v_L(x,t) 
\]
is also asymptotically equivalent to $u$, for any given positive integer $J$, signs $\zeta_j \in \{+1,-1\}$ and scales $\lambda_j > 0$. 

\begin{remark}
 The standard ground state $W(x)$ in this article is a dilation of (thus slightly different from) the one $(1+|x|^2/3)^{-1/2}$ used in most related works. This choice eliminates the addition constant in the asymptotic behaviour $W_\lambda(r) \approx \lambda^{1/2} r^{-1}$ for large $r$.
\end{remark}

\subsection{Several Strichartz estimates} \label{sec: refined Strichartz} 

In this subsection we prove several refined Strichartz estimates for further use. Most of them can be verified by a straight-forward calculation. The first few lemmata are concerning the Strichartz norm of the ground state in several channel-like regions. 
\begin{lemma} \label{upper bound of W norm}
 Let $0 \leq r_1 <r_2\leq R$ be positive constants, $R\geq 1$ and 
 \[ 
  \Psi = \{(x,t): r_1+|t|<|x|<r_2+|t|, |x|+|t| > R\}.
 \]
Then we have 
 \[
  \|\chi_{\Psi} W\|_{Y(\Rm)} \lesssim_1 (r_2-r_1)^{1/10} R^{-3/5}. 
 \]
 In addition, if $0\leq r_1 < r_2$, then 
 \[
  \|\chi_{r_1,r_2} W\|_{Y(\Rm)} \lesssim_1 (r_2-r_1)^{1/10} \min\left\{r_1^{-3/5},1\right\}. 
 \]
\end{lemma}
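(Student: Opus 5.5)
The bound is a direct computation in polar coordinates, using only the pointwise estimate $W(x) = (1/3+|x|^2)^{-1/2} \lesssim_1 \min\{1, |x|^{-1}\}$ and the symmetry $t\mapsto -t$. We have
\[
 \|\chi_\Psi W\|_{Y(\Rm)}^5 = \int_{\Rm} \left(\int_{\Psi_t} |W(x)|^{10} {\rm d}x\right)^{1/2} {\rm d}t,
\]
where $\Psi_t = \{x: r_1+|t|<|x|<r_2+|t|,\ |x|+|t|>R\}$ is the slice at time $t$. By symmetry it suffices to treat $t\geq 0$.

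For the first estimate, every $x\in \Psi_t$ satisfies $|x|>r_1+t\geq t$ and $|x|>R-t$, so $|x|\geq \max\{t, R-t\} \geq \max\{t,R/2\}\geq 1/2$. Since $W\lesssim |x|^{-1}$ there and the shell has width at most $r_2-r_1$, we get
\[
 \int_{\Psi_t} |W|^{10}{\rm d}x \lesssim \int_{\max\{t,R/2\}}^{\max\{t,R/2\}+(r_2-r_1)} r^{-8}\,{\rm d}r \lesssim (r_2-r_1)\max\{t,R/2\}^{-8}.
\]
Here I use that the shell $(a,a+h)$ with $a = \max\{r_1+t, R-t\}\geq \max\{t,R/2\}$ carries $\int r^{-8} \leq h a^{-8}$. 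Taking the square root and integrating in $t$ gives
\[
 (r_2-r_1)^{1/2}\left(\int_0^{R/2} R^{-4}{\rm d}t + \int_{R/2}^\infty t^{-4}{\rm d}t\right) \lesssim (r_2-r_1)^{1/2} R^{-3}.
\]
The fifth root is then $(r_2-r_1)^{1/10}R^{-3/5}$, as claimed. The hypotheses $r_2\leq R$ and $R\geq 1$ are not even needed beyond ensuring $|x|$ is bounded away from zero.

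For the second estimate, $\Psi_t$ is the shell $r_1+t<|x|<r_2+t$. Let $a=r_1+t$ and use $W\lesssim \min\{1,|x|^{-1}\}$ to get
\[
 \int_{\Psi_t}|W|^{10}{\rm d}x \lesssim \int_a^{a+(r_2-r_1)} \min\{1,r^{-8}\}\,{\rm d}r \leq (r_2-r_1)\min\{1,a^{-8}\}.
\]
Hence
\[
 \|\chi_{r_1,r_2}W\|_Y^5 \lesssim (r_2-r_1)^{1/2}\int_0^\infty \min\{1,(r_1+t)^{-4}\}\,{\rm d}t.
\]
The last integral is bounded by $\int_0^\infty \min\{1,(r_1+t)^{-4}\}\,{\rm d}t$, which is $\lesssim 1$ in general and $\lesssim r_1^{-3}$ when $r_1\geq 1$. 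In both cases it is $\lesssim \min\{r_1^{-3},1\}$. Taking fifth roots gives the claim.

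There is no real obstacle. The only point needing care is the geometric lower bound $|x|\geq \max\{t,R/2\}$ on the slices in the first part, since it is what produces the decay $R^{-3}$ after the time integration.
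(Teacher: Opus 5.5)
Your proof is correct and is essentially the paper's computation: bound $W^{10}r^2$ by $r^{-8}$ (resp.\ $\min\{1,r^{-8}\}$), use the slice lower bound $\max\{t+r_1,R-t\}\geq R/2$, and integrate in $t$. The only differences are cosmetic. You integrate from $t=0$ with $\max\{t,R/2\}^{-4}$, so $r_2\leq R$ is indeed not needed, and you merge the paper's two separate bounds for the second estimate into the single integral $\int_0^\infty \min\{1,(r_1+t)^{-4}\}\,{\rm d}t$.
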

\begin{proof}
 The proof follows a straight-forward calculation. 
 \begin{align*}
  \|\chi_{\Psi} W\|_{Y(\Rm)}^5 &\lesssim_1 \int_{\frac{R-r_2}{2}}^\infty \left(\int_{\max\{t+r_1,R-t\}}^{t+r_2} \left(\frac{1}{3} + r^2\right)^{-5} r^2 {\rm d} r\right)^{1/2} {\rm d} t \\
  & \lesssim_1 \int_{\frac{R-r_2}{2}}^R \left(R^{-8} (r_2 - r_1)\right)^{1/2} {\rm d} t + \int_R^\infty \left(\frac{r_2 - r_1}{(t+r_1)^8}\right)^{1/2} {\rm d} t\\
  & \lesssim_1 (r_2-r_1)^{1/2} R^{-3}. 
 \end{align*}
 Here we only consider the positive time direction by time symmetry and use the inequality $\max\{t+r_1,R-t\} \geq R/2$. The second inequalities can be proved in the same manner. On one hand, we have  
 \begin{align*}
  \|\chi_{r_1,r_2} W\|_{Y(\Rm)}^5 &\lesssim_1 \int_{0}^\infty \left(\int_{t+r_1}^{t+r_2} \left(\frac{1}{3} + r^2\right)^{-5} r^2 {\rm d} r\right)^{1/2} {\rm d} t \\
  & \lesssim_1 \int_0^\infty \left(\frac{r_2-r_1}{(t+r_1)^8}\right)^{1/2} {\rm d} t \\
  & \lesssim_1 (r_2-r_1)^{1/2} r_1^{-3}. 
 \end{align*}
 On the other hand, since 
 \[
   \left(\frac{1}{3} + r^2\right)^{-5} r^2 \lesssim_1 \min\left\{1, r^{-8}\right\},
 \]
 we also have
 \begin{align*}
  \|\chi_{r_1,r_2} W\|_{Y(\Rm)}^5 &\lesssim_1 \int_{0}^\infty \left(\int_{t+r_1}^{t+r_2} \left(\frac{1}{3} + r^2\right)^{-5} r^2 {\rm d} r\right)^{1/2} {\rm d} t \\
  & \lesssim_1 \int_0^1 \left(r_2-r_1\right)^{1/2} {\rm d} t + \int_1^\infty \left(\frac{r_2 - r_1}{(t+r_1)^8}\right)^{1/2} {\rm d} t\\
  & \lesssim_1 (r_2-r_1)^{1/2}. 
 \end{align*}
 Combining these two upper bounds, we finish the proof. 
\end{proof}

\begin{lemma} \label{L1L2 channel estimate mixed}
 Let $\lambda > 1$ be a radius
 \begin{itemize}
  \item If $1/2 \leq r_1 < r_2$, then 
  \begin{align*}
  & \left\|\chi_{r_1,r_2} W^4 W_\lambda\right\|_{L^1 L^2} \lesssim_1   \lambda^{-1/2} (r_2-r_1)^{1/2} r_1^{-2}; &
   &\left\|\chi_{r_1,r_2} W^3 W_\lambda^2\right\|_{L^1 L^2}  \lesssim_1 \lambda^{-1} (r_2-r_1)^{1/2} r_1^{-1}; 
  \end{align*} 
  \item If $0\leq r_1 < r_2 \leq 1$, then
  \begin{align*}
  & \left\|\chi_{r_1,r_2} W^4 W_\lambda\right\|_{L^1 L^2}  \lesssim_1 \lambda^{-1/2} (r_2-r_1)^{1/2};  &
   &\left\|\chi_{r_1,r_2} W^3 W_\lambda^2\right\|_{L^1 L^2}  \lesssim_1 \lambda^{-1} (r_2-r_1)^{1/2}. 
  \end{align*} 
  \item If $0\leq r_1 < r_2 \leq \lambda$, then
  \[
   \left\|\chi_{r_1,r_2} W W_\lambda^4\right\|_{L^1 L^2}  \lesssim_1 \lambda^{-1} (r_2-r_1)^{1/2}.
  \]
 \end{itemize}
\end{lemma}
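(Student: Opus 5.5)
Each estimate reduces to an explicit integral, bounded in the same way as in Lemma \ref{upper bound of W norm}. By time symmetry we only treat $t\ge 0$. For a radial function $f$ we have
\[
 \|\chi_{r_1,r_2} f\|_{L^1L^2} \simeq_1 \int_0^\infty \left(\int_{t+r_1}^{t+r_2} |f(r)|^2 r^2\,{\rm d}r\right)^{1/2}{\rm d}t.
\]
The pointwise bounds to use are $W(r)\simeq_1 \min\{1,r^{-1}\}$ and $W_\lambda(r)=\lambda^{-1/2}W(r/\lambda)\simeq_1 \lambda^{-1/2}\min\{1,\lambda/r\}$. In particular $W_\lambda(r)\lesssim_1 \lambda^{-1/2}$ for all $r$, and $W_\lambda(r)\lesssim_1 \lambda^{1/2} r^{-1}$. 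The strategy throughout is to bound the inner $r$-integral by the length $(r_2-r_1)$ times the supremum of $|f|^2r^2$ on $[t+r_1,t+r_2]$, and then integrate in $t$.

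\emph{First case, $1/2\le r_1<r_2$.} On the region we have $r\ge t+r_1\ge 1/2$, so $W^4\lesssim r^{-4}$.
\begin{itemize}
\item For $W^4W_\lambda$ we use $W_\lambda\lesssim\lambda^{-1/2}$. Then $|f|^2r^2\lesssim \lambda^{-1}r^{-6}$, and
\[
 \int_0^\infty \lambda^{-1/2}(r_2-r_1)^{1/2}(t+r_1)^{-3}\,{\rm d}t \lesssim \lambda^{-1/2}(r_2-r_1)^{1/2}r_1^{-2}.
\]
\item For $W^3W_\lambda^2$ we have $|f|^2r^2\lesssim \lambda^{-2}r^{-4}$, giving
\[
 \int_0^\infty \lambda^{-1}(r_2-r_1)^{1/2}(t+r_1)^{-2}\,{\rm d}t \lesssim \lambda^{-1}(r_2-r_1)^{1/2}r_1^{-1}.
\]
\end{itemize}

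\emph{Second case, $0\le r_1<r_2\le1$.} Split the time integral into $t\in[0,1]$ and $t\ge1$. For $t\le 1$, use $W\lesssim1$ and $r\lesssim 2$; the contribution is then $\lesssim \lambda^{-1/2}(r_2-r_1)^{1/2}$, respectively $\lambda^{-1}(r_2-r_1)^{1/2}$. For $t\ge1$ we have $r\ge t$, and the same decay computation as in the first case gives $\int_1^\infty t^{-3}\,{\rm d}t$, respectively $\int_1^\infty t^{-2}\,{\rm d}t$, both finite. Together these give the stated bounds.

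\emph{Third case, $W W_\lambda^4$ with $0\le r_1<r_2\le\lambda$.} This is the case that needs care, because the bound must not lose any power of $\lambda$. Use $W\lesssim\min\{1,r^{-1}\}$ and $W_\lambda^4\lesssim \min\{\lambda^{-2},\lambda^2r^{-4}\}$, and split at $t=\lambda$.
\begin{itemize}
\item For $t\le\lambda$: $|f|^2r^2\lesssim \lambda^{-4}\min\{r^2,1\}\le\lambda^{-4}$. The inner square root is $\lesssim \lambda^{-2}(r_2-r_1)^{1/2}$, and integrating over $t\in[0,\lambda]$ gives $\lambda^{-1}(r_2-r_1)^{1/2}$.
\item For $t\ge\lambda$: $r\ge t\ge\lambda\ge1$, so $|f|^2r^2\lesssim r^{-2}\lambda^4r^{-8}r^2=\lambda^4r^{-8}$. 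The contribution is $\int_\lambda^\infty \lambda^2(r_2-r_1)^{1/2}t^{-4}\,{\rm d}t\simeq \lambda^{-1}(r_2-r_1)^{1/2}$.
\end{itemize}
The hypothesis $r_2\le\lambda$ only serves to guarantee that $r\lesssim\lambda$ when $t\le\lambda$; the bound $\lambda^{-4}$ on $[0,\lambda]$ does not actually need it.

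The main obstacle is this third case. One must split at $t\simeq\lambda$ rather than at $t\simeq1$; otherwise the $t$-integral on $[1,\lambda]$ loses powers of $\lambda$.
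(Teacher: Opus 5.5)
Your proposal is correct and is essentially the paper's proof. It uses the same reduction to $\int_0^\infty\bigl(\int_{t+r_1}^{t+r_2}|f|^2r^2\,{\rm d}r\bigr)^{1/2}{\rm d}t$, the same pointwise bounds on $W$ and $W_\lambda$, and the same splits of the time integral: at $t=1$ in the second case and at $t=\lambda$ in the third. Your side remark is also accurate: the $\lambda^{-4}$ bound on $[0,\lambda]$, and hence the third estimate, does not actually use $r_2\le\lambda$.
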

Please note that all the $L^1 L^2$ norms are the abbreviation of $L^1 L^2(\Rm \times \Rm^3)$ in this work, unless specified otherwise. 
\begin{proof}
 The proof is simply a straight forward calculation. We first assume that $1/2 \leq r_1 < r_2$. Then we have
 \begin{align*}
  \left\|\chi_{r_1,r_2} W^4 W_\lambda\right\|_{L^1 L^2} &\lesssim_1 \int_0^\infty \left(\int_{r_1+t}^{r_2+t} \left(\frac{1}{3} + r^2\right)^{-4} \frac{1}{\lambda}\left(\frac{1}{3}+\frac{r^2}{\lambda^2}\right)^{-1}r^2 {\rm d} r \right)^{1/2} {\rm d} t\\
  & \lesssim_1\int_{0}^\infty \left(\int_{r_1+t}^{r_2+t} \frac{1}{\lambda r^6} {\rm d} r \right)^{1/2} {\rm d} t \\
  & \lesssim_1 \int_{0}^\infty \frac{(r_2-r_1)^{1/2}}{\lambda^{1/2} (r_1+t)^3} {\rm d} t \\
  & \lesssim_1 \lambda^{-1/2} (r_2-r_1)^{1/2} r_1^{-2}. 
 \end{align*}
  \begin{align*}
  \left\|\chi_{r_1,r_2} W^3 W_\lambda^2\right\|_{L^1 L^2} &\lesssim_1 \int_0^\infty \left(\int_{r_1+t}^{r_2+t} \left(\frac{1}{3} + r^2\right)^{-3} \frac{1}{\lambda^2}\left(\frac{1}{3}+\frac{r^2}{\lambda^2}\right)^{-2}r^2 {\rm d} r \right)^{1/2} {\rm d} t\\
  & \lesssim_1\int_{0}^\infty \left(\int_{r_1+t}^{r_2+t} \frac{1}{\lambda^2 r^4} {\rm d} r \right)^{1/2} {\rm d} t \\
  & \lesssim_1 \int_{0}^\infty \frac{(r_2-r_1)^{1/2}}{\lambda (r_1+t)^2} {\rm d} t\\
  & \lesssim_1 \lambda^{-1} (r_2-r_1)^{1/2} r_1^{-1}. 
 \end{align*}
 On the other hand, if $0\leq r_1 < r_2 \leq 1$, then 
  \begin{align*}
  \left\|\chi_{r_1,r_2} W^4 W_\lambda\right\|_{L^1 L^2} &\lesssim_1 \int_0^\infty \left(\int_{r_1+t}^{r_2+t} \left(\frac{1}{3} + r^2\right)^{-4} \frac{1}{\lambda}\left(\frac{1}{3}+\frac{r^2}{\lambda^2}\right)^{-1}r^2 {\rm d} r \right)^{1/2} {\rm d} t\\
  & \lesssim_1 \int_{0}^1 \left(\int_{r_1+t}^{r_2+t} \frac{1}{\lambda} {\rm d} r \right)^{1/2} {\rm d} t  + \int_{1}^\infty \left(\int_{r_1+t}^{r_2+t} \frac{1}{\lambda r^6} {\rm d} r \right)^{1/2} {\rm d} t \\
  & \lesssim_1 \int_0^1 \frac{(r_2-r_1)^{1/2}}{\lambda^{1/2}}{\rm d} t + \int_{1}^\infty \frac{(r_2-r_1)^{1/2}}{\lambda^{1/2} (r_1+t)^3} {\rm d} t \\
  & \lesssim_1 \lambda^{-1/2} (r_2-r_1)^{1/2}. 
 \end{align*}
  \begin{align*}
  \left\|\chi_{r_1,r_2} W^3 W_\lambda^2 \right\|_{L^1 L^2} &\lesssim_1 \int_0^\infty \left(\int_{r_1+t}^{r_2+t} \left(\frac{1}{3} + r^2\right)^{-3} \frac{1}{\lambda^2}\left(\frac{1}{3}+\frac{r^2}{\lambda^2}\right)^{-2}r^2 {\rm d} r \right)^{1/2} {\rm d} t\\
  & \lesssim_1 \int_{0}^1 \left(\int_{r_1+t}^{r_2+t} \frac{1}{\lambda^2} {\rm d} r \right)^{1/2} {\rm d} t  + \int_{1}^\infty \left(\int_{r_1+t}^{r_2+t} \frac{1}{\lambda^2 r^4} {\rm d} r \right)^{1/2} {\rm d} t \\
  & \lesssim_1 \int_0^1 \frac{(r_2-r_1)^{1/2}}{\lambda}{\rm d} t + \int_{1}^\infty \frac{(r_2-r_1)^{1/2}}{\lambda (r_1+t)^2} {\rm d} t\\  
  & \lesssim_1 \lambda^{-1} (r_2-r_1)^{1/2}. 
 \end{align*}
 Finally, for $0\leq r_1<r_2\leq \lambda$, we have
  \begin{align*}
  \left\|\chi_{r_1,r_2} W W_\lambda^4\right\|_{L^1 L^2} &\lesssim_1 \int_0^\infty \left(\int_{r_1+t}^{r_2+t} \left(\frac{1}{3} + r^2\right)^{-1} \frac{1}{\lambda^4}\left(\frac{1}{3}+\frac{r^2}{\lambda^2}\right)^{-4}r^2 {\rm d} r \right)^{1/2} {\rm d} t\\
  & \lesssim_1\int_{0}^\lambda \left(\int_{r_1+t}^{r_2+t} \frac{1}{\lambda^4} {\rm d} r \right)^{1/2} {\rm d} t  + \int_\lambda^\infty \left(\int_{r_1+t}^{r_2+t} \frac{\lambda^4}{r^8} {\rm d} r \right)^{1/2} {\rm d} t\\
  & \lesssim_1 \int_{0}^\lambda \frac{(r_2-r_1)^{1/2}}{\lambda^2} {\rm d} t  + \int_\lambda^\infty \frac{\lambda^2 (r_2-r_1)^{1/2}}{(r_1+t)^4} {\rm d} t\\
  & \lesssim_1 \lambda^{-1} (r_2-r_1)^{1/2}. 
 \end{align*}
\end{proof}

\begin{corollary} \label{global interaction bubbles}
 Let $\lambda > 1$ be a radius. Then 
 \begin{align*}
  \left\|W^4 W_\lambda\right\|_{L^1 L^2} +  \left\|W W_\lambda^4\right\|_{L^1 L^2} \lesssim_1 \lambda^{-1/2}. 
 \end{align*}
\end{corollary}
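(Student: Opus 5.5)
The plan is to prove the bound by a direct computation in radial coordinates, in the same spirit as Lemma \ref{L1L2 channel estimate mixed}. The norm is understood over the exterior region $\Omega_0=\{|x|>|t|\}$, i.e. as the limiting case $r_1=0$, $r_2=+\infty$ of the channel norms in Lemma \ref{L1L2 channel estimate mixed}. This is the only reading under which a time-independent function can have finite $L^1_t$ norm over $\Rm$.

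One could try to sum Lemma \ref{L1L2 channel estimate mixed} over dyadic channels. However, the factor $(r_2-r_1)^{1/2}$ there grows with the channel width, so that sum is awkward. It is cleaner to redo the calculation on the whole region. By time symmetry it suffices to bound
\[
 \int_0^\infty \left(\int_t^\infty |f(r)|^2 r^2\, {\rm d} r\right)^{1/2} {\rm d} t
\]
for $f=W^4W_\lambda$ and for $f=WW_\lambda^4$, using only pointwise bounds on $W$ and $W_\lambda$.

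For $W^4 W_\lambda$ I will use the crude bound $W_\lambda(r)=\lambda^{-1/2}W(r/\lambda)\le \sqrt3\,\lambda^{-1/2}$ together with $W(r)^8 r^2\lesssim_1 \min\{1,r^{-6}\}$. This gives
\[
 \int_t^\infty W^8W_\lambda^2 r^2\,{\rm d} r\lesssim_1 \lambda^{-1}\langle t\rangle^{-5}.
\]
Taking the square root yields $\lambda^{-1/2}\langle t\rangle^{-5/2}$, which is integrable in $t$ with integral $\lesssim_1 \lambda^{-1/2}$.

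For $W W_\lambda^4$ I will split the time integral at $t=\lambda$.
\begin{itemize}
 \item For $0<t<\lambda$, use $W^2 r^2\lesssim_1 1$ and $W_\lambda^8=\lambda^{-4}(1/3+r^2/\lambda^2)^{-4}$. The $r$-integral is then $\lesssim_1 \lambda^{-4}\int_0^\infty (1+r/\lambda)^{-8}\,{\rm d} r\lesssim_1 \lambda^{-3}$. Its square root $\lambda^{-3/2}$, integrated over an interval of length $\lambda$, gives $\lambda^{-1/2}$.
 \item For $t>\lambda$, use $W_\lambda^8\lesssim_1 \lambda^4 r^{-8}$ and $W^2r^2\lesssim_1 1$. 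The $r$-integral is then $\lesssim_1 \lambda^4 t^{-7}$. Its square root is $\lambda^2t^{-7/2}$, and $\int_\lambda^\infty \lambda^2 t^{-7/2}\,{\rm d} t\lesssim_1 \lambda^{-1/2}$.
\end{itemize}
Adding the two pieces, and doubling for negative times, gives the claim.

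The only point needing care is the choice of pointwise bounds in the $WW_\lambda^4$ term. For $t<\lambda$ one must keep the decay of $W_\lambda$ beyond $r\sim\lambda$ but must not discard the factor $r^2$ against $W^2$. Using $W^2r^2\lesssim 1$ handles both requirements simultaneously. Everything else is routine.
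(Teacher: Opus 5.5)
Your argument is correct. Reading the norm as one over $\Omega_0$ is exactly what the paper does: its decompositions $\chi_{0,1}+\sum_{k\ge 0}\chi_{2^k,2^{k+1}}$ and $\chi_{0,\lambda}+\chi_\lambda$ tile $\Omega_0$, and the corollary is applied with $\chi_0$ in Remark \ref{square estimate}.

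Your route differs mildly from the paper's. For $W^4W_\lambda$, the paper performs precisely the dyadic summation you set aside. On $\chi_{2^k,2^{k+1}}$ the first bullet of Lemma \ref{L1L2 channel estimate mixed} gives $\lambda^{-1/2}(2^k)^{1/2}(2^k)^{-2}=\lambda^{-1/2}2^{-3k/2}$. So the factor $r_1^{-2}$ more than compensates the growth of $(r_2-r_1)^{1/2}$, and the series converges; $\chi_{0,1}$ is covered by the second bullet. The dyadic sum is awkward only for $WW_\lambda^4$ beyond scale $\lambda$, where the third bullet no longer applies.

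For $WW_\lambda^4$, the paper splits in the channel variable rather than in time. The third bullet on $\chi_{0,\lambda}$ gives $\lambda^{-1}\lambda^{1/2}$. On $\chi_\lambda$ it uses the same far-field bound $W^2W_\lambda^8r^2\lesssim\lambda^4r^{-8}$ as your $t>\lambda$ piece.

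The paper's version gets the corollary almost for free from a channel lemma it needs anyway in the later bubble-interaction estimates. Yours is self-contained and avoids the lemma's side conditions $r_1\ge 1/2$ and $r_2\le\lambda$. Your crude bound $W_\lambda\le\sqrt3\,\lambda^{-1/2}$ also disposes of the first term in one line.
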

\begin{proof} 
 The estimate of $W^4 W_\lambda$ is a direct consequence of Lemma \ref{L1L2 channel estimate mixed}. 
 \begin{align*}
  \left\|W^4 W_\lambda\right\|_{L^1 L^2} & \leq \left\|\chi_{0,1} W^4 W_\lambda\right\|_{L^1 L^2} + \sum_{k=0}^\infty \left\|\chi_{2^k,2^{k+1}} W^4 W_\lambda\right\|_{L^1 L^2} \\
  & \lesssim_1 \lambda^{-1/2} + \sum_{k=0}^\infty \lambda^{-1/2} 2^{-3k/2} \lesssim_1 \lambda^{-1/2}. 
 \end{align*}
 Next we conduct a direct calculation
 \begin{align*}
  \left\|\chi_\lambda W W_\lambda^4\right\|_{L^1 L^2} & \lesssim_1 \int_0^\infty \left(\int_{\lambda+t}^{\infty} \left(\frac{1}{3} + r^2\right)^{-1} \frac{1}{\lambda^4}\left(\frac{1}{3}+\frac{r^2}{\lambda^2}\right)^{-4}r^2 {\rm d} r \right)^{1/2} {\rm d} t\\
  & \lesssim_1 \int_0^\infty \left(\int_{\lambda+t}^{\infty} \frac{\lambda^4}{r^8} {\rm d} r \right)^{1/2} {\rm d} t\\
  & \lesssim_1 \int_0^\infty \frac{\lambda^2}{(\lambda+t)^{7/2}} {\rm d} t\\
  & \lesssim_1 \lambda^{-1/2}. 
 \end{align*}
 Combining this with Lemma \ref{L1L2 channel estimate mixed}, we obtain 
 \[
  \left\|W W_\lambda^4\right\|_{L^1 L^2} \leq \left\|\chi_{0,\lambda} W W_\lambda^4\right\|_{L^1 L^2} + \left\|\chi_\lambda W W_\lambda^4\right\|_{L^1 L^2} \lesssim_1 \lambda^{-1/2}.
 \]
 This completes the proof. 
\end{proof}

\begin{lemma} \label{estimate W J lambda}
 Assume that $\lambda \geq 2$ and $0\leq r_1 < r_2 \leq \lambda$. Then 
 \[ 
  \left\|\chi_{r_1,r_2} W^4 (W_\lambda - \sqrt{3} \lambda^{-1/2})\right\|_{L^1 L^2} \lesssim_1 (r_2-r_1)^{1/2} \lambda^{-5/2} \ln \lambda.
 \]
\end{lemma}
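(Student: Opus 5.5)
Since $W(x) = (1/3+|x|^2)^{-1/2}$ in this paper, we have $W_\lambda(r) = \lambda^{-1/2}(1/3 + r^2/\lambda^2)^{-1/2}$ and $\sqrt{3}\lambda^{-1/2} = W_\lambda(0)$. The plan is to bound the difference $W_\lambda(r) - \sqrt3\lambda^{-1/2}$ pointwise, and then carry out the same kind of direct integration used in Lemma \ref{L1L2 channel estimate mixed}.

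\emph{Step 1 (pointwise bound).} Write $W_\lambda(r) - \sqrt3\lambda^{-1/2} = \sqrt3\lambda^{-1/2}\big((1+3r^2/\lambda^2)^{-1/2} - 1\big)$. Using $|(1+y)^{-1/2}-1| \le \min\{y/2,1\}$ for $y\ge 0$, this gives
\[
 |W_\lambda(r) - \sqrt3 \lambda^{-1/2}| \lesssim_1 \lambda^{-1/2}\min\{r^2\lambda^{-2},1\} \lesssim_1 \lambda^{-5/2} r^2 .
\]
The final bound $\lambda^{-5/2}r^2$ holds for every $r\ge0$. Combining it with $W^4 \lesssim_1 \min\{1,r^{-4}\}$ gives the pointwise bound on the integrand
\[
 W^4 |W_\lambda - \sqrt3\lambda^{-1/2}| \lesssim_1 \lambda^{-5/2}\min\{r^2, r^{-2}\} \quad\text{when } r\le\lambda,
\]
and $\lesssim_1 \lambda^{-1/2} r^{-4}$ when $r>\lambda$.

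\emph{Step 2 (integration over the channel).} By time symmetry it suffices to handle $t\ge0$, and we have
\[
 \|\chi_{r_1,r_2}\cdot\|_{L^1L^2} \lesssim_1 \int_0^\infty \Big(\int_{r_1+t}^{r_2+t} |f(r)|^2 r^2\,{\rm d}r\Big)^{1/2}{\rm d}t ,
\]
where $f = W^4(W_\lambda - \sqrt3\lambda^{-1/2})$. The interval $[r_1+t,r_2+t]$ has length $r_2-r_1$, so the inner integral is at most $(r_2-r_1)\sup_{r\ge t} |f(r)|^2r^2$. We then split the $t$-integral into three ranges.
\begin{itemize}
 \item For $t\in[0,1]$: here $r\le 1+\lambda\lesssim\lambda$, so $|f|r\lesssim \lambda^{-5/2}$, and the contribution is $\lesssim(r_2-r_1)^{1/2}\lambda^{-5/2}$.
 \item For $t\in[1,\lambda]$: every $r$ in the interval satisfies $r\ge t$ and $r\le 2\lambda$. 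The Step 1 bounds, together with the crude estimate $\min\{r^2\lambda^{-2},1\}\le 4$ on $r\le2\lambda$, give $|f|r\lesssim \lambda^{-5/2} t^{-1}$. Integrating over $t\in[1,\lambda]$ produces the factor $\ln\lambda$, so this range contributes $(r_2-r_1)^{1/2}\lambda^{-5/2}\ln\lambda$. This is the source of the logarithm.
 \item For $t\ge\lambda$: here $r\ge\lambda$, so $|f|r\lesssim \lambda^{-1/2}r^{-3}\le\lambda^{-1/2}t^{-3}$. Integrating over $t\ge\lambda$ gives $\lambda^{-1/2}\lambda^{-2} = \lambda^{-5/2}$.
\end{itemize}
The assumption $\lambda\ge2$ guarantees $\ln\lambda\gtrsim1$, so all three pieces are dominated by $(r_2-r_1)^{1/2}\lambda^{-5/2}\ln\lambda$, which is the claimed estimate.

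\emph{Main obstacle.} The only delicate point is the middle range $1\le t\le\lambda$. There the bound must produce exactly one logarithm: we have to use the decay $r^{-2}$ coming from $W^4 r^2$ together with the gain $\lambda^{-2}$ from the Taylor expansion. In particular, we must not replace the difference $W_\lambda - \sqrt3\lambda^{-1/2}$ by $W_\lambda$ alone, which would lose a factor of $\lambda^2$.
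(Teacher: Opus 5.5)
Your proposal is correct and follows essentially the same route as the paper. Both arguments use the pointwise bound $|W_\lambda-\sqrt{3}\lambda^{-1/2}|\lesssim_1\min\{\lambda^{-5/2}r^2,\lambda^{-1/2}\}$ and split the time integral at $t=\lambda$: the range $t\le\lambda$ produces the logarithm, and the tail $t\ge\lambda$ contributes $\lambda^{-1/2}\cdot\lambda^{-2}$. The paper integrates $\lambda^{-5/2}(1/3+(r_1+t)^2)^{-1/2}$ over $[0,\lambda]$ directly instead of splitting off $[0,1]$. Your remark about the ``crude estimate $\min\{r^2\lambda^{-2},1\}\le 4$'' is superfluous, since the bound you actually need, $\min\{r^2\lambda^{-2},1\}\le r^2\lambda^{-2}$, holds for every $r$; this does not affect the argument.
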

\begin{proof}
 we observe that
 \[
  \left|W_\lambda - \sqrt{3} \lambda^{-1/2}\right| \lesssim_1 \min\left\{\lambda^{-5/2} r^2, \lambda^{-1/2}\right\}.
 \]
Thus 
\begin{align*}
 \hbox{LHS} & \lesssim_1 \int_0^\infty \left(\int_{r_1+t}^{r_2+t} \left(\frac{1}{3} + r^2\right)^{-4} \left|W_\lambda - \sqrt{3} \lambda^{-1/2}\right|^2 r^2 {\rm d} r\right)^{1/2} {\rm d} t \\
 & \lesssim_1 \int_0^\lambda \left(\int_{r_1+t}^{r_2+t} \lambda^{-5} \left(\frac{1}{3} + r^2\right)^{-1} {\rm d} r\right)^{1/2} {\rm d} t + \int_\lambda^\infty \left(\int_{r_1+t}^{r_2+t} \lambda^{-1} r^{-6} {\rm d} r\right)^{1/2} {\rm d} t\\
 & \lesssim_1 \int_0^\lambda \frac{(r_2-r_1)^{1/2}}{\lambda^{5/2}} \left(\frac{1}{3} + (r_1+t)^2\right)^{-1/2} {\rm d} t + \int_\lambda^\infty \frac{(r_2-r_1)^{1/2}}{\lambda^{1/2} (r_1+t)^3} {\rm d} t\\
 & \lesssim_1 (r_2-r_1)^{1/2} \lambda^{-5/2} \ln \lambda. 
\end{align*}
\end{proof}

The following results are concerning the $L^5 L^{10}$ norm of free waves in the channel-like regions. 
\begin{lemma} \label{lemma csg}
 Let $v$ be a radial free wave with a radiation profile $G$ and $0\leq r_1<r_2\leq R$ be radii. If $G(s) = 0$ for $|s|<R$, then we have 
 \[
  \|\chi_{r_1,r_2} v\|_{Y(\Rm)} \lesssim_1\left(\frac{r_2-r_1}{R}\right)^\frac{1}{10} \|G\|_{L^2(\Rm)}.
 \]
\end{lemma}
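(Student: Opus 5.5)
We would use the explicit formula \eqref{basic identity radiation free wave}, $v(r,t) = \frac{1}{r}\int_{t-r}^{t+r} G(s)\,{\rm d}s$, together with the vanishing of $G$ on $(-R,R)$, to get a pointwise bound for $v$ in the channel $\Omega_{r_1,r_2}$. By time symmetry it suffices to treat $t\geq 0$; for negative times we use $G_+(s)=-G_-(-s)$, or simply the reflection $t\mapsto -t$, which preserves the hypothesis. In the region $t+r_1<r<t+r_2$ with $t\ge 0$ we have $t-r\in(-r_2,-r_1]$, so $t-r\in(-R,0]$, where $G$ vanishes. Hence
\[
 v(r,t) = \frac{1}{r}\int_{R}^{t+r} G(s)\,{\rm d}s,
\]
which is zero unless $t+r>R$. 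For $t+r>R$, Cauchy–Schwarz gives
\[
 |v(r,t)| \le \frac{(t+r-R)^{1/2}}{r}\,\|G\|_{L^2}\le \frac{(2t+r_2)^{1/2}}{r}\|G\|_{L^2}.
\]
Note that a crude bound of the form $(t+r)^{1/2}/r$ alone gives too little decay in $r$. The support condition $t+r>R$ is what forces $r\gtrsim R$ whenever $t\lesssim R$, and this is the source of the factor $R^{-1/10}$.

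Next we split the time integral. Since $r_2\le R$ and $r<t+r_2$, the condition $t+r>R$ forces $2t+r_2>R$, that is, $t>(R-r_2)/2$. For such $t$ we have $r>\max\{t+r_1,R-t\}\ge R/2$, and also $r\ge t$. Using $|v|\lesssim (t+R)^{1/2}r^{-1}\|G\|_{L^2}$ and $r\gtrsim t+R$ on the support, we get $|v|\lesssim (t+R)^{-1/2}\|G\|_{L^2}$. Therefore
\[
 \|\chi_{r_1,r_2} v\|_{Y}^5 \lesssim \|G\|_{L^2}^5\int_0^\infty\Bigl(\int_{t+r_1}^{t+r_2} (t+R)^{-5} r^2\,{\rm d}r\Bigr)^{1/2}{\rm d}t \lesssim \|G\|_{L^2}^5 (r_2-r_1)^{1/2}\int_0^\infty (t+R)^{-3/2}\,{\rm d}t.
\]
Here we used $r\lesssim t+R$ on the support. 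The last integral is $\simeq R^{-1/2}$. Taking fifth roots gives $\|\chi_{r_1,r_2}v\|_Y\lesssim (r_2-r_1)^{1/10}R^{-1/10}\|G\|_{L^2}$, which is the claim.

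The main point to check carefully is the lower bound $r\gtrsim t+R$ on the support of the integrand. When $t\ge R$ this follows from $r>t$; when $t<R$ it follows from $r>R-t$ combined with $r>t$, which together give $r>R/2$. This is exactly the step where the support hypothesis on $G$ enters; everything else is the same routine calculation as in Lemma \ref{upper bound of W norm}.
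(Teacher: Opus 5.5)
Your proof is correct and essentially the paper's: you use the explicit formula with Cauchy--Schwarz, note that the support condition on $G$ forces $v=0$ whenever $|t|+r\le R$, and then do the routine channel integration. The only cosmetic difference is that you combine the pointwise bound into $|v|\lesssim (|t|+R)^{-1/2}\|G\|_{L^2}$ on the support, whereas the paper uses $|v|\lesssim\min\{r^{-1/2},R^{-1/2}\}\|G\|_{L^2}$ and splits the time integral at $|t|=R$.
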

\begin{proof}
 We recall the explicit formula 
 \[
   v(r,t) =  \frac{1}{r} \int_{t-r}^{t+r} G(s) {\rm d} s, 
 \]
 which immediately gives the point-wise estimate
 \[
  |v(r,t)| \lesssim_1 r^{-1/2} \|G\|_{L^2(\Rm)}. 
 \]
 In addition, if $|t|<r<R/2$, then the support of $G$ guarantees that $v(r,t) =0$. Thus 
 \[
  |v(r,t)| \lesssim_1 \min\left\{r^{-1/2}, R^{-1/2}\right\} \|G\|_{L^2}, \qquad r>|t|. 
 \]
 A direct calculation then gives 
 \begin{align*}
  \|\chi_{r_1,r_2} v\|_{Y(\Rm)}^5 & \lesssim_1 \int_\Rm \left(\int_{|t|+r_1}^{|t|+r_2} \min\{r^{-5},R^{-5}\}\|G\|_{L^2}^{10} \cdot r^2 {\rm d} r\right)^{1/2} {\rm d} t \\
  & \lesssim_1 \int_{-R}^R \left(\int_{|t|+r_1}^{|t|+r_2} R^{-5} r^2 \|G\|_{L^2}^{10} {\rm d} r\right)^{1/2} {\rm d} t + \int_{|t|>R} \left(\int_{|t|+r_1}^{|t|+r_2} r^{-3} \|G\|_{L^2}^{10} {\rm d} r\right)^{1/2} {\rm d} t\\
  & \lesssim_1 \int_{-R}^R \left[R^{-3} \|G\|_{L^2}^{10} (r_2-r_1)\right]^{1/2} {\rm d} t + \int_{|t|>R} \left(\frac{(r_2-r_1)\|G\|_{L^2}^{10}}{(|t|+r_1)^3}\right)^{1/2} {\rm d} t\\
  & \lesssim_1 \left(\frac{r_2-r_1}{R}\right)^\frac{1}{2} \|G\|_{L^2(\Rm)}^5. 
 \end{align*}
 \end{proof}

\begin{corollary} \label{lemma split initial data}
 Assume that $0<R_0<R_1<R_2<\cdots < R_{m+1}$ is a sequence of positive numbers.  Let $(u_0,u_1) \in \mathcal{H}(R_0)$ be radial initial data with radiation profile $G(s)$. Then the corresponding free wave $u$ satisfies 
 \begin{align*}
  &\left\|\chi_{R_0,R_1} u\right\|_{Y(\Rm)}  \lesssim_1 R_0^{-1/2} \left|\int_{-R_0}^{R_0} G(s) {\rm d} s\right| + \|G\|_{L^2(\{s: R_0<|s|<R_1\})} \\
 & \qquad + \sum_{j=1}^m \left(\frac{R_1-R_0}{R_{j}}\right)^{1/10} \|G\|_{L^2(\{s: R_j <|s|<R_{j+1}\})} + \left(\frac{R_1-R_0}{R_{m+1}}\right)^{1/10} \|G\|_{L^2(\{s: |s|>R_{m+1}\})}. 
 \end{align*}
\end{corollary}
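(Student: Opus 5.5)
We split the radiation profile $G$ into pieces and use linearity of the free propagation together with the triangle inequality for the $Y$ norm. Since only the values of $u$ in $\Omega_{R_0}$ matter, we may replace $(u_0,u_1)$ by any extension, i.e. any profile $\tilde G$ with $\tilde G = G$ for $|s|>R_0$ and the same value of $\int_{-R_0}^{R_0}\tilde G\,ds$. We choose
\[
 \tilde G = G_0 + G_1 + \sum_{j=1}^{m} G_{j+1} + G_{m+2},
\]
where $G_0 = \frac{1}{2R_0}\left(\int_{-R_0}^{R_0} G(s)\,{\rm d}s\right)\mathbf{1}_{[-R_0,R_0]}$, $G_1 = G\,\mathbf{1}_{\{R_0<|s|<R_1\}}$, $G_{j+1} = G\,\mathbf{1}_{\{R_j<|s|<R_{j+1}\}}$ for $1\le j\le m$, and $G_{m+2} = G\,\mathbf{1}_{\{|s|>R_{m+1}\}}$. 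Let $v_k$ be the free wave with profile $G_k$. Then $u = \sum_k v_k$ in $\Omega_{R_0}$, hence in the channel $\Omega_{R_0,R_1}$, and
\[
 \|\chi_{R_0,R_1} u\|_{Y(\Rm)} \le \sum_k \|\chi_{R_0,R_1} v_k\|_{Y(\Rm)}.
\]

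The pieces are bounded as follows.

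\emph{Tail pieces.} For $G_{j+1}$ with $1\le j\le m$, the profile vanishes for $|s|<R_j$, and $R_0<R_1\le R_j$. Lemma \ref{lemma csg} with $R=R_j$, $r_1=R_0$, $r_2=R_1$ gives the factor $((R_1-R_0)/R_j)^{1/10}\|G\|_{L^2(\{R_j<|s|<R_{j+1}\})}$. The same argument with $R=R_{m+1}$ handles $G_{m+2}$.

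\emph{Piece $G_1$.} Apply the global Strichartz estimate $\|v\|_{Y(\Rm)}\lesssim_1 \|\vec v(0)\|_{\mathcal H}$ together with the isometry $\|\vec v(0)\|_{\mathcal H} = \sqrt{8\pi}\|G_1\|_{L^2}$ from Theorem \ref{radiation}. This gives $\lesssim_1 \|G\|_{L^2(\{R_0<|s|<R_1\})}$.

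\emph{Piece $G_0$.} By the same Strichartz bound and isometry, $\|v_0\|_{Y}\lesssim_1 \|G_0\|_{L^2} = (2R_0)^{-1/2}\left|\int_{-R_0}^{R_0}G\,{\rm d}s\right|$, which is exactly the first term of the claimed estimate.

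Summing the bounds for all pieces gives the corollary.

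The only delicate point is justifying the replacement of $G$ on $[-R_0,R_0]$ by the constant $G_0$. By the discussion following \eqref{radiation residue identity}, the free wave in $\Omega_{R_0}$ depends only on $G|_{\{|s|>R_0\}}$ and on $\int_{-R_0}^{R_0}G$. Both are preserved by our choice of $\tilde G$, so this step is settled by the stated uniqueness. Everything else is direct application of Lemma \ref{lemma csg} and Strichartz.
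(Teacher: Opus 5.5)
Your proof is correct and is essentially the paper's argument: the profile is split into the same dyadic-type pieces, Lemma \ref{lemma csg} handles the tails, and Strichartz handles the piece on $R_0<|s|<R_1$. The one small difference is the inner term. The paper keeps the contribution $r^{-1}\int_{-R_0}^{R_0}G(s)\,{\rm d}s$ as an explicit function and computes $\|\chi_{R_0}|x|^{-1}\|_{Y(\Rm)}\lesssim_1 R_0^{-1/2}$ directly. You instead realize this term as the free wave with constant profile on $[-R_0,R_0]$ and get the same bound from Strichartz and the isometry, with no computation.
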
 
\begin{proof}
 We may split the linear free wave $u$ into several ones 
 \begin{align*}
  u(r,t) = \frac{1}{r} \int_{-R_0}^{R_0} G(s) {\rm d} s + \sum_{j=0}^{m+1} v_j (r,t), \qquad r>R_0+|t|.
 \end{align*} 
 Here $v_j$ is the free wave whose radiation profile is exactly the restriction of $G(s)$ on the set $\{s: R_j < |s|<R_{j+1}\}$ (or $\{s: |s|>R_{m+1}\}$ for $j=m+1$). A direct calculation then shows that 
 \[
  \left\|\chi_{R_0} |x|^{-1}\right\|_{Y(\Rm)} \lesssim_1 R_0^{-1/2}. 
 \]
 The conclusion then follows from a combination of this upper bound, the classic Strichartz estimate and Lemma \ref{lemma csg}. 
\end{proof}
 
\begin{remark} \label{remark split initial data}
 Let $0 = R_0 < R_1 < R_2 < \cdots < R_{m+1}$ be a sequence and $(u_0,u_1) \in \dot{H}^1 \times L^2$ be radial initial data with radiation profile $G(s)$. Then the same argument as above gives 
  \begin{align*}
  &\left\|\chi_{0,R_1} u\right\|_{Y(\Rm)}  \lesssim_1 \|G\|_{L^2(\{s: 0<|s|<R_1\})} \\
 & \qquad + \sum_{j=1}^m \left(\frac{R_1}{R_{j}}\right)^{1/10} \|G\|_{L^2(\{s: R_j <|s|<R_{j+1}\})} + \left(\frac{R_1}{R_{m+1}}\right)^{1/10} \|G\|_{L^2(\{s: |s|>R_{m+1}\})}. 
 \end{align*}
\end{remark} 
 
The following results give an upper bound of $L^5 L^{10}$ norm for solutions to the linear wave equation with localized data. 
\begin{lemma} \label{lemma csit}
 Assume that $0\leq r_1 < r_2 \leq R$. Let $u_1 \in L^2(\Rm^3)$ be radial function supported in $\{x: |x|>R\}$. Then the free wave $v = \mathbf{S}_L(0,u_1)$ satisfies 
 \[
  \left\|\chi_{r_1,r_2} v\right\|_{Y(\Rm)} \lesssim_1 \left(\frac{r_2-r_1}{R}\right)^\frac{1}{10} \|u_1\|_{L^2(\Rm^3)}. 
 \]
\end{lemma}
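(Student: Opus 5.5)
The plan is to reduce Lemma \ref{lemma csit} to Lemma \ref{lemma csg} by expressing the free wave $v=\mathbf{S}_L(0,u_1)$ through its radiation profile.

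\emph{Radiation profile of the data.} By the explicit formula \eqref{initial data by radiation profile}, for initial data $(0,u_1)$ the profile $G=G_-$ satisfies
\[
\int_{-r}^r G(s)\,{\rm d}s = 0 \quad \hbox{for all } r>0, \qquad u_1(r)=\frac{G(r)-G(-r)}{r}.
\]
The first identity forces $G$ to be odd, i.e. $G(-s)=-G(s)$. Substituting this into the second gives
\[
G(r)=\tfrac12\, r\,u_1(r), \qquad r>0 .
\]
Because $u_1$ is supported in $\{|x|>R\}$, we get $G(s)=0$ for $|s|<R$.

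\emph{Size of the profile.} A direct computation gives
\[
\|G\|_{L^2(\Rm)}^2 = 2\int_R^\infty \tfrac14 r^2 |u_1(r)|^2\,{\rm d}r = \frac{1}{8\pi}\|u_1\|_{L^2(\Rm^3)}^2 .
\]
This is consistent with the isometry in Theorem \ref{radiation}: $\sqrt{2}\,\|G\|_{L^2(\Rm\times\mathbb{S}^2)}=\|(0,u_1)\|$, and the $\mathbb{S}^2$ integral contributes the factor $4\pi$. So $\|G\|_{L^2(\Rm)}\lesssim_1 \|u_1\|_{L^2}$.

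\emph{Conclusion.} The free wave $v$ has a radiation profile $G$ vanishing on $|s|<R$, and $0\le r_1<r_2\le R$. Lemma \ref{lemma csg} therefore applies directly and gives
\[
\|\chi_{r_1,r_2} v\|_{Y(\Rm)} \lesssim_1 \left(\frac{r_2-r_1}{R}\right)^{1/10}\|G\|_{L^2(\Rm)} \lesssim_1 \left(\frac{r_2-r_1}{R}\right)^{1/10}\|u_1\|_{L^2(\Rm^3)}.
\]

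The only delicate step is the first one: getting the sign and oddness conventions of \eqref{initial data by radiation profile} right, so that the support condition on $u_1$ really transfers to $G$ on the whole set $\{|s|<R\}$. As a sanity check that avoids the radiation profile formalism, one can use Kirchhoff's formula: $v(r,t)=\frac{1}{2r}\int_{|t-r|}^{t+r}\rho\, u_1(\rho)\,{\rm d}\rho$ for $t>0$. This vanishes whenever $t+r<R$, which matches $G=0$ on $(-R,R)$ through $v(r,t)=\frac1r\int_{t-r}^{t+r}G$.
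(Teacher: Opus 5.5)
Your proof is correct, and it takes a slightly different route from the paper's.

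The paper argues directly. It writes $v(r,t)=\frac{1}{2r}\int_{r-t}^{r+t}s\,u_1(s)\,{\rm d}s$ for $r>|t|$. From Cauchy--Schwarz and the support of $u_1$ it derives the pointwise bound $|v(r,t)|\lesssim_1\min\{r^{-1/2},R^{-1/2}\}\|u_1\|_{L^2(\Rm^3)}$. It then repeats the integration from the proof of Lemma~\ref{lemma csg}.

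You instead identify the radiation profile of $(0,u_1)$ as the odd function $G(s)=\frac12 s\,u_1(|s|)$, and then invoke Lemma~\ref{lemma csg} as a black box. The underlying mechanism is the same: the paper's formula is exactly $\frac1r\int_{t-r}^{t+r}G(s)\,{\rm d}s$ for your $G$, as your Kirchhoff check confirms.

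Your route buys two things. It shows that Lemma~\ref{lemma csit} is literally a special case of Lemma~\ref{lemma csg}, and it gives the exact constant $\|G\|_{L^2(\Rm)}=(8\pi)^{-1/2}\|u_1\|_{L^2(\Rm^3)}$. The price is reliance on the conventions in \eqref{initial data by radiation profile}. In particular, oddness of $G$ follows from $u_0=0$ by differentiating $\int_{-r}^r G=0$, which holds almost everywhere. You handled these conventions correctly.

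The paper's version is self-contained: it needs only the elementary formula for $\mathbf{S}_L(0,u_1)$ and never touches the radiation-profile formalism.
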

\begin{proof}
 We recall the explicit formula 
 \[
  v(r,t) = \frac{1}{2r} \int_{r-t}^{r+t} s u_1(s) {\rm d} s, \qquad r>|t|. 
 \]
 It follows that 
 \[
  |v(r,t)| \lesssim_1 \frac{|t|^{1/2}}{r} \|su_1(s)\|_{L^2(\Rm^+)} \lesssim_1 r^{-1/2} \|u_1\|_{L^2(\Rm^3)}. 
 \]
 Again we always have $v(r,t)=0$ if $|t|<r<R/2$. Thus we also have 
 \[
  |v(r,t)| \lesssim_1 \min\left\{r^{-1/2}, R^{-1/2}\right\} \|u_1\|_{L^2(\Rm^3)}, \qquad r>|t|. 
 \]
 A similar calculation to the proof of Lemma \ref{lemma csg} then completes the proof. 
\end{proof}

\begin{corollary} \label{lemma split data}
 Assume that $0\leq r_1 < r_2 \leq R$. Let $F\in L^1 L^2(\Rm \times \Rm^3)$ be radial and supported in the region $\Omega_R$. Then the solution $v$ to the linear free wave  
 \[
  \left\{\begin{array}{ll} \square v = F, & (x,t) \in \Rm^3 \times \Rm; \\ (v,v_t)|_{t=0} = (0,0) & \end{array}\right.
 \]
 satisfies 
 \[
  \|\chi_{r_1,r_2} v\|_{Y(\Rm)} \lesssim_1 \left(\frac{r_2-r_1}{R}\right)^\frac{1}{10} \|F\|_{L^1 L^2(\Rm \times \Rm^3)}. 
 \]
\end{corollary}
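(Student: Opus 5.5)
The plan is to reduce Corollary \ref{lemma split data} to Lemma \ref{lemma csit} by Duhamel's formula combined with the triangle inequality in the $Y$ norm (Minkowski). First, write
\[
 v(\cdot,t) = \int_0^t \frac{\sin (t-t')\sqrt{-\Delta}}{\sqrt{-\Delta}} F(\cdot,t') {\rm d} t' .
\]
For each fixed $t'$, let $v_{t'}$ be the free wave defined on all of $\Rm^3\times\Rm$ that solves $\square v_{t'}=0$ with data $(0,F(\cdot,t'))$ at time $t'$. Set $\tilde v_{t'}(x,t) = v_{t'}(x,t)$ for $t$ between $0$ and $t'$ in the appropriate direction, and $0$ otherwise, so that $v = \pm\int \tilde v_{t'}\, {\rm d}t'$. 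By Minkowski's inequality it then suffices to show, for almost every $t'$,
\[
 \|\chi_{r_1,r_2} v_{t'}\|_{Y(\Rm)} \lesssim_1 \left(\frac{r_2-r_1}{R}\right)^{1/10} \|F(\cdot,t')\|_{L^2(\Rm^3)} ,
\]
since truncating in time only decreases the $Y$ norm. Integrating this bound in $t'$ then gives the claim.

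To prove the fixed-$t'$ bound, use the support condition: $F(\cdot,t')$ is supported in $\{|x|>R+|t'|\}$. Translate time by $t'$, writing $w(x,s)=v_{t'}(x,s+t')$, which is the free wave $\mathbf{S}_L(0,F(\cdot,t'))$ with data supported in $|x|>R+|t'|$. The region $\{r_1+|t|<|x|<r_2+|t|\}$ becomes $\{r_1+|s+t'|<|x|<r_2+|s+t'|\}$ in the new time variable. This is not the symmetric channel centered at $s=0$, so Lemma \ref{lemma csit} does not apply verbatim, and this mismatch is the main obstacle.

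I would handle it by redoing the pointwise argument of Lemma \ref{lemma csit} directly rather than citing it. Set $R' = R+|t'|$. The explicit formula gives $|w(r,s)|\lesssim_1 r^{-1/2}\|F(t')\|_{L^2}$, and by finite speed of propagation $w(r,s)=0$ whenever $r<R'-|s|$. On the channel we have $r \ge |s+t'|+r_1$, and since $r\le r_2+|t| \le R+|t|$ we would otherwise be in the vanishing region. Concretely, the nonzero part requires $r \ge R'-|s|$, i.e. $r+|s|\ge R+|t'|$, so in original time $r+|t-t'| \ge R+|t'|$. Combined with $r\le r_2+|t|$, this forces $|t|+|t-t'|\ge R-r_2+|t'| \ge |t'|$, which is automatic and therefore not directly useful. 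The cleaner route is to argue, as in the lemma, that on the support we have $|v_{t'}(r,t)|\lesssim \min\{r^{-1/2},R^{-1/2}\}\|F(t')\|_{L^2}$ for $r>|t|$. Indeed, if $r<R/2$ and $r>|t|$, then $r+|t-t'|\le r+|t|+|t'|< R+|t'|$, so $v_{t'}(r,t)=0$.

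With this pointwise bound in hand, the computation is identical to the one in Lemma \ref{lemma csg}. Split into $|t|<R$, where the bound $R^{-5}$ and a channel of width $r_2-r_1$ give a contribution $\lesssim (r_2-r_1)^{1/2}R^{-1/2}$, and $|t|>R$, where the bound $r^{-3}$ gives $\int_{|t|>R}(r_2-r_1)^{1/2}|t|^{-3/2}{\rm d}t \lesssim (r_2-r_1)^{1/2}R^{-1/2}$. Taking the fifth root yields the fixed-$t'$ estimate. Integrating in $t'$ via Minkowski then completes the proof.
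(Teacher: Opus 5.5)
Your proposal is correct and follows the paper's route. The paper's proof is Duhamel's formula together with Lemma \ref{lemma csit}, and you use the same ingredients: Duhamel, Minkowski's inequality in the $Y$ norm, and the pointwise bound $|v_{t'}(r,t)|\lesssim_1 \min\{r^{-1/2},R^{-1/2}\}\|F(\cdot,t')\|_{L^2}$ for $r>|t|$ that underlies Lemma \ref{lemma csit}. Your extra step for data prescribed at time $t'$ rather than $0$ is the check that $v_{t'}$ still vanishes when $|t|<r<R/2$, because $F(\cdot,t')$ is supported in $|x|>R+|t'|$. That fills in a detail the paper's one-line citation of Lemma \ref{lemma csit} leaves implicit.
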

\begin{proof}
 This is a direct consequence of Lemma \ref{lemma csit} and Duhamel's formula 
 \[
  v(\cdot,t) = \int_{0}^t \mathbf{S}_L(t-t') (0,F(\cdot,t')) {\rm d} t'. 
 \]
\end{proof}

Finally we give a few estimates for the norm of a free wave, as well as its interaction strength with ground states, in term of its radiation concentration. 
\begin{lemma} \label{lemma vL bound}
 Let $v_L$ be a radial linear free wave with radiation profile $G(s)$ and $\lambda\in \Rm^+$. We define 
 \[
  \tau = \left(\sup_{0<r<\lambda} \frac{\lambda}{r} \int_{-r}^r |G(s)|^2 {\rm d} s\right)^{1/2}  + \sup_{r>0} \frac{1}{r^{1/2}} \int_{-r}^r |G(s)| {\rm d} s. 
 \]
 Then 
 \begin{itemize}
  \item For $0\leq r_1 < r_2 \leq \lambda$, we have 
  \[
   \|\chi_{r_1,r_2} v_L\|_{Y(\Rm)} \lesssim_1 \left(\frac{r_2-r_1}{\lambda}\right)^{1/10} \tau. 
  \]
  \item If $1\leq r_1 < r_2 \leq \lambda$, then 
  \[
   \left\|\chi_{r_1,r_2} W^4 v_L\right\|_{L^1 L^2} \lesssim_1 \tau \lambda^{-1/2} r_1^{-3/2}. 
  \]
  \item If $0\leq r_1 < r_2 \leq 2 < \lambda$, then 
  \[
   \left\|\chi_{r_1,r_2} W^4 v_L\right\|_{L^1 L^2} \lesssim_1 \tau \lambda^{-1/2} (r_2-r_1)^{1/2}.
  \]
 \end{itemize}
\end{lemma}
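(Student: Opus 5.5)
We will work entirely from the explicit formula \eqref{basic identity radiation free wave}, $v_L(r,t) = \frac{1}{r}\int_{t-r}^{t+r} G(s)\,{\rm d}s$, and first derive pointwise bounds in the exterior region $r>|t|$ in terms of $\tau$. There are two such bounds.
\begin{itemize}
 \item Since $|t|<r$, we have $[t-r,t+r]\subset[-2r,2r]$. The second part of $\tau$ then gives $|v_L(r,t)| \le r^{-1}\int_{-2r}^{2r}|G| \lesssim_1 \tau r^{-1/2}$ for every $r>|t|$.
 \item If $r+|t| \le \lambda$, Cauchy--Schwarz together with the first part of $\tau$ (applied with radius $r+|t|<\lambda$) gives
 \[
  |v_L(r,t)| \le r^{-1}(2r)^{1/2}\Big(\int_{-(r+|t|)}^{r+|t|}|G|^2\Big)^{1/2} \lesssim_1 r^{-1/2}\,\tau\,\Big(\frac{r+|t|}{\lambda}\Big)^{1/2}.
 \]
 Since $r+|t|\simeq r$ when $r>|t|$, this reads $|v_L(r,t)|\lesssim_1 \tau\lambda^{-1/2}$.
\end{itemize}
Combining the two, for $r>|t|$ we get
\[
 |v_L(r,t)| \lesssim_1 \tau \min\{\lambda^{-1/2}, r^{-1/2}\},
\]
where the first option is used when $r<\lambda/2$ and the second otherwise. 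This is exactly the bound $\min\{r^{-1/2},R^{-1/2}\}$ that appeared in Lemma \ref{lemma csg}, with $R=\lambda$ and $\|G\|_{L^2}$ replaced by $\tau$.

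For the first bullet, we repeat verbatim the computation in the proof of Lemma \ref{lemma csg}. We split the time integral into $|t|<\lambda$, where we use the bound $\lambda^{-1/2}$ and the $r$-integral over an interval of length $r_2-r_1$ with $r^2\lesssim \lambda^2$, and $|t|>\lambda$, where we use $r^{-1/2}$. This yields $\|\chi_{r_1,r_2}v_L\|_Y^5\lesssim_1 ((r_2-r_1)/\lambda)^{1/2}\tau^5$. One point needs checking for $|t|<\lambda$: the radius $r\le |t|+r_2$ may reach up to $2\lambda$, where only the $r^{-1/2}$ bound is available. Since $r^{-1/2}\lesssim\lambda^{-1/2}$ for $r\simeq\lambda$, this causes no loss.

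For the remaining two bullets, we bound the $L^1L^2$ norm directly. Using $W^4\lesssim_1 \min\{1,r^{-4}\}$ and the pointwise bound above,
\[
 \|\chi_{r_1,r_2}W^4v_L\|_{L^1L^2} \lesssim_1 \int_{\Rm}\Big(\int_{|t|+r_1}^{|t|+r_2} W^8\, |v_L|^2 r^2\, {\rm d}r\Big)^{1/2}{\rm d}t .
\]

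For the second bullet ($1\le r_1$), we use $W^8r^2\lesssim r^{-6}$ and $|v_L|\lesssim \tau\lambda^{-1/2}$ when $|t|<\lambda$, and $|v_L|\lesssim\tau r^{-1/2}$ when $|t|>\lambda$.
\begin{itemize}
 \item The range $|t|<\lambda$ contributes at most
 \[
  \tau\lambda^{-1/2}\int\Big(\int_{|t|+r_1}^\infty r^{-6}\,{\rm d}r\Big)^{1/2}{\rm d}t \lesssim \tau\lambda^{-1/2}\int (|t|+r_1)^{-5/2}\,{\rm d}t \lesssim \tau\lambda^{-1/2}r_1^{-3/2}.
 \]
 \item The range $|t|>\lambda$ contributes at most $\tau\int_{|t|>\lambda}|t|^{-3}\,{\rm d}t\lesssim\tau\lambda^{-2}$. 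Since $r_1\le\lambda$, this is bounded by $\tau\lambda^{-1/2}r_1^{-3/2}$.
\end{itemize}

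For the third bullet ($r_2\le2<\lambda$), we split the time integral in the same way.
\begin{itemize}
 \item On $|t|<1$, we use $W^8 r^2\lesssim 1$ together with the interval length $r_2-r_1$ to get $\lesssim \tau\lambda^{-1/2}(r_2-r_1)^{1/2}$.
 \item On $1<|t|<\lambda$, we use $r^{-6}$ over an interval of length $r_2-r_1$, which gives $\tau\lambda^{-1/2}(r_2-r_1)^{1/2}\int |t|^{-3}\,{\rm d}t$.
 \item On $|t|>\lambda$, we get $\tau (r_2-r_1)^{1/2}\int_{|t|>\lambda}|t|^{-7/2}\,{\rm d}t$, which is of smaller order.
\end{itemize}

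The only genuinely delicate step is the pointwise bound: all three conclusions reduce to routine integrals once it is in place. Within it, the point to check carefully is that the support interval $[t-r,t+r]$ lies inside $[-(r+|t|),r+|t|]$. This inclusion is what lets the maximal-type quantity in $\tau$ be applied with radius $\simeq r$ in the exterior region, including for negative times. Everything else follows the pattern of Lemma \ref{lemma csg} and Lemma \ref{L1L2 channel estimate mixed}.
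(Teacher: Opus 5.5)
Your proposal is correct and follows essentially the same route as the paper: you derive the pointwise bound $|v_L(r,t)|\lesssim_1 \tau\min\{\lambda^{-1/2},r^{-1/2}\}$ for $r>|t|$ from the explicit formula, and then carry out the same direct integrations. Your extra time splits at $|t|=\lambda$ in the last two bullets are harmless but unnecessary, since the minimum is always at most $\lambda^{-1/2}$; the paper simply uses $|v_L|\lesssim_1\tau\lambda^{-1/2}$ for all $t$ there.
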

\begin{proof}
 First of all, we give a point-wise estimate of $v_L(r,t)$ for $r>|t|$. If $r+|t|<\lambda$, then 
 \begin{align*}
  |v_L(r,t)| & = \frac{1}{r} \left| \int_{t-r}^{t+r} G(s) {\rm d} s\right|  \lesssim_1 \frac{1}{r^{1/2}} \|G\|_{L^2(t-r,t+r)} \\
  & \lesssim_1 \frac{1}{r^{1/2}} \|G\|_{L^2(-|t|-r,|t|+r)} \lesssim_1 \frac{1}{r^{1/2}} \cdot \left(\frac{|t|+r}{\lambda}\right)^{1/2} \tau \lesssim_1 \frac{\tau}{\lambda^{1/2}}. 
 \end{align*}
 On the other hand, we always have 
 \[
  |v_L(r,t)| \leq  \frac{1}{r} \int_{t-r}^{t+r} |G(s)| {\rm d} s \lesssim_1 \frac{1}{r^{1/2} (|t|+r)^{1/2}}  \int_{-|t|-r}^{|t|+r} |G(s)| {\rm d} s \lesssim_1 \frac{\tau}{r^{1/2}}.
 \] 
 In summary we have 
 \[
  |v_L(r,t)| \lesssim_1 \min\left\{\lambda^{-1/2}, r^{-1/2}\right\} \tau, \qquad r>|t|. 
 \]
 Thus we may conduct a direct calculation for $0\leq r_1 < r_2 \leq \lambda$
 \begin{align*}
  \|\chi_{r_1,r_2} v_L\|_{Y(\Rm)}^5 & \lesssim_1 \int_{\Rm} \left(\int_{r_1+|t|}^{r_2+|t|} \min\left\{\lambda^{-5}, r^{-5}\right\}\tau^{10} \cdot r^2 {\rm d}r \right)^{1/2} {\rm d} t\\
  & \lesssim_1 \int_{-\lambda}^\lambda \left(\int_{r_1+|t|}^{r_2+|t|} \lambda^{-5} \tau^{10} \cdot r^2 {\rm d}r \right)^{1/2} {\rm d} t + \int_{|t|>\lambda} \left(\int_{r_1+|t|}^{r_2+|t|} r^{-3} \tau^{10} {\rm d}r \right)^{1/2} {\rm d} t \\
  & \lesssim_1 \left(\frac{r_2-r_1}{\lambda}\right)^{1/2} \tau^5.
 \end{align*}
 Next we assume that $1\leq r_1 < r_2 \leq \lambda$. A direct calculation shows that 
 \begin{align*}
  \left\|\chi_{r_1,r_2} W^4 v_L\right\|_{L^1 L^2} & \lesssim_1 \int_{\Rm} \left(\int_{r_1+|t|}^{r_2+|t|} \min\left\{\lambda^{-1}, r^{-1}\right\}\tau^{2} r^{-8} \cdot r^2 {\rm d}r \right)^{1/2} {\rm d} t\\
  & \lesssim_1 \int_{\Rm} \left(\int_{r_1+|t|}^{r_2+|t|} \lambda^{-1} \tau^{2} r^{-6} {\rm d}r \right)^{1/2} {\rm d} t \\
  & \lesssim_1 \int_{\Rm} \lambda^{-1/2} \tau (r_1+|t|)^{-5/2} {\rm d} t\\
  & \lesssim_1 \lambda^{-1/2} \tau r_1^{-3/2}.
 \end{align*}
 Finally if $0\leq r_1 < r_2 \leq 2 < \lambda$, then we have 
  \begin{align*}
  \left\|\chi_{r_1,r_2} W^4 v_L\right\|_{L^1 L^2} & \lesssim_1 \int_{\Rm} \left(\int_{r_1+|t|}^{r_2+|t|} \min\left\{\lambda^{-1}, r^{-1}\right\}\tau^{2} W(r)^8 \cdot r^2 {\rm d}r \right)^{1/2} {\rm d} t\\
  & \lesssim_1 \int_{-1}^1 \left(\int_{r_1+|t|}^{r_2+|t|} \lambda^{-1} \tau^{2} {\rm d}r \right)^{1/2} {\rm d} t + \int_{|t|>1} \left(\int_{r_1+|t|}^{r_2+|t|} \lambda^{-1} \tau^{2} r^{-6} {\rm d}r \right)^{1/2} {\rm d} t \\
  & \lesssim_1 \int_{-1}^1 \lambda^{-1/2} \tau (r_2-r_1)^{1/2} {\rm d} t + \int_{|t|>1} \left(\frac{\lambda^{-1} \tau^2 (r_2-r_1)}{(r_1+|t|)^6}\right)^{1/2} {\rm d} t\\
  & \lesssim_1 \tau \lambda^{-1/2} (r_2-r_1)^{1/2}.
 \end{align*}
\end{proof} 

Finally we give a Strchartz estimate with highly localized radiation profile. 
\begin{lemma}[see Lemma 5.1 of Shen \cite{dynamics3d}] \label{calculation 1}
 Let $v_L$ be a radial free wave and $I = [a,b] \subset \Rm^+$ be an interval. Then 
 \[
  \|\chi_0 v_L\|_{Y(\Rm)} \lesssim_1  \|G_+\|_{L^2(\Rm\setminus I)} + \left(\frac{b-a}{a}\right)^{1/2} \|G_+\|_{L^2(I)}. 
 \]
 Here $G_+$ is the radiation profile of $v_L$ in the positive time direction. 
 \end{lemma}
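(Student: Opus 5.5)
The estimate in Lemma \ref{calculation 1} is linear in the radiation profile, so the plan is to split $v_L$ according to a splitting of $G_+$. Write $G_+ = G_+\mathbf{1}_{\Rm\setminus I} + G_+\mathbf{1}_I$, and let $v_1, v_2$ be the radial free waves whose positive-direction radiation profiles are these two pieces. Theorem \ref{radiation} says the map from data to $\sqrt{2}G_+$ is a bijective isometry, so both free waves are well defined and $v_L = v_1+v_2$. For the first piece, the classical Strichartz estimate together with the isometry gives
\[
\|\chi_0 v_1\|_{Y(\Rm)} \le \|v_1\|_{Y(\Rm)} \lesssim_1 \|\vec v_1(0)\|_{\mathcal H}\simeq \|G_+\|_{L^2(\Rm\setminus I)} .
\]
The whole proof therefore reduces to showing
\[
\|\chi_0 v_2\|_{Y(\Rm)}\lesssim_1 \left(\frac{b-a}{a}\right)^{1/2}\|G\|_{L^2(I)}
\]
for a profile $G=G_+$ supported in $I=[a,b]\subset\Rm^+$.

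If $b-a\ge a/2$, the factor $((b-a)/a)^{1/2}$ is bounded below, and the bound follows again from plain Strichartz. So assume $\delta:=b-a< a/2$. Using \eqref{basic identity radiation free wave}, I pass to the negative-direction profile $G_-(s)=-G_+(-s)$, which is supported in $[-b,-a]$. Then
\[
v_2(r,t)=\frac1r\int_{t-r}^{t+r}G_-(s)\,{\rm d}s .
\]
For $r>|t|$, the interval $(t-r,t+r)$ contains $0$, so it meets $[-b,-a]$ only when $t-r<-a$. There are two regimes:
\begin{itemize}
\item If $t-r\le -b$, the whole support lies inside the integration range. Then $v_2(r,t)=r^{-1}\int G_-$, and Cauchy--Schwarz gives $|v_2|\le r^{-1}\delta^{1/2}\|G\|_{L^2}$.
\item If $-b<t-r<-a$, a partial integral appears. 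It is bounded by $r^{-1}\delta^{1/2}\|G\|_{L^2}$ as well, and it is supported in the thin region $r-t\in(a,b)$.
\end{itemize}
Combining these, $|\chi_0 v_2|\lesssim r^{-1}\delta^{1/2}\|G\|_{L^2}$, supported where $r>|t|$ and $r-t>a$; in the region $r<a$ the solution vanishes.

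Next I compute the norm directly:
\[
\|\chi_0 v_2\|_Y^5\lesssim \delta^{5/2}\|G\|^5\int_{\Rm}\Bigl(\int_{\max(|t|,a+t)}^\infty r^{-8}\,{\rm d}r\Bigr)^{1/2}{\rm d}t \simeq \delta^{5/2}\|G\|^5\int_{\Rm}\max(|t|,a+t)^{-7/2}\,{\rm d}t .
\]
Because $\max(|t|,a+t)\ge a/2$, the time integral is $\lesssim a^{-5/2}$. This gives $\|\chi_0v_2\|_Y\lesssim (\delta/a)^{1/2}\|G\|_{L^2}$, which is the required bound.

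The main thing to check carefully is the support and sign bookkeeping in \eqref{basic identity radiation free wave}. The argument depends on two facts: the integration window $(t-r,t+r)$ never covers only part of $[-b,-a]$ outside the thin channel, and $\max(|t|,a+t)\gtrsim a$ on the whole region where $v_2\ne0$. Both should follow from $G_-$ being supported on the negative side at distance at least $a$ from the origin. Summing the estimates for $v_1$ and $v_2$ completes the proof of Lemma \ref{calculation 1}.
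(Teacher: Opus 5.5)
Your proof is correct. The paper gives no proof of this lemma; it only quotes Lemma 5.1 of \cite{dynamics3d}. Your argument uses the same explicit-formula-plus-direct-integration technique the paper uses for Lemma \ref{lemma csg} and Lemma \ref{lemma vL bound}, and it goes through:
\begin{itemize}
\item The piece with profile $G_+$ restricted to $\Rm\setminus I$ is handled by the energy Strichartz estimate together with the isometry of Theorem \ref{radiation}.
\item For the piece localized in $I$, \eqref{basic identity radiation free wave} with $G_-(s)=-G_+(-s)$ supported in $[-b,-a]$ gives the pointwise bound $|v_2(r,t)|\le r^{-1}(b-a)^{1/2}\|G_+\|_{L^2(I)}$. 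Inside $\Omega_0$ this is supported in $\{r>\max(|t|,t+a)\}$.
\item The time integral satisfies $\int_{\Rm}\max(|t|,a+t)^{-7/2}\,{\rm d}t\lesssim a^{-5/2}$, which produces exactly the factor $((b-a)/a)^{1/2}$.
\end{itemize}

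Two minor remarks. First, the case distinction $b-a\ge a/2$ is unnecessary: the pointwise bound and the time integral hold for every $b>a$. Second, your side comment that $v_2$ vanishes for $r<a$ holds only for $t\ge 0$. For $t\in(-a,0)$ the support reaches down to $r=a/2$; for example, $r=0.9a$, $t=-0.8a$ gives $r-t=1.7a>a$. You never use that comment, though, because your integral correctly starts at $\max(|t|,a+t)$.
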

 
\section{Soliton resolution estimates} 

The following proposition gives an instantaneous soliton resolution of a radial solution to (CP1), as well as some quantitative properties of the soliton resolution, as long as the solution is asymptotically equivalent to a free wave with a small Strichartz norm outside the main light cone. 

\begin{proposition} \label{main tool} 
 Let $n$ be a positive integer and $c_2 \gg 1$ be a sufficiently large constant. Then there exists a small constant $\delta_0 = \delta_0(n,c_2)>0$, such that if a radial exterior solution $u$ to (CP1) defined in $\Omega_0$ is asymptotically equivalent to a finite-energy free wave $v_L$ with $\delta \doteq \|\chi_0 v_L\|_{Y(\Rm)} < \delta_0$, then one of the following holds: 
 \begin{itemize}
  \item [(a)] There exists a sequence $(\zeta_j, \lambda_j)\in \{+1,-1\}\times \Rm^+$ for $j=1,2,\cdots,J$ with $0 \leq J\leq n-1$ such that  
  \begin{align*}
    \frac{\lambda_{j+1}}{\lambda_j}  \lesssim_{j,c_2} \delta^2, \qquad & j=1,2,\cdots, J-1; \\
   \left\|\vec{u}(\cdot, 0)-\sum_{j=1}^J \zeta_j (W_{\lambda_j},0) - \vec{v}_L(\cdot,0)\right\|_{\dot{H}^1\times L^2} & + \left\|\chi_0 \left(u - \sum_{j=1}^J \zeta_j W_{\lambda_j} \right)\right\|_{Y(\Rm)}  \lesssim_{J,c_2} \delta.
  \end{align*}
  \item[(b)] There exists a sequence $(\zeta_j, \lambda_j)\in \{+1,-1\}\times \Rm^+$ for $j=1,2,\cdots,n$ satisfying 
  \[
     \frac{\lambda_{j+1}}{\lambda_j} \lesssim_{j,c_2} \delta^2, \qquad j=1,2,\cdots, n-1;
  \]
  such that $u$ satisfies the following soliton resolution estimate in the exterior region  
  \begin{align*}
    \left\|\vec{u}(\cdot,0)-\sum_{j=1}^n \zeta_j (W_{\lambda_j},0) - \vec{v}_L(\cdot,0)\right\|_{\mathcal{H}(c_2 \lambda_n)} + \left\|\chi_{c_2 \lambda_n} \left(u - \sum_{j=1}^n \zeta_j W_{\lambda_j} \right)\right\|_{Y(\Rm)} & \lesssim_{n,c_2} \delta.
  \end{align*}
 \end{itemize}
 \end{proposition}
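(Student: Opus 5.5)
This proposition is essentially a restatement of the main quantitative result of \cite{dynamics3d}, and Section 3 is a review, so I would largely import that proof. The plan reconstructs it by induction on the number of bubbles, peeling off ground states from the outside in. The basic mechanism is that a radial exterior solution with small radiation must be close, outside a suitable radius, to a non-radiative exterior solution. Radial non-radiative solutions of (CP1) in $\Omega_R$ are classified by the channel of energy theory: they behave like $\ell/r$ for large $r$, and they are either $\pm W_\lambda$ or solutions determined by the parameter $\ell=\lim_{r\to\infty} r u_0(r)$.

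\textbf{Step 1 (first bubble).} I would consider the function $r\mapsto r(u_0-v_{L,0})(r)$ as $r$ decreases from $+\infty$. By the radiation-profile identity \eqref{radiation residue identity} and the small-data exterior theory, $\vec{u}-\vec{v}_L$ stays within $O(\delta)$ of a small non-radiative profile $\ell/r$ in $\mathcal{H}(R)$ for all $R$ such that $|\ell|R^{-1/2}$ remains small. Here Lemma \ref{lemma csg} and Corollary \ref{lemma split initial data} control $\chi_R v_L$, and the perturbation theory controls the nonlinear error.

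Let $R_1$ be the first radius where this smallness fails at a fixed threshold. If no such radius exists, I conclude alternative (a) with $J=0$. Otherwise the one-parameter family of non-radiative solutions with large $|\ell|R^{-1/2}$ must be matched. I would use the explicit radial classification, namely that non-radiative exterior solutions with the same $\ell$ coincide with $\zeta W_\lambda$ where $\zeta\sqrt3\lambda^{1/2}=\ell$. This identifies $(\zeta_1,\lambda_1)$ with $\lambda_1\simeq R_1$, and gives the estimate in $\mathcal{H}(c_2\lambda_1)$ with error $\lesssim\delta$. Taking $c_2$ large uses the decay of $W$ away from its core.

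\textbf{Step 2 (induction).} Next I would subtract $\zeta_1W_{\lambda_1}$ and linearize around it inside $r<c_2\lambda_1$. Near the origin at scales $\ll\lambda_1$, $W_{\lambda_1}$ is almost the constant $\sqrt3\lambda_1^{-1/2}$, and the remainder $w=u-\zeta_1W_{\lambda_1}$ again solves an equation whose non-radiative exterior solutions are $\pm W_\lambda$ plus small corrections. The interaction terms are controlled by Corollary \ref{global interaction bubbles} and Lemma \ref{L1L2 channel estimate mixed}, for example $\|W^4W_\lambda\|_{L^1L^2}\lesssim\lambda^{-1/2}$.

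Repeating Step 1 on the exterior of $c_2\lambda_2$ produces the next bubble or terminates. The scale separation $\lambda_{j+1}/\lambda_j\lesssim\delta^2$ comes from a matching condition. In the annulus $c_2\lambda_{j+1}<r<\lambda_j/c_2$, the tail $\zeta_{j+1}\sqrt3\lambda_{j+1}^{1/2}/r$ of the inner bubble must be absorbed by the radiation. In this region the channel of energy estimate yields $\lambda_{j+1}^{1/2}\lambda_j^{-1/2}\lesssim\delta$, hence $\lambda_{j+1}/\lambda_j\lesssim\delta^2$. After $n$ bubbles I stop and get alternative (b). If the process terminates earlier with $J\le n-1$ bubbles, the estimate extends down to $r=0$, giving the full $\dot H^1\times L^2$ bound in (a).

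\textbf{Main obstacle.} The hardest step is the uniform linear channel-of-energy estimate for the operator $\partial_t^2-\Delta-5\sum_j W_{\lambda_j}^4$ in exterior regions. Its kernel of non-radiative solutions must be exactly the expected finite-dimensional family, with constants depending only on $j$ and $c_2$, so that the errors do not accumulate beyond $\lesssim_{n,c_2}\delta$. To get this, I would first prove the one-bubble estimate $\|\Pi^\perp\vec w(0)\|_{\mathcal{H}(R)}\lesssim\sum_\pm\lim_{t\to\pm\infty}\|\nabla_{t,x}w\|_{L^2(|x|>R+|t|)}$ for the linearization around $W$. Here $\Pi^\perp$ removes the modes $\Lambda W$ and $1/r$-type tails. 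I would then glue the bubbles using scale separation, with the refined Strichartz bounds of Subsection \ref{sec: refined Strichartz} handling the cross terms.
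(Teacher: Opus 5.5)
Like the paper, you ultimately defer to \cite{dynamics3d}. Your outside-in peeling, where each new scale is triggered by the first radius at which $r^{1/2}|u-v_L-\sum_{k<j}\zeta_kW_{\lambda_k}|(r,0)$ reaches a fixed threshold, is also how the scales arise there (Remark \ref{uniform scale ratio}). The real difference is the engine that carries the estimates inward through the bubble cores.

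You rely on a uniform linearized channel-of-energy estimate for $\partial_t^2-\Delta-5\sum_j W_{\lambda_j}^4$ modulo its non-radiative directions, in the spirit of \cite{oddhigh}. That is a heavy input, and it is exactly the step you leave as a plan. The mechanism of \cite{dynamics3d}, which this paper re-runs in Lemma \ref{key connection} and Corollary \ref{key connection complete}, uses only free-wave tools:
\begin{itemize}
\item The threshold is $c_2^{1/2}W(c_2)=(r^{1/2}W_{\lambda_j})|_{r=c_2\lambda_j}$, so $e_j=u-v_L-\sum_{k\le j}\zeta_kW_{\lambda_k}$ vanishes at $r=c_2\lambda_j$. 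This kills the residue term in \eqref{radiation residue identity}, i.e.\ the only non-radiative free mode $(1/r,0)$, so there is no kernel to project out.
\item Since $e_j$ is non-radiative, Lemma \ref{scatter profile of nonlinear solution} bounds its profile, channel by channel, by $\square e_j$.
\item The potential terms $W_{\lambda_k}^4e_j$ are absorbed by cutting the exterior into dyadic or thin channels on which $W_{\lambda_k}$ has small $L^5L^{10}$ norm (Lemma \ref{upper bound of W norm}).
\item The residue at the edge of each channel is recovered from the zero at $c_2\lambda_j$ and the profile in between.
\item Finite speed of propagation, together with Lemma \ref{lemma csg} and Corollaries \ref{lemma split initial data} and \ref{lemma split data}, lets the channel inequalities close, either by smallness or by a finite outside-in induction.
\end{itemize}

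The ratio bound is then immediate. Comparing the estimates at levels $j$ and $j+1$ in $\mathcal{H}(c_2\lambda_j)$ gives $\|W_{\lambda_{j+1}}\|_{\dot H^1(|x|>c_2\lambda_j)}\lesssim\delta$, i.e.\ $(\lambda_{j+1}/c_2\lambda_j)^{1/2}\lesssim\delta$. Your description of this step needs adjusting. In the annulus $c_2\lambda_{j+1}<r<\lambda_j/c_2$ the operator is essentially free and $1/r$ is its non-radiative mode, so the tail is invisible to radiation there. What forces it to be $O(\delta)$ is the normalization at $c_2\lambda_j$. In your framework, the corresponding ingredient would be the linearized estimate across the core of $W_{\lambda_j}$, where $1/r$ is no longer a kernel direction.

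Your route buys a more conceptual framework that is not tied to the explicit 3D radial formulas. The paper's buys elementarity and constants that visibly depend only on $(n,c_2)$. It also gives the exact normalization $e_j(c_2\lambda_j,0)=0$, which the paper uses later (Lemma \ref{key connection}, Proposition \ref{prop concentration}). If you chose scales by orthogonality conditions to suit a linearized estimate, you would have to re-normalize afterwards.

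Minor points:
\begin{itemize}
\item With this paper's $W$ one has $W_\lambda(r)\approx\lambda^{1/2}/r$, so $\ell=\zeta\lambda^{1/2}$, without the $\sqrt3$.
\item Comparing $u-v_L$ with $\ell/r$ is only accurate up to $O((\delta+|\ell|R^{-1/2})^5)$, which is not $O(\delta)$near the threshold. The $O(\delta)$ bound at $c_2\lambda_1$ therefore needs the comparison with $\zeta W_\lambda$ itself.
\item For the linearization around a single $W$ on $\mathcal{H}(R)$, $\Lambda W$ behaves like a multiple of $1/r$ at infinity. So in your one-bubble estimate, ``$\Lambda W$ and $1/r$-type tails'' amount to a single exceptional direction, not two.
\end{itemize}
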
 

\begin{remark} 
 Please refer to \cite{dynamics3d} for the proof of this proposition. Please note that the proposition here is slightly different from the original one (Proposition 4.1 in \cite{dynamics3d}) in three aspects 
 \begin{itemize}
  \item In Proposition 4.1 of \cite{dynamics3d} we fix an absolute constant $c_2$, thus the implicit constants in the inequalities there only depends on the integers $j, J$ or $n$. In the further argument of this work we probably have to choose a larger parameter $c_2$ than the original one used in \cite{dynamics3d}, thus we allow to choose any sufficiently large parameter $c_2$ here. A brief review of the proof given in \cite{dynamics3d} reveals that the proof applies to all large constants $c_2$. 
  \item Proposition 4.1 in \cite{dynamics3d} applies to all (weakly) asymptotically equivalent solutions of a free wave with a small Strichartz norm, even if those solutions are not necessarily defined in the whole region $\Omega_0$. For simplicity we only consider exterior solutions defined in the whole region $\Omega_0$ in this work. Please see Remark 4.2 of \cite{dynamics3d} for more details. 
  \item Instead of two parameters $\zeta$ and $\lambda$, the original proposition utilize a single parameter $\alpha$ to represent a ground state 
  \[
   W^\alpha = \frac{1}{\alpha} \left(\frac{1}{3} + \frac{|x|^2}{\alpha^4}\right)^{-1/2}, \qquad \alpha \in \Rm\setminus \{0\}. 
  \]
  These two ways of representation are completely equivalent. It is not difficult to see  
  \[
   W^\alpha = \zeta W_\lambda \quad \Longleftrightarrow \quad \alpha = \zeta \lambda^{1/2}.  
  \]
 \end{itemize}
\end{remark} 

\begin{remark} \label{uniform scale ratio}
 Please note that the scale parameters $\lambda_j$ not only depend on the exterior solution $u$, but also the choice of $c_2$. In fact, a brief review of the proof shows that the parameters $\lambda_j$ and $\zeta_j$ are determined inductively by the identities ($\lambda_0 = +\infty$)
 \begin{align*}
  \lambda_j & = c_2^{-1} \max\left\{r\in (0,\lambda_{j-1}): r^{1/2} \left|u(r,0) - v_L(r,0) - \sum_{k=1}^{j-1} \zeta_k W_{\lambda_k}(r)\right| = c_2^{1/2} W(c_2) \right\}; \\
  \zeta_j & = {\rm sign} \left(u(c_2 \lambda_j,0) - v_L(c_2 \lambda_j,0) - \sum_{k=1}^{j-1} \zeta_k W_{\lambda_k}(c_2 \lambda_j)\right); 
 \end{align*}
 which implies that 
 \[
  u(c_2 \lambda_j,0) - v_L(c_2 \lambda_j,0) - \sum_{k=1}^j W_{\lambda_k} (c_2 \lambda_j) = 0.  
 \]
 Nevertheless, the number of bubbles $J$, the signs $\zeta_j$ do not depend on the choice of the large parameter $c_2$, as long as $\delta < \delta(n,c_2)$ is sufficiently small. In addition, we may also choose the implicit constant in the ratio inequality 
 \[
  \frac{\lambda_{j+1}}{\lambda_j} \lesssim_j \delta^2, \qquad j=1,2,\cdots,J-1
 \]
 independent of $c_2$, under the same assumption. In fact, we may fix a large constant $c_2^\ast$ and consider another constant $c_2 > c_2^\ast$. According to Proposition \ref{main tool}, if $\delta < \delta(n, c_2, \varepsilon)$ is sufficiently small, where $\varepsilon$ is a parameter to be determined later, we may apply Proposition \ref{main tool} with each parameter $c_2, c_2^\ast$ and obtain
 \begin{align*}
   \left\|\vec{u}(\cdot, 0)-\sum_{j=1}^J \zeta_j (W_{\lambda_j},0) - \vec{v}_L(\cdot,0)\right\|_{\dot{H}^1\times L^2}  & \leq \varepsilon, & & \hbox{(Case a)} \\
   \left\|\vec{u}(\cdot, 0)-\sum_{j=1}^n \zeta_j (W_{\lambda_j},0) - \vec{v}_L(\cdot,0)\right\|_{\mathcal{H}(c_2 \lambda_n)} & \leq \varepsilon; & & \hbox{(Case b)}
 \end{align*}
 as well as
  \begin{align*}
   \left\|\vec{u}(\cdot, 0)-\sum_{j=1}^{J^\ast} \zeta_j^\ast (W_{\lambda_j^\ast},0) - \vec{v}_L(\cdot,0)\right\|_{\dot{H}^1\times L^2}  & \leq \varepsilon, & & \hbox{(Case a)} \\
   \left\|\vec{u}(\cdot, 0)-\sum_{j=1}^n \zeta_j^\ast (W_{\lambda_j^\ast},0) - \vec{v}_L(\cdot,0)\right\|_{\mathcal{H}(c_2^\ast \lambda_n^\ast)} & \leq \varepsilon; & & \hbox{(Case b)}
 \end{align*}
 with 
  \begin{align*}
  &\frac{\lambda_{j+1}}{\lambda_{j}} \leq \varepsilon^2; & &\frac{\lambda_{j+1}^\ast}{\lambda_{j}^\ast} \leq \varepsilon^2. 
 \end{align*}
 Here $(\zeta_j,\lambda_j)$ and $(\zeta_j^\ast, \lambda_j^\ast)$ are the signs and scales with parameters $c_2, c_2^\ast$, respectively.  A combination of the two soliton resolution representations yields that 
 \[
  \left\|\sum_{j=1}^J \zeta_j (W_{\lambda_j},0) - \sum_{j=1}^{J^\ast} \zeta_j^\ast (W_{\lambda_j^\ast},0)\right\|_{\mathcal{H}(R)} \leq 2 \varepsilon. 
 \]
 Here we let $J = n$ and/or $J^\ast = n$ if the corresponding soliton resolution is in case b. The radius $R$ is chosen to be 
 \begin{align*}
  R = \left\{\begin{array}{ll} +\infty, & J, J^\ast < n; \\ c_2 \lambda_n, & J=n, J^\ast < n; \\ c_2^\ast \lambda_n^\ast, & J<n, J^\ast =n; \\
  \max\{c_2 \lambda_n, c_2^\ast \lambda_n^\ast\}, & J=J^\ast=n. \end{array}\right.
 \end{align*}
  It is not difficult to see that if $\varepsilon = \varepsilon(n, c_2)$ is a sufficiently small constant, then we must have  
  \begin{align*}
   &J = J^\ast;& &\zeta_j = \zeta_j^\ast;& &\lambda_j \simeq_1 \lambda_j^\ast. 
  \end{align*}
  It immediately follows that 
  \[
   \frac{\lambda_{j+1}}{\lambda_j} \lesssim_1 \frac{\lambda_{j+1}^\ast}{\lambda_j^\ast} \lesssim_j \delta^2. 
  \]
 \end{remark}

\paragraph{J-bubble exterior solutions} Fix a constant $c_2$ as above. We call a radial exterior solution $u$ to (CP1) defined in the region $\Omega_0$ to be a $J$-bubble exterior solution if it satisfies the following conditions. 
\begin{itemize}
 \item $u$ is asymptotically equivalent to a free wave $v_L$ with $\delta \doteq \|\chi_0 v_L\|_{Y(\Rm)} < \delta_0(J+1,c_2)$. Here $\delta_0(J+1,c_2)$ is the constant given in Proposition \ref{main tool}. 
 \item The soliton resolution of $u$ given in Proposition \ref{main tool} comes with exactly $J$ bubbles. 
\end{itemize}
Please note that this definition only considers the instantaneous soliton resolution property given in Proposition \ref{main tool}, thus is different from the conception of $J$-bubble soliton resolution as time tends to a blow-up time or infinity. However, if $u$ is a radial global solution whose soliton resolution comes with $J$ bubbles as $t\rightarrow +\infty$, then the time-translated solution $u(\cdot, \cdot + t)$ must be a $J$-bubble exterior solution defined here when $t$ is sufficiently large, as shown in the last section of this work. The situation of type II blow-up solution is similar, if we apply a local cut-off technique when necessary. More details are given in the last section. 

\begin{remark}\label{square estimate}
 Let $u$ be a $J$-bubble exterior solution as defined above. Then we may define 
 \[
  w_\ast (x,t) = u(x,t) - \sum_{j=1}^J \zeta_j W_{\lambda_j} (x) - v_L(x,t),
 \]
 which means 
 \[
  \square w_\ast = F(u) -\sum_{j=1}^J \zeta_j F(W_{\lambda_j}),
 \]
 and obtain the following estimate if $\delta < \delta(J,c_2)$ is sufficiently small:
 \begin{align*}
  & \left\|\chi_0 \left(F(u) -\sum_{j=1}^J \zeta_j F(W_{\lambda_j})\right)\right\|_{L^1 L^2} \\
  & \leq \left\|\chi_0 \left(F(u) -F\left(\sum_{j=1}^J \zeta_j W_{\lambda_j}\right)\right)\right\|_{L^1 L^2} + \left\|\chi_0 \left(F\left(\sum_{j=1}^J \zeta_j W_{\lambda_j}\right) - \sum_{j=1}^J \zeta_j F(W_{\lambda_j})\right)\right\|_{L^1 L^2} \\
  & \lesssim_J \left(\|\chi_0 w_\ast\|_{Y(\Rm)}^4 + \|\chi_0 W\|_{Y(\Rm)}^4 + \|\chi_0 v_L\|_{Y(\Rm)}^4 \right) \left(\|\chi_0 w_\ast\|_{Y(\Rm)}+\|\chi_0 v_L\|_{Y(\Rm)}\right) \\
  & \quad  + \sum_{1\leq j < m\leq J} \left(\left\|\chi_0 W_{\lambda_j}^4 W_{\lambda_m}\right\|_{L^1 L^2} + \left\|\chi_0 W_{\lambda_j} W_{\lambda_m}^4 \right\|_{L^1 L^2}\right)\\
  & \lesssim_{J,c_2} \delta + + \sum_{1\leq j < m\leq J} \left(\frac{\lambda_m}{\lambda_j}\right)^{1/2} \\
  & \lesssim_{J,c_2} \delta. 
 \end{align*}
 Here we apply Corollary \ref{global interaction bubbles} and the dilation invariance. 
\end{remark}

\section{Linearized equation}

In this section we consider the (approximated) linearized equation of the error function 
\[
 w_\ast = u - v_L - \sum_{j=1}^J \zeta_j W_{\lambda_j}.
\]
Applying the operator $\square = \partial_t^2 - \Delta$ on both sides, we obtain 
\begin{align*}
 \square w_\ast & = \square u - \sum_{j=1}^J \zeta_j F(W_{\lambda_j}) \\
 & = F\left(w_\ast+ v_L + \sum_{j=1}^J \zeta_j W_{\lambda_j}\right)  - \sum_{j=1}^J \zeta_j F(W_{\lambda_j})\\ 
 & \approx 5 W_{\lambda_J}^4 w_\ast + 5 \zeta_{J-1} W_{\lambda_J}^4 W_{\lambda_{J-1}}\\
 & \approx 5 W_{\lambda_J}^4 w_\ast + 5 \sqrt{3} \zeta_{J-1} \lambda_{J-1}^{-1/2} W_{\lambda_J}^4. 
\end{align*}
Here we discard all insignificant terms and use the following approximation in the energy channel $|x|-|t| \simeq \lambda_{J}$
\[
 W_{\lambda_{J-1}} \approx \lambda_{J-1}^{-1/2}  \sqrt{3}.  
\]
 Since the ground states do not depend on the time and $w_\ast$ is not expected to send any radiation, we expect that the major part of $w_\ast$ is also independent of time. Therefore we are interested to the solution to the linear elliptic equation
\[ 
 -\Delta \varphi = 5 W^4 \varphi + 5 W^4
\]
Here we assume $\lambda_J = 1$ without loss of generality, by the natural dilation, and temporarily ignore the insignificant constant $\sqrt{3} \zeta_{J-1} \lambda_{J-1}^{-1/2}$. 

\begin{lemma} \label{linearized elliptic}
 There exist three constants $c_5 \gg 1$, $r_4 \ll 1$ and $\mu_0 \in \Rm \setminus \{0\}$, such that for any parameter $c > c_5$, the elliptic equation 
 \[
  -\Delta \varphi = 5 W^4 \varphi + 5 W^4
 \]
 admits a solution $\varphi \in \mathcal{C}^2 (\Rm^3 \setminus \{0\})$ satisfying the following conditions 
 \begin{itemize}
  \item $\varphi$ is a radially symmetric solution;
  \item $\varphi(c) = 0$; 
  \item $1/2 \leq r\varphi (r) /\mu_0 \leq 3/2$ for all $r\in (0,r_4)$; 
  \item $|\varphi(x)|\lesssim_1 |x|^{-1}$ for all $x\in \Rm^3 \setminus\{0\}$; here the implicit constant is independent of $c>c_5$; 
  \item $\varphi \in \dot{H}^1(\{x: |x|>r\})$ for any $r>0$.
 \end{itemize}
\end{lemma}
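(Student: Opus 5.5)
The plan is to write the general radial solution explicitly as a particular solution plus the two-dimensional homogeneous family, and then fix the two free coefficients: one by requiring decay at infinity, the other by requiring $\varphi(c)=0$.

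\emph{Particular and homogeneous solutions.} The constant $\varphi_p\equiv -1$ is a particular solution, since $-\Delta(-1)=0=-5W^4+5W^4$. For the homogeneous equation $-\Delta h = 5W^4 h$ with $h$ radial, we write $h=f(r)/r$, which reduces it to the ODE $f''+5W(r)^4 f=0$ on $(0,\infty)$. One solution comes from the dilation invariance: $\Lambda W=\frac12 W + rW'$. We set $f_1=r\Lambda W$. Then $f_1(0)=0$, $f_1'(0)=\Lambda W(0)=\frac12 W(0)=\frac{\sqrt3}{2}$, and $\Lambda W(r)= -\frac{1}{2r}+O(r^{-3})$ as $r\to\infty$. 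Since $f_1''=-5W^4 f_1=O(r^{-4})$, we get $f_1=-\frac12+O(r^{-2})$ and $f_1'=O(r^{-3})$ at infinity. We take a second solution $f_2$ with Wronskian $f_1f_2'-f_1'f_2=1$. At the origin this gives $f_2(0)=-1/f_1'(0)=-2/\sqrt3\neq 0$. At infinity, reduction of order $f_2=f_1\int^r f_1^{-2}$ gives $f_2=-2r+\beta+O(r^{-1})$ for some constant $\beta$. Setting $\psi=f_2/r$, we have $\psi\in\mathcal{C}^2(\Rm^3\setminus\{0\})$, $\psi(r)\sim f_2(0)/r$ near $0$, and $\psi=-2+\beta/r+O(r^{-2})$ near infinity. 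These asymptotics are routine ODE bookkeeping; the only care needed is that all error terms are independent of $c$.

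\emph{Choice of coefficients.} The general radial solution is $\varphi=-1+a\Lambda W+b\psi$. The constant term at infinity equals $-1-2b$, so we choose $b=-\frac12$. Then $\varphi_0:=-1-\frac12\psi$ satisfies $\varphi_0=O(r^{-1})$ at infinity, $\varphi_0'=O(r^{-2})$, and $r\varphi_0(r)\to\mu_0:=-\frac12 f_2(0)=1/\sqrt3\neq0$ as $r\to0$. Next, because $\Lambda W(c)=-\frac1{2c}(1+O(c^{-2}))\neq0$ for $c>c_5$, we set $a=a(c)=-\varphi_0(c)/\Lambda W(c)$. This enforces $\varphi(c)=0$. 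Since $|\varphi_0(c)|\lesssim c^{-1}$ and $|\Lambda W(c)|\gtrsim c^{-1}$, we get $|a(c)|\lesssim_1 1$ uniformly in $c>c_5$. This uniformity is the key point.

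\emph{Verification.} Each of $\varphi_0$ and $\Lambda W$ is bounded by $C|x|^{-1}$ on all of $\Rm^3\setminus\{0\}$ ($\Lambda W$ is even bounded). With $a$ uniformly bounded, this gives $|\varphi(x)|\lesssim_1|x|^{-1}$ with a constant independent of $c$. Near the origin, $r\varphi(r)=\mu_0+O(r)+a\,r\Lambda W(r)=\mu_0+O(r)$, again uniformly in $c$. Hence we can pick $r_4\ll1$ independent of $c$ so that $\frac12\le r\varphi(r)/\mu_0\le\frac32$ on $(0,r_4)$. Finally, $\varphi'=O(r^{-2})$ at infinity and $\varphi$ is $\mathcal{C}^2$ away from the origin, so $\nabla\varphi\in L^2(\{|x|>r\})$. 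Together with $\varphi\to0$ at infinity, this gives $\varphi\in\dot H^1(\{|x|>r\})$ for every $r>0$. The main obstacle is the uniform-in-$c$ control, namely the bound $|a(c)|\lesssim 1$. It rests on the precise decay $\varphi_0=O(r^{-1})$, which in turn needs the expansion $f_2=-2r+\beta+O(r^{-1})$ and not merely $f_2\sim-2r$.
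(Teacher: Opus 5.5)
Your proposal is correct, but it reaches the lemma by a somewhat different route than the paper.

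The paper works with $w=r\varphi$. It builds the decaying solution $w^\ast$ by a contraction-mapping argument on $[1,\infty)$, which gives the sharp asymptotics $w^\ast=-\frac52 r^{-1}+O(r^{-2})$, i.e.\ $\varphi^\ast=O(r^{-2})$. It then proves $\mu_0=w^\ast(0)\neq 0$ indirectly: every solution with $w(0)=0$ has the form $-r+Cv$ with $v=-2r\Lambda W$, and all of these tend to $-\infty$. Finally it adds $\beta v$ with $|\beta|\simeq c^{-1}$ to enforce $\varphi(c)=0$.

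You instead use variation of parameters, with the particular solution $-1$ and the Wronskian-normalized second solution $f_2$. Decay forces $b=-\frac12$, so $\mu_0=-\frac12 f_2(0)=1/(2f_1'(0))=1/\sqrt3$ comes out explicitly, with no fixed point and no contradiction argument. You can cross-check this value by integrating $(f_1w'-f_1'w)'=-5rW^4f_1$ over $(0,\infty)$. This gives $\tfrac{\sqrt3}{2}\mu_0=-5\int_0^\infty r^2W^4\Lambda W\,{\rm d}r=\tfrac12\int_0^\infty r^2W^5\,{\rm d}r=\tfrac12$.

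The price of your route is that $\varphi_0$ decays only like $r^{-1}$: it differs from the paper's optimally decaying solution by a fixed multiple of $\Lambda W$. Consequently $a(c)$ tends to a constant instead of being $O(c^{-1})$. It is also why you need the next-order term $f_2=-2r+\beta+O(r^{-1})$ at infinity, as you correctly point out. Boundedness of $a(c)$ is all the lemma requires, though. Both the uniform bound $|\varphi(x)|\lesssim_1|x|^{-1}$ and the $c$-independent choice of $r_4$ use only $|a(c)|\lesssim 1$.
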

\begin{proof}
 It follows from a direct calculation that $u$ satisfies the elliptic equation above if and only if $w(r) = r \varphi(r)$ satisfies the one-dimensional elliptic equation 
 \begin{equation} \label{one dimension elliptic} 
  - w_{rr} = 5 \left(\frac{1}{3} + r^2\right)^{-2} (w+r).
 \end{equation}
 We first construct a special solution $w^\ast$ to this equation with the best decay near the infinity. We consider the map $\mathbf{T} : C([1,+\infty)) \rightarrow C([1,+\infty))$:
 \[
  (\mathbf{T} w) (r)  = - 5 \int_r^\infty \int_s^\infty  \left(\frac{1}{3} + \tau^2\right)^{-2} (w(\tau)+\tau) {\rm d} \tau {\rm d} s. 
 \]
 A direct calculation shows that the norms in the Banach space $X = C([1,+\infty))$ satisfy 
 \begin{align*}
  \|\mathbf{T} w\|_{X} \leq \sup_{r\geq 1} \left(5  \int_r^\infty \int_s^\infty  \tau^{-4} (\|w\|_X +\tau) {\rm d} \tau {\rm d} s\right) \leq \frac{5}{2} + \frac{5}{6}\|w\|_X 
 \end{align*}
 and
 \begin{align*}
  \|\mathbf{T} w_1 - \mathbf{T} w_2 \|_{X} &\leq \sup_{r\geq 1} \left(5 \int_r^\infty \int_s^\infty  \tau^{-4} \|w_1 - w_2\|_X {\rm d} \tau {\rm d} s\right) \\
  & \leq \frac{5}{6} \|w_1 - w_2\|_X. 
 \end{align*}
 This verifies that $\mathbf{T}$ is a contraction map. The classic fixed-point argument immediately gives a solution $w^\ast$ to \eqref{one dimension elliptic} with the asymptotic behaviour 
 \begin{align*}
  & w^\ast(r) = - \frac{5}{2} r^{-1} + O(r^{-2}); & & w_r^\ast(r) = \frac{5}{2} r^{-2} + O(r^{-3}). 
 \end{align*}
 Since \eqref{one dimension elliptic} is a linear ordinary differential equation with bounded coefficients, the solution $w^\ast$ may extend to a solution defined in $[0,+\infty)$. Now we claim that $w^\ast(0) \neq 0$. Indeed, we observe that 
 \begin{itemize} 
  \item one of the solution to \eqref{one dimension elliptic} is $-r$, which is zero at the origin;
  \item $v = r\left(r^2 - \frac{1}{3}\right)\left(\frac{1}{3} + r^2\right)^{-3/2}$ satisfies the homogeneous differential equation 
  \[
   - v_{rr} = 5 \left(\frac{1}{3} + r^2\right)^{-2} v. 
  \]
 \end{itemize}
 As a result, all solutions to \eqref{one dimension elliptic} with $w(0) = 0$ can be given by 
 \[
  w(r) = -r +C v(r)\qquad \Longrightarrow \qquad \lim_{r\rightarrow +\infty} w(r) = -\infty. 
 \]
 The solution $w^\ast(r)$ is clearly not in this form, thus we must have $\mu_0 \doteq w^\ast (0) \neq 0$. This verifies our claim. Finally we may give the desired solution $w$ in the following form and let $\varphi(x) = |x|^{-1} w(|x|)$.
 \begin{align*}
  &w(r) = w^\ast (r) + \beta v(r), & & \beta = - \frac{w^\ast(c)}{v(c)}. 
 \end{align*}
 This clearly solves the equation \eqref{one dimension elliptic} with $w(c) = 0$ and $w\in \mathcal{C}^2(\Rm^+)$. By the asymptotic behaviour of $w^\ast$ and $v$, we may choose a sufficiently large absolute constant $c_5\gg 1$, such that  
 \[
  |\beta| \simeq_1 c^{-1}, \qquad c>c_5.
 \]
 This immediately implies that $|w(r)| \leq |w^\ast(r)| + |\beta| |v(r)|$ is uniformly bounded for all $c > c_5$ and $r>0$; and that 
 \[
  |w(r) - \mu_0| \leq  |w^\ast(r) - \mu_0| + |\beta| |v(r)| \leq  |w^\ast(r) - \mu_0| + \sqrt{3} |\beta| r <  |\mu_0|/2, \qquad \forall r \in (0,r_4),
 \]
 as long as the constant $r_4$ is sufficiently small. The first four properties of $\varphi(x) = |x|^{-1} w(|x|)$ immediately follows these properties of $w$. Finally the last property of $\varphi$ follows from the regularity $\varphi \in \mathcal{C}^2(\Rm\setminus \{0\})$ and the asymptotic behaviour
 \[
  |\varphi_r (r)| \leq \frac{1}{r}|w_r^\ast(r)| + \frac{1}{r^2}|w^\ast(r)| + |\beta|\left|\partial_r (r^{-1} v)\right| \lesssim_1 r^{-2}, \qquad r\gg 1. 
 \] 
\end{proof}

\begin{remark} \label{remark delta varphi}
 Let $\varphi$ be the solution given in the previous lemma. The equation $-\Delta \varphi = 5 W^4 \varphi + 5 W^4$ and the uniform upper bound $|\varphi(x)| \lesssim_1 |x|^{-1}$ implies that 
 \[
  \|\chi_0 \Delta \varphi\|_{L^1 L^2(\Rm \times \Rm^3)} \lesssim_1 1. 
 \]
 The upper bound is independent of large parameter $c > c_5$. 
\end{remark}

\begin{corollary}\label{w lambda varphi}
 Let $0\leq r_1<r_2\leq \lambda$. Then the solution $\varphi$ given in Lemma \ref{linearized elliptic} satisfies 
 \[
  \left\|\chi_{r_1,r_2} W_\lambda^4 \varphi\right\|_{L^1 L^2} \lesssim_1  \lambda^{-1} (r_2-r_1)^{1/2}.
 \]
 Please note that the implicit constant is independent of $c > c_3$. 
\end{corollary}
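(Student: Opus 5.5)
The plan is to reduce the estimate to the third bullet of Lemma \ref{L1L2 channel estimate mixed}, that is, to the bound $\|\chi_{r_1,r_2} W W_\lambda^4\|_{L^1L^2}\lesssim_1 \lambda^{-1}(r_2-r_1)^{1/2}$ for $0\le r_1<r_2\le\lambda$. Lemma \ref{linearized elliptic} gives the uniform pointwise bound $|\varphi(x)|\lesssim_1 |x|^{-1}$, with a constant independent of $c>c_5$. Since $W(r)=(1/3+r^2)^{-1/2}$, we have $r^{-1}\lesssim_1 W(r)$ only for $r\gtrsim 1$; near the origin $r^{-1}$ is larger than $W$. So the pointwise comparison $|\varphi|\lesssim W$ fails for small $r$, and I cannot simply quote the lemma. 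Instead I will redo the direct computation with $r^{-1}$ in place of $W$, and check that the singular region contributes nothing worse.

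Concretely, by time symmetry it suffices to bound
\[
\int_0^\infty\left(\int_{r_1+t}^{r_2+t} r^{-2}\,\lambda^{-4}\left(\frac13+\frac{r^2}{\lambda^2}\right)^{-4} r^2\,{\rm d}r\right)^{1/2}{\rm d}t .
\]
The factor $r^{-2}\cdot r^2=1$ removes the singularity at $r=0$ entirely. I will split the $t$-integral into two ranges.

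\textbf{Range $t\in(0,\lambda)$.} Here I use $W_\lambda^4\lesssim \lambda^{-2}$, so the inner integral is at most $\lambda^{-4}(r_2-r_1)$. This range contributes $\lambda\cdot\lambda^{-2}(r_2-r_1)^{1/2}=\lambda^{-1}(r_2-r_1)^{1/2}$.

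\textbf{Range $t>\lambda$.} Here $r\ge t>\lambda$, so I use $W_\lambda^8\lesssim \lambda^4 r^{-8}$. The inner integral is then at most $\lambda^4(r_2-r_1)t^{-8}$, and the range contributes
\[
\int_\lambda^\infty \lambda^2 (r_2-r_1)^{1/2} t^{-4}\,{\rm d}t\lesssim \lambda^{-1}(r_2-r_1)^{1/2}.
\]

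Adding the two ranges gives the claim. The only inputs are the $c$-independent bound on $\varphi$ and the elementary bounds on $W_\lambda$, so the constant is uniform in $c$.

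I do not expect a real obstacle. The one point needing care is the singularity of $\varphi$ at the origin: the weight $r^{-2}$ must be exactly cancelled by the volume factor $r^2$ before the inner integral is bounded by the interval length, which is why the hypothesis $r_2\le\lambda$ together with the splitting at $t=\lambda$ suffices.
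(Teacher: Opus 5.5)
Your proposal is correct and is essentially the paper's own proof. The paper likewise inserts $|\varphi|\lesssim_1 |x|^{-1}$ so that $r^{-2}\cdot r^2$ cancels, then splits the time integral at $|t|=\lambda$, using $W_\lambda^8\lesssim_1\lambda^{-4}$ on $|t|<\lambda$ and $W_\lambda^8\lesssim_1\lambda^{4}r^{-8}$ on $|t|>\lambda$ (and, contrary to your closing remark, neither argument actually needs the hypothesis $r_2\le\lambda$).
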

\begin{proof}
 Let us recall that $|\varphi(x)| \lesssim_1 |x|^{-1}$. Thus a direct calculation shows that 
 \begin{align*}
   \left\|\chi_{r_1,r_2} W_\lambda^4 \varphi\right\|_{L^1 L^2} & \lesssim_1 \int_{-\infty}^\infty \left(\int_{r_1+|t|}^{r_2+|t|} \frac{1}{\lambda^4} \left(\frac{1}{3} + \frac{r^2}{\lambda^2}\right)^{-4} r^{-2}\cdot r^2 {\rm d} r\right)^{1/2} {\rm d} t\\
   & \lesssim_1 \int_{-\lambda}^\lambda \left(\int_{r_1+|t|}^{r_2+|t|} \frac{1}{\lambda^4} {\rm d} r\right)^{1/2} {\rm d} t + \int_{|t|>\lambda} \left(\int_{r_1+|t|}^{r_2+|t|} \frac{\lambda^4}{r^8} {\rm d} r\right)^{1/2} {\rm d} t\\
   & \lesssim_1 \int_{-\lambda}^\lambda  \frac{(r_2-r_1)^{1/2}}{\lambda^2} {\rm d} t + \int_{|t|>\lambda} \frac{\lambda^2 (r_2-r_1)^{1/2}}{(r_1+|t|)^4} {\rm d} t\\
    & \lesssim_1 \lambda^{-1} (r_2-r_1)^{1/2}.
 \end{align*}
\end{proof}

\section{Bubble Interaction}

In this section we give a few approximation of a $J$-bubble exterior solution $u$ with weak radiation concentration, if such a solution existed. 

\begin{lemma} \label{key connection}
 Fix a positive integer $J\geq 2$. There exists an absolute constant $c_2$ and two small constants $c_1=c_1(J)$ and $\tau_0 = \tau_0 (J)>0$, such that if a radial exterior solution $u$ to (CP1) is a $J$-bubble exterior solution defined in Section 3 (with the parameter $c_2$) satisfying
 \begin{align*}
  \tau \doteq & \left(\sup_{0<r<\lambda_{J-1}} \!\!\! \frac{\lambda_{J-1}}{r} \int_{-r}^r |G(s)|^2 {\rm d} s\right)^{1/2}  + \|\chi_0 v_L\|_{Y(\Rm)} + \left\|\vec{u}(0) - \vec{v}_L(0) - \sum_{j=1}^J \zeta_j (W_{\lambda_j}, 0)\right\|_{\mathcal{H}(c_2 \lambda_{J})} \\
  & + \sup_{r>0} \frac{1}{r^{1/2}} \int_{-r}^r |G(s)| {\rm d} s + \left\|\chi_{c_2 \lambda_{J}}\left(F(u) - \sum_{j=1}^{J} \zeta_j F(W_{\lambda_j})\right)\right\|_{L^1 L^2(\Rm\times \Rm^3)} < \tau_0,
 \end{align*}
 where $G$ is the radiation profile of the asymptotic equivalent free wave $v_L$ of $u$; $\zeta_j$'s and $\lambda_j$'s are the corresponding signs and scales given by Proposition \ref{main tool}; then the error term 
 \[
  w = u - \sum_{j=1}^J \zeta_j W_{\lambda_j}(x) - \sqrt{3} \zeta_{J-1} \lambda_{J-1}^{-1/2} \varphi (x/\lambda_J) - v_L(x,t)
 \]
 and the radiation profile $G^\ast$ associated to $\vec{w}(0)$ satisfy the inequalities
 \begin{align*}
  \|G^\ast\|_{L^2(s: 2^k c_2 \lambda_J < |s| < 2^{k+1} c_2 \lambda_J)} + \|\chi_{2^k c_2 \lambda_J, 2^{k+1} c_2 \lambda_J} \square w\|_{L^1 L^2} & \leq 2^\frac{k-K}{2}\tau, \quad k=0,1,2,\cdots,K; \\
  \|\vec{w}(0)\|_{\mathcal{H}(c_1 \lambda_{J-1})} + \left\|\chi_{c_1 \lambda_{J-1}} \square w\right\|_{L^1 L^2} & \leq 2 \tau. 
 \end{align*}
 Here $\varphi(x)$ is the solution to the elliptic equation $-\Delta \varphi = 5 W^4 \varphi + 5W^4$ given in Lemma \ref{linearized elliptic} with the parameter $c= c_2$; $K$ is the minimal positive integer such that $2^{K+1} c_2 \lambda_J \geq c_1 \lambda_{J-1}$. 
\end{lemma}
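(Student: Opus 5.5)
The plan is to rescale so that $\lambda_J=1$ and set $\lambda\doteq\lambda_{J-1}/\lambda_J$ and $a\doteq\sqrt3\zeta_{J-1}\lambda_{J-1}^{-1/2}$. All smallness will come from two sources. First, Proposition \ref{main tool} gives $\lambda^{-1}\lesssim_J\delta^2\le\tau^2$, so every power $\lambda^{-\epsilon}$ may be absorbed into $\tau$. Second, $c_2$ may be taken large and then $c_1$ small. The key structural fact is the following. Since $\varphi$ is time independent with $|\nabla\varphi|\lesssim r^{-2}$ at infinity, $\varphi$ is non-radiative. Hence $w=w_\ast-a\varphi(\cdot/\lambda_J)$ is asymptotically equivalent to $0$ in $\Omega_0$, i.e.\ its nonlinear radiation profile vanishes for $s>0$. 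Lemma \ref{scatter profile of nonlinear solution}, applied to $w$ in $\Omega_{c_2}$, then yields for every channel
\[
 4\sqrt\pi\,\|G^\ast\|_{L^2(\{s:\,r_1<|s|<r_2\})}\le \|\chi_{r_1,r_2}\square w\|_{L^1L^2}.
\]
So everything reduces to bounding $\square w$ on the dyadic channels $\Omega_{2^kc_2,2^{k+1}c_2}$ and on $\Omega_{c_1\lambda}$.

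First I will write out $\square w$. Using $\square(a\varphi)=5aW^4\varphi+5aW^4$, I expand $F(u)-\sum_j\zeta_jF(W_{\lambda_j})$ around $\zeta_JW$:
\begin{align*}
 \square w ={}& 5W^4 w+5\zeta_{J-1}W^4\bigl(W_{\lambda}-\sqrt3\lambda^{-1/2}\bigr)+5W^4 v_L+5W^4\sum_{j<J-1}\zeta_jW_{\lambda_j}\\
 &+(\text{terms at least quadratic in } w_\ast,\ v_L,\ W_{\lambda_j}\ (j<J)),
\end{align*}
where $w_\ast=w+a\varphi$ inside the quadratic terms.

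The outer estimate on $\Omega_{c_1\lambda}$ is direct. The term $\|\vec w_\ast(0)\|_{\mathcal H(c_1\lambda)}$ and the $\chi_{c_1\lambda}$ part of $\square w_\ast$ are each already $\le\tau$ by hypothesis, since $c_1\lambda\ge c_2$. The correction $a\varphi$ contributes energy $\lesssim a^2/(c_1\lambda)\lesssim\lambda^{-2}/c_1$ on $\{|x|>c_1\lambda\}$. Its source $5aW^4(\varphi+1)$ has $L^1L^2$ norm $\lesssim\lambda^{-1/2}(c_1\lambda)^{-3/2}$ there. Both are $\ll\tau$ once $\delta_0$ is small relative to $c_1$. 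This gives the second inequality of the lemma.

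For the dyadic channels I will argue by downward induction on $k$, from $k=K$ to $k=0$. The induction hypothesis is that the bound holds on all channels $k'>k$ and on $\Omega_{c_1\lambda}$. On channel $k$, with $r\sim 2^kc_2$, each forcing term is estimated as follows.
\begin{itemize}
 \item The bubble-tail term $W^4(W_\lambda-\sqrt3\lambda^{-1/2})$ uses $|W_\lambda-\sqrt3\lambda^{-1/2}|\lesssim\lambda^{-5/2}r^2$ and $W^4\lesssim r^{-4}$, as in the proof of Lemma \ref{estimate W J lambda}. This gives a bound $\lesssim\lambda^{-5/2}2^{k/2}c_2^{1/2}$. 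Since $2^K\simeq c_1\lambda/c_2$, this equals $2^{(k-K)/2}\cdot O(\lambda^{-2})\ll2^{(k-K)/2}\tau$.
 \item The term $W^4v_L$ is handled by the second bullet of Lemma \ref{lemma vL bound}, giving $\tau\lambda^{-1/2}(2^kc_2)^{-3/2}$. This is $\le 2^{(k-K)/2}\tau\cdot c_2^{-1}c_1^{-1/2}$, which is small when $c_2\gg c_1^{-1/2}$.
 \item The larger bubbles $j<J-1$ and the quadratic terms are handled by Lemma \ref{L1L2 channel estimate mixed}, Corollary \ref{global interaction bubbles} with $\lambda_{j}/\lambda_{J-1}\lesssim\delta^2$, Lemma \ref{upper bound of W norm}, and Corollary \ref{w lambda varphi} for the $\varphi$ pieces.
\end{itemize}

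The remaining term, which I expect to be the main obstacle, is the linear one $5W^4w$ on channel $k$. To control it I need a bound on $w$ there. I will view $w$ in $\Omega_{2^kc_2}$ as an exterior solution: a free wave from $\vec w(0)$ plus a Duhamel term from $\chi\square w$. I then split $G^\ast$ into the inner piece on channel $k$ itself, the outer pieces $j>k$, the far piece $|s|>c_1\lambda$, and the residue $R^{-1/2}|\int_{-R}^RG^\ast|$; this is the splitting of Corollary \ref{lemma split initial data} combined with Corollary \ref{lemma split data}. The difficulty is that the Strichartz decay $\bigl((r_2-r_1)/R_j\bigr)^{1/10}$ only yields $\|\chi w\|_Y\lesssim2^{(k-K)/10}\tau$. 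This is not enough when multiplied by $\|\chi W\|_Y^4\lesssim(2^kc_2)^{-2}$ for small $k$. So instead I plan to use the pointwise bounds from the proofs of Lemmas \ref{lemma csg} and \ref{lemma csit}: a free wave with profile supported in $|s|>R_j$ satisfies $|v|\lesssim R_j^{-1/2}\|G\|$, and for channel $j$ this gives $\lesssim(2^jc_2)^{-1/2}2^{(j-K)/2}\tau$. I then pair these with $\|\chi_{r_1,r_2}W^4\|_{L^1L^2_{\mathrm{weighted}}}\sim2^{-3k/2}c_2^{-3/2}$.

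The self-interaction within channel $k$ will be closed by a small-constant fixed point, since $\|\chi W\|_Y^4\lesssim c_2^{-2}$ there. The factor $c_2^{-3/2}$, or a slightly worse power, must absorb the bookkeeping from summing over $j$. If a logarithmic loss in $K$ appears, I would replace the dyadic weight $2^{(k-K)/2}$ by a slightly stronger one inside the induction, for instance $2^{(k-K)/2}(1+K-k)^{-2}$. Alternatively, I would use the decay $|w|\lesssim r^{-1}\cdot(\text{residue})$ of the $1/r$ part. Once $\square w$ is bounded on channel $k$, the $G^\ast$ bound on channel $k$ follows from the radiation identity above, which closes the induction.
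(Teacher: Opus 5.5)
Your reduction to bounding $\square w$ on the channels and your treatment of $\Omega_{c_1\lambda_{J-1}}$ are fine. The dyadic part, however, has a genuine gap.

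\textbf{The residue term is never bounded.} You list the residue $R^{-1/2}\bigl|\int_{-R}^{R}G^\ast(s)\,{\rm d}s\bigr|=R^{1/2}|w(R,0)|$, with $R=2^kc_2\lambda_J$, as one piece of your splitting, but you never bound it, and nothing in your argument can. From the size hypotheses alone, \eqref{radiation residue identity} gives only $R^{1/2}|w(R,0)|\lesssim\tau$; anchoring at $c_1\lambda_{J-1}$ instead gives the worse bound $2^{(K-k)/2}\tau$. The corresponding $1/r$ part of $w$ then feeds up to $(2^kc_2)^{-2}\tau$ into $\|\chi_{\Psi_k}W^4w\|_{L^1L^2}$, which is far above $2^{(k-K)/2}\tau$ for small $k$.

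This is not a technicality. The $1/r$ tail is the scaling direction $\Lambda W$. A relative error $\epsilon\lesssim\tau$ in $\lambda_J$ keeps all hypotheses of size $O(\tau)$, but changes $\|\chi_{\Psi_0}\square w\|_{L^1L^2}$ by about $\epsilon c_2^{-5/2}$. The lemma, by contrast, claims the much smaller bound $2^{-K/2}\tau\simeq(c_2\lambda_J/c_1\lambda_{J-1})^{1/2}\tau$.

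So the proof must use the specific normalization of $\lambda_J$. By Remark~\ref{uniform scale ratio} and $\varphi(c_2)=0$ (Lemma~\ref{linearized elliptic}), one has $w(c_2\lambda_J,0)=0$, i.e.\ $\int_{-c_2\lambda_J}^{c_2\lambda_J}G^\ast=0$. Hence the residue at channel $k$ is $\lesssim\sum_{m<k}2^{(m-k)/2}b_m$.

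\textbf{Downward induction cannot close.} The anchor couples channel $k$ to the \emph{inner} channels $m<k$. Your downward induction has not controlled these, already at its first step $k=K$. Meanwhile the Strichartz splitting couples channel $k$ to the outer channels, so induction in neither direction closes. The paper proves all channel bounds simultaneously. It writes the coupled system for $a_k,a_{k,\ell},b_k$ and passes to the excesses $B_k=\max\{0,b_k-2^{(k-K)/2}\tau\}$, $A_k$, $A_{k,\ell}$. It then shows $M=\max_kB_k\le\frac25M$, using the a priori bound $b_k\lesssim_J\tau$.

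\textbf{The outer-channel part of $5W^4w$ loses a factor $K-k$.} The sup bound $(2^jc_2)^{-1/2}\cdot2^{(j-K)/2}\tau=c_2^{-1/2}2^{-K/2}\tau$ does not depend on $j$. Pairing it with the full-channel norm $\|\chi_{\Psi_k}W^4\|_{L^1L^2}\simeq(2^kc_2)^{-3/2}$ and summing over $k<j\le K$ gives $(K-k)\,c_2^{-2}2^{-2k}\cdot2^{(k-K)/2}\tau$. Reweighting by $(1+K-k)^{-2}$ makes this worse. The sum over $j$ improves only to $O(1)$, while the target at channel $k$ shrinks by $(1+K-k)^{-2}$, leaving the ratio $c_2^{-2}(1+K)^2$ at $k=0$.

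The missing ingredient is finite speed of propagation. The contribution of level $j$ vanishes on $\{|x|+|t|<2^jc_2\lambda_J\}$. On the rest of $\Psi_k$ the bubble is much weaker: Lemma~\ref{upper bound of W norm} gives $\|\chi W\|_{Y}^4\lesssim(2^kc_2)^{-2}2^{-12(j-k)/5}$, which makes the sum geometric. This is exactly the paper's splitting $\Psi_{k,\ell}=\Psi_k\cap\{|x|+|t|<2^{k+\ell}c_2\}$, where $a_{k,\ell}$ depends only on levels $m\le k+\ell-1$.

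A smaller slip: the $W^4v_L$ term is $\lesssim c_1^{1/2}c_2^{-2}2^{(k-K)/2}\tau$, so no condition $c_2\gg c_1^{-1/2}$ is needed. Such a condition would in fact conflict with fixing $c_2$ as an absolute constant before choosing $c_1=c_1(J)$.
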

\begin{proof}
 By dilation we may assume $\lambda_J = 1$, without loss of generality. We let $c_2 \gg 1$ and $c_1 \ll 1$ be constants, which will be determined later in the argument. According to Remark \ref{uniform scale ratio}, the inequalities $\lambda_{J-1}^{-1/2} \lesssim_J \tau$ and $4c_2  < c_1 \lambda_{J-1}$ always hold as long as $\tau < \tau(J,c_2,c_1)$ is sufficiently small. Please note that $\tau(J,c_2,c_1)$ here represents a constant determined by $J$, $c_2$ and $c_1$ only, which might be different at different places. This kind of notations will be frequently used in the subsequent. Now we compare $u$ with 
 \[
  S^\ast(x,t) = \sum_{j=1}^J \zeta_j W_{\lambda_j}(x) + \zeta_{J-1} \sqrt{3} \lambda_{J-1}^{-1/2} \varphi(x) + v_L(x,t). 
 \]
It is clear that 
\[
 w = u-S^\ast = \left(u - v_L - \sum_{j=1}^J \zeta_j W_{\lambda_j}(x)\right) - \sqrt{3} \zeta_{J-1} \lambda_{J-1}^{-1/2} \varphi(x)
\] 
It immediately follows from our assumption and the inequality $\lambda_{J-1}^{-1/2} \lesssim_J \tau$ that  
\begin{align*}
  \|\vec{w}(0)\|_{\mathcal{H}(c_1 \lambda_{J-1})} + \|\chi_{c_1 \lambda_{J-1}}(\square w)\|_{L^1 L^2} & \leq \tau + \frac{\sqrt{3}}{\lambda_{J-1}^{1/2}} \left(\|\varphi\|_{\dot{H}^1(\{x: |x|>c_1 \lambda_{J-1}\})} + \|\chi_{c_1 \lambda_{J-1}} \Delta \varphi\|_{L^1 L^2}\right)\\
  & \leq \tau + c(J) \tau \left(\|\varphi\|_{\dot{H}^1(\{x: |x|>c_1 \lambda_{J-1}\})} + \|\chi_{c_1 \lambda_{J-1}} \Delta \varphi\|_{L^1 L^2}\right).
\end{align*}
Since we have 
\[
 \lim_{r\rightarrow +\infty} \left(\|\varphi\|_{\dot{H}^1(\{x: |x|> r\})} + \|\chi_{r} \Delta \varphi\|_{L^1 L^2}\right) = 0, 
\]
the following inequality holds as long as $\tau < \tau(J,c_2,c_1)$ is sufficiently small 
\begin{equation} \label{initial inequality w}
 \|\vec{w}(0)\|_{\mathcal{H}(c_1 \lambda_{J-1})} + \|\chi_{c_1 \lambda_{J-1}}(\square w)\|_{L^1 L^2(\Rm \times \Rm^3)} \leq 2 \tau. 
\end{equation}
A similar argument, as well as the uniform boundedness of $\|\chi_0 \Delta \varphi\|$ given in Remark \ref{remark delta varphi}, also gives 
\begin{align} \label{upper bound of b1}
 \|\chi_{c_2}(\square w)\|_{L^1 L^2(\Rm \times \Rm^3)} \leq \tau + \frac{\sqrt{3}}{\lambda_{J-1}^{1/2}} \|\chi_{c_2} \Delta \varphi\|_{L^1 L^2} \lesssim_J \tau. 
\end{align}
For convenience we utilize the notations $\Psi_k$ for the following channel-like regions
\begin{align*}
 \Psi_k = \left\{(x,t): |t|+2^k c_2 < |x| < |t|+2^{k+1} c_2\right\}, \qquad k=0,1,2,\cdots, K.
\end{align*}
In order to take the advantage of finite speed of wave propagation, we also define
\begin{align*}
 \Psi_{k,\ell} & = \{(x,t)\in \Psi_k:  |x|+|t|<2^{k+\ell} c_2\}, & & \ell =1,2,\cdots, K+1-k.
\end{align*}
Next we introduce a few notations for the norms:
\begin{align*}
 & a_k = \|\chi_{\Psi_k} w\|_{Y(\Rm)}; & & a_{k,\ell} = \|\chi_{\Psi_{k,\ell}} w\|_{Y(\Rm)}; 
\end{align*}
as well as
\begin{align*}
 b_k = \|G^\ast\|_{L^2(\{s: 2^k c_2 < |s|<2^{k+1} c_2\})} + \|\chi_{\Psi_k} (\square w)\|_{L^1 L^2}. 
\end{align*}
Now we prove a few inequalities concerning $a_k$, $a_{k,\ell}$ and $b_k$, which will finally lead to the conclusion of Lemma \ref{key connection}. First of all, we observe that $w$ is $c_2$-weakly non-radiative. Thus we may apply Lemma \ref{scatter profile of nonlinear solution} on $w$, recall \eqref{upper bound of b1} and obtain 
\begin{align} \label{upper bound of b2}
 b_{k} & \lesssim_1 \|\chi_{\Psi_k} (\square w)\|_{L^1 L^2} \lesssim_J \tau.
\end{align}
A more delicate upper bound of $b_k$ can be given in terms of $a_k$ and $a_{k,\ell}$. We calculate (we use the notation $\lambda = \lambda_{J-1}$ for convenience below)
\begin{align*}
 \square w & = F(u) -  \sum_{j=1}^J \zeta_j F(W_{\lambda_j}) - \zeta_{J-1} \frac{\sqrt{3}}{\lambda^{1/2}}(5W^4 \varphi +  5W^4) \\
 & = F\left(w + v_L + \sum_{j=1}^J \zeta_j W_{\lambda_j} + \frac{\sqrt{3} \zeta_{J-1}}{\lambda^{1/2}}  \varphi\right) - \sum_{j=1}^J \zeta_j F(W_{\lambda_j}) - \zeta_{J-1} \frac{5\sqrt{3}}{\lambda^{1/2}}(W^4 \varphi +  W^4).
\end{align*}
It immediately follows that 
\[
 b_k \lesssim_1 I_1 + I_2 + \cdots + I_7. 
\]
with
\begin{align*}
 I_1 & = \|\chi_{\Psi_k} W^4 w\|_{L^1 L^2};\\
 I_2 & = c(J) \left\|\chi_{\Psi_k}\left(|w|^5 + \lambda^{-2} \varphi^4 |w| + |v_L|^4 |w| + \sum_{j=1}^{J-1} W_{\lambda_j}^4 |w|\right)\right\|_{L^1 L^2};\\
 I_3 & = c(J) \left\|\chi_{\Psi_k}\left(|v_L|^5 + \lambda^{-2} \varphi^4 |v_L|  + \sum_{j=1}^{J} W_{\lambda_j}^4 |v_L|\right)\right\|_{L^1 L^2};\\
 I_4 & = c(J) \left\|\chi_{\Psi_k}\left(\lambda^{-5/2} |\varphi|^5 + \lambda^{-1} W^3 |\varphi|^2 + \lambda^{-1/2} \sum_{j=1}^{J-1} W_{\lambda_{j}}^4 |\varphi|\right)\right\|_{L^1 L^2};\\
 I_5 & = \left\|\chi_{\Psi_k}W^4(W_\lambda - \sqrt{3}\lambda^{-1/2})\right\|_{L^1 L^2};\\
 I_6 & = c(J) \left\|\chi_{\Psi_k}\left(\sum_{j=1}^{J-2} W^4 W_{\lambda_j} + \sum_{j=1}^{J-1} W W_{\lambda_j}^4 + W^3 W_{\lambda_{J-1}}^2 \right)\right\|_{L^1 L^2};\\
 I_7 & = c(J) \sum_{1\leq j< m\leq J-1} \left\|\chi_{\Psi_k} (W_{\lambda_j}^4 W_{\lambda_m} + W_{\lambda_j} W_{\lambda_m}^4)\right\|_{L^1 L^2}.
\end{align*}
We give the upper bounds of these terms one by one. First of all, we break the region $\Psi_k$ into several parts, apply Lemma \ref{upper bound of W norm} and deduce 
\begin{align*}
 I_1 & \lesssim_1 \left\|\chi_{\Psi_{k,1}} W^4 w\right\|_{L^1 L^2} + \sum_{\ell = 1}^{K-k} \left\|\chi_{\Psi_{k,\ell+1}\setminus \Psi_{k,\ell}} W^4 w\right\|_{L^1 L^2} + \left\|\chi_{\Psi_k \setminus \Psi_{k,K+1-k}} W^4 w\right\|_{L^1 L^2} \\
 & \lesssim_1 a_{k,1} \|\chi_{\Psi_k}W\|_{Y(\Rm)}^4 + \sum_{\ell = 1}^{K-k} \|\chi_{\Psi_k \setminus \Psi_{k,\ell}} W\|_{Y(\Rm)}^4 a_{k,\ell+1} + \|\chi_{\Psi_k \setminus \Psi_{k,K+1-k}} W\|_{Y(\Rm)}^4  a_k\\
 & \lesssim_1 a_{k,1} \left(c_2 2^k\right)^{-2} + \sum_{\ell =1}^{K-k} a_{k,\ell+1} \left(c_2 2^k\right)^{-2} 2^{-\frac{12}{5}\ell} + a_k \left(c_2 2^k\right)^{-2} 2^{-\frac{12}{5}(K-k)}\\
 & \lesssim_1 c_2^{-2} 2^{-2k} \left(\sum_{\ell =1}^{K-k+1}  2^{-\frac{12}{5}\ell} a_{k,\ell} + 2^{-\frac{12}{5}(K-k)} a_k\right). 
\end{align*}
In order to evaluate $I_2, I_3$, we recall Lemma \ref{lemma vL bound} and deduce 
\[
 \|\chi_{\Psi_k} v_L\|_{Y(\Rm)} \lesssim_1 \left(\frac{2^k c_2}{\lambda}\right)^{1/10} \tau \lesssim_1 c_1^{1/10} \tau 2^{\frac{k-K}{10}}.  
\]
Here we use the following fact, which will be frequently used in the subsequent argument 
\[
 2^K c_2 \simeq_1 c_1 \lambda \quad \Rightarrow \quad \frac{2^k c_2}{\lambda} = \frac{2^k 2^K c_2}{2^K \lambda} \simeq_1 \frac{2^k c_1\lambda}{2^K \lambda} \simeq_1 c_1 2^{k-K}. 
\]
In addition, we apply Lemma \ref{upper bound of W norm} and utilize the dilation invariance to deduce (as long as $\tau < \tau(J,c_2,c_1)$ is sufficiently small)
\begin{align*}
 \|\chi_{\Psi_k} W_{\lambda_j}\|_{Y(\Rm)} \lesssim_1 \left(\frac{c_2 2^k}{\lambda_j}\right)^{1/10} \left\{\begin{array}{ll} \lesssim_1 c_1^{1/10} 2^{\frac{k-K}{10}}, & j=J-1; \\ \lesssim_J c_1^{1/10} \tau^{1/5} 2^{\frac{k-K}{10}}, & j=1,2,\cdots,J-2. \end{array}\right.
\end{align*}
As a result, we obtain
\begin{align*}
 I_2 & \lesssim_J \|\chi_{\Psi_k} w\|_{Y(\Rm)}^5 + \lambda^{-2} \|\chi_{\Psi_k} \varphi\|_{Y(\Rm)}^4 \|\chi_{\Psi_k} w\|_{Y(\Rm)} + \|\chi_{\Psi_k} v_L\|_{Y(\Rm)}^4 \|\chi_{\Psi_k} w\|_{Y(\Rm)}\\
 & \qquad  + \sum_{j=1}^{J-1} \|\chi_{\Psi_k} W_{\lambda_j}\|_{Y(\Rm)}^4 \|\chi_{\Psi_k} w\|_{Y(\Rm)}\\
 & \lesssim_J a_k^5 + \lambda^{-2} a_k + c_1^{2/5} \tau^4 2^{\frac{2}{5}(k-K)} a_k + c_1^{2/5} 2^{\frac{2}{5}(k-K)} a_k\\
 & \lesssim_J a_k^5 + c_1^{2/5} 2^{\frac{2}{5}(k-K)} a_k. 
 \end{align*}
 Here we use the estimates 
 \begin{align*}
  &\|\chi_{\Psi_k} \varphi\|_{Y(\Rm)} \lesssim_1 \left\|\chi_{1} |x|^{-1}\right\|_{Y(\Rm)} \lesssim_1 1;& & \lambda \simeq_1 c_1^{-1} c_2 2^K. 
 \end{align*}
 Similarly we may combine the estimates above and Lemma \ref{lemma vL bound} to deduce 
 \begin{align*}
  I_3 & \lesssim_J \|\chi_{\Psi_k} v_L\|_{Y(\Rm)}^5 + \lambda^{-2} \|\chi_{\Psi_k} \varphi\|_{Y(\Rm)}^4 \|\chi_{\Psi_k} v_L\|_{Y(\Rm)} + \sum_{j=1}^{J-1} \|\chi_{\Psi_k} W_{\lambda_j}\|_{Y(\Rm)}^4 \|\chi_{\Psi_k} v_L\|_{Y(\Rm)} \\
  & \qquad + \left\|\chi_{\Psi_k} W^4 v_L\right\|_{L^1 L^2}\\
  & \lesssim_J c_1^{1/2} \tau^5 2^{\frac{1}{2}(k-K)} + c_1^{1/10} \lambda^{-2} \tau 2^{\frac{k-K}{10}} + c_1^{1/2} \tau 2^{\frac{1}{2}(k-K)} + \tau \lambda^{-1/2} (c_2 2^k)^{-3/2}\\
  & \lesssim_J c_1^{1/2} \tau 2^{\frac{1}{2}(k-K)}. 
 \end{align*}
 Next we apply Corollary \ref{w lambda varphi} and obtain 
  \begin{align*}
  I_4 & \lesssim_J \lambda^{-5/2} \|\chi_{\Psi_k} \varphi\|_{Y(\Rm)}^5 + \lambda^{-1} \|\chi_{\Psi_k} W\|_{Y(\Rm)}^3  \|\chi_{\Psi_k} \varphi\|_{Y(\Rm)}^2 + \lambda^{-1/2} \sum_{j=1}^{J-1} \left\|\chi_{\Psi_k} W_{\lambda_j}^4 \varphi\right\|_{L^1 L^2}\\
  & \lesssim_J \lambda^{-5/2} + \left(c_2 2^k\right)^{-3/2}  \lambda^{-1} + \lambda^{-1/2} \sum_{j=1}^{J-1} \lambda_j^{-1} (2^k c_2)^{1/2}\\
  & \lesssim_J \lambda^{-1} \lesssim_J c_1^{1/2} \tau 2^{\frac{k-K}{2}}.
 \end{align*}
 The estimates of $I_5$ and $I_6$ immediately follow from Lemma \ref{estimate W J lambda} and Lemma \ref{L1L2 channel estimate mixed}: 
 \begin{align*}
  I_5 & \lesssim_1 (2^k c_2)^{1/2} \lambda^{-5/2} \ln \lambda \lesssim_J c_1^{1/2} \tau 2^{\frac{k-K}{2}};\\
  I_6 & \lesssim_J \sum_{j=1}^{J-2} \lambda_j^{-1/2} (2^k c_2)^{-3/2} + \sum_{j=1}^{J-1} \lambda_j^{-1} (2^k c_2)^{1/2} + \lambda^{-1} (2^k c_2)^{-1/2} \lesssim_J c_1^{1/2} \tau 2^{\frac{k-K}{2}}. 
 \end{align*}
 Finally we combine Lemma \ref{L1L2 channel estimate mixed} and the dilation invariance to deduce 
 \begin{align*}
  I_7 & \lesssim_J \sum_{1\leq j < m\leq J-1} \left[\left(\frac{\lambda_j}{\lambda_m}\right)^{-1} \left(\frac{2^k c_2}{\lambda_m}\right)^{1/2} + \left(\frac{\lambda_j}{\lambda_m}\right)^{-1/2}\left(\frac{2^k c_2}{\lambda_m}\right)^{1/2}\right] \\
  & \lesssim_J  \sum_{1\leq j < m\leq J-1} \lambda_j^{-1/2} (2^k c_2)^{1/2} \lesssim_J c_1^{1/2} \tau 2^\frac{k-K}{2}.
 \end{align*}
 In summary we obtain 
 \begin{align}
  b_{k} \lesssim_1  c_2^{-2} 2^{-2k} \left(\sum_{\ell =1}^{K-k+1}  2^{-\frac{12}{5}\ell} a_{k,\ell} + 2^{-\frac{12}{5}(K-k)} a_k\right) & + c(J) \left(a_k^5 + c_1^{2/5} 2^{\frac{2}{5}(k-K)} a_k\right) \nonumber\\
  & +  c(J) c_1^{1/2} \tau 2^{\frac{k-K}{2}} \label{recursion one}
 \end{align}
 Now we give upper bounds of $a_k$ and $a_{k,\ell}$ in terms of $b_k$. First of all, According to Remark \ref{uniform scale ratio} and Lemma \ref{linearized elliptic}, we must have $w(c_2) = 0$. In other words, we have 
 \[
  \int_{-c_2}^{c_2} G^\ast(s) {\rm d} s = 0. 
 \]
 Thus
 \begin{align*}
  \left|\int_{-2^k c_2}^{2^k c_2} G^\ast(s) {\rm d} s\right| & \leq \sum_{m=0}^{k-1} \left|\int_{2^m c_2 < |s| < 2^{m+1} c_2} G^\ast(s) {\rm d} s\right|\\
  & \lesssim_1 \sum_{m=0}^{k-1} (2^m c_2)^{1/2} \|G^\ast\|_{L^2(\{s: 2^m c_2 < |s|<2^{m+1} c_2\})} \lesssim_1 \sum_{m=0}^{k-1} (2^m c_2)^{1/2} b_m. 
 \end{align*}
 Thus 
 \[
  \frac{1}{(2^k c_2)^{1/2}}\left|\int_{-2^k c_2}^{2^k c_2} G^\ast(s) {\rm d} s\right| \lesssim_1 \sum_{m=0}^{k-1} 2^\frac{m-k}{2} b_m
 \]
 The Strichartz estimates given in Corollary \ref{lemma split initial data} and Corollary \ref{lemma split data} then gives 
 \begin{align*}
  a_k & \lesssim_1  \frac{1}{(2^k c_2)^{1/2}}\left|\int_{-2^k c_2}^{2^k c_2} G^\ast(s) {\rm d} s\right| + \|G^\ast\|_{L^2(\{s: 2^k c_2<|s|<2^{k+1} c_2)} +\|\chi_{\Psi_k}\square w\|_{L^1 L^2}\\
   & \qquad + \sum_{m=k+1}^K \left(\frac{2^k c_2}{2^m c_2}\right)^{1/10} \left(\|G^\ast\|_{L^2(\{s: 2^m c_2 < |s| < 2^{m+1} c_2\})}+ \|\chi_{\Psi_m} \square w\|_{L^1 L^2}\right) \\
   & \qquad + \left(\frac{2^k c_2}{2^{K+1} c_2}\right)^{1/10} \left(\|G^\ast\|_{L^2(\{s: |s| > 2^{K+1} c_2\})} + \|\chi_{2^{K+1} c_2} \square w\|_{L^1 L^2}\right)\\ 
  & \lesssim_1 \sum_{m=0}^{k-1} 2^{\frac{m-k}{2}} b_m + \sum_{m=k}^{K} 2^{\frac{k-m}{10}} b_m + 2^\frac{k-K}{10} \tau; 
 \end{align*}
 Here we utilize the exterior energy identity \eqref{radiation residue identity} and and exterior upper bound \eqref{initial inequality w}. Similarly we may apply finite speed of propagation and deduce 
 \begin{align*}
 a_{k,\ell} \lesssim_1 \sum_{m=0}^{k-1} 2^{\frac{m-k}{2}} b_m + \sum_{m=k}^{k+\ell-1} 2^{\frac{k-m}{10}} b_m, \qquad \ell = 1,2,\cdots, K+1-k. 
 \end{align*}
 Now we collect all inequalities above: 
 \begin{align*}
  b_{k} & \leq  c_1^\ast c_2^{-2} 2^{-2k} \left(\sum_{\ell =1}^{K-k+1}  2^{-\frac{12}{5}\ell} a_{k,\ell} + 2^{-\frac{12}{5}(K-k)} a_k\right)  + c_2^\ast \left(a_k^5 + c_1^{2/5} 2^{\frac{2}{5}(k-K)} a_k\right) +  c_2^\ast c_1^{1/2} \tau 2^{\frac{k-K}{2}};\\
  b_k & \leq c_3^\ast \tau; \\
  a_k & \leq c_0^\ast \left(\sum_{m=0}^{k-1} 2^{\frac{m-k}{2}} b_m + \sum_{m=k}^{K} 2^{\frac{k-m}{10}} b_m + 2^\frac{k-K}{10} \tau\right); \\
  a_{k,\ell} & \leq c_0^\ast \left(\sum_{m=0}^{k-1} 2^{\frac{m-k}{2}} b_m + \sum_{m=k}^{k+\ell-1} 2^{\frac{k-m}{10}} b_m\right). 
 \end{align*}
 Here $c_0^\ast = c_0^\ast(1)$, $c_1^\ast = c_1^\ast(1)$ are absolute constants; and $c_2^\ast = c_2^\ast (J)$, $c_3^\ast = c_3^\ast(J)$ are two constants determined by $J$ only. The second inequality follows from \eqref{upper bound of b2}. We claim that we may choose suitable constants $c_2 = c_2(1)$ and $c_1 = c_1(J)$, $\tau_0= \tau_0(J)$, as well as another constant $\gamma = \gamma(1)$, such that if $\tau<\tau_0$ is sufficiently small, then 
 \begin{align} \label{to prove abkell}
  &b_k \leq  2^{\frac{k-K}{2}} \tau; & &a_k \leq \gamma 2^{\frac{k-K}{10}} \tau & &a_{k,\ell} \leq \gamma 2^\frac{k-K}{2} 2^{\frac{2}{5}\ell} \tau. 
 \end{align}
Please note that the upper bound of $b_k$ here immediately gives the first inequality in the conclusion of Lemma \ref{key connection}. Indeed, we may first choose the constants $\gamma, c_2 > 1$ and $c_1 < 1$ one by one such that  
 \begin{align*}
  & \gamma > 20 c_0^\ast; & & c_1^\ast c_2^{-2} \gamma < \frac{1}{10}; & \gamma c_2^\ast c_1^{2/5} < \frac{1}{10}; 
  \end{align*}
 then choose a sufficiently small constant $\tau_0 = \tau_0(J)$ such that (and that $\tau_0 < \tau(J,c_2,c_1)$, which guarantees that all the argument above holds if $\tau < \tau_0$)
\[
 16 c_2^\ast \max\left\{(c_3^\ast)^4,1\right\} \gamma^5 \tau_0^4 < \frac{1}{10};
\]
Please note that the condition $c_1$ satisfies also implies that 
\[
 c_2^\ast c_1^{1/2} < \frac{1}{10}. 
\]
Now we prove the inequalities in \eqref{to prove abkell}. Let us define 
\[
  B_k = \max\left\{0, b_k - 2^{\frac{k-K}{2}}\tau\right\}
 \]
 and 
 \begin{align*}
  &A_k = \max\left\{0, a_k - \gamma 2^{\frac{k-K}{10}}\tau \right\};& &A_{k,\ell} = \max\left\{0,a_{k,\ell} - \gamma 2^\frac{k-K}{2} 2^{\frac{2}{5}\ell} \tau\right\};
 \end{align*}
 and prove that they satisfy the inequalities: 
  \begin{align}
  B_{k} & \leq c_1^\ast c_2^{-2} 2^{-2k} \left(\sum_{\ell =1}^{K-k+1}  2^{-\frac{12}{5}\ell} A_{k,\ell} + 2^{-\frac{12}{5}(K-k)} A_k\right) + 16 c_2^\ast A_k^5 + c_2^\ast c_1^{2/5} 2^{\frac{2}{5}(k-K)} A_k; \label{inequality Bk}\\
  A_k & \leq c_0^\ast \left(\sum_{m=0}^{k-1} 2^{\frac{m-k}{2}} B_m + \sum_{m=k}^{K} 2^{\frac{k-m}{10}} B_m\right); \label{inequality Ak}\\
  A_{k,\ell} & \leq c_0^\ast \left( \sum_{m=0}^{k-1} 2^{\frac{m-k}{2}} B_m + \sum_{m=k}^{k+\ell-1} 2^{\frac{k-m}{10}} B_m\right). \label{inequality Akl} 
 \end{align}
 These three inequalities can be verified in the same manner. Since all $B_k$, $A_k$ and $A_{k,\ell}$ are nonnegative, the first inequality is trivial if $B_k = 0$. If $B_k > 0$, then we must have
 \[
  b_k = B_k + 2^\frac{k-K}{2} \tau
 \]
 Inserting this identity, as well as the inequalities 
 \begin{align*} 
  &a_k \leq A_k + \gamma 2^{\frac{k-K}{10}}\tau;& &a_{k,\ell} \leq A_{k,\ell} + \gamma 2^\frac{k-K}{2} 2^{\frac{2}{5}\ell} \tau
 \end{align*}
 into the inequality $b_k$ satisfies, we obtain 
 \begin{align*}
  B_k + 2^\frac{k-K}{2} \tau & \leq  c_1^\ast c_2^{-2} 2^{-2k} \left(\sum_{\ell =1}^{K-k+1}  2^{-\frac{12}{5}\ell} \left(A_{k,\ell} + \gamma 2^\frac{k-K}{2} 2^{\frac{2}{5}\ell} \tau\right) + 2^{-\frac{12}{5}(K-k)} \left(A_k + \gamma 2^{\frac{k-K}{10}}\tau\right)\right) \\
  & \qquad + c_2^\ast \left(\left(A_k + \gamma 2^{\frac{k-K}{10}}\tau\right)^5 + c_1^{2/5} 2^{\frac{2}{5}(k-K)} \left(A_k + \gamma 2^{\frac{k-K}{10}}\tau\right)\right) +  c_2^\ast c_1^{1/2} \tau 2^{\frac{k-K}{2}}\\
  & \leq  c_1^\ast c_2^{-2} 2^{-2k} \left(\sum_{\ell =1}^{K-k+1}  2^{-\frac{12}{5}\ell} A_{k,\ell} + 2^{-\frac{12}{5}(K-k)} A_k \right) +  c_2^\ast \left(16 A_k^5 + c_1^{2/5} 2^{\frac{2}{5}(k-K)} A_k \right) \\
  & \qquad + c_1^\ast c_2^{-2} 2^{-2k} \left(\sum_{\ell =1}^{K-k+1} \gamma 2^\frac{k-K}{2} 2^{-2\ell} \tau + \gamma 2^{-\frac{5}{2}(K-k)} \tau \right) + 16 c_2^\ast \gamma^5 2^{\frac{k-K}{2}} \tau^5 \\
  & \qquad + c_2^\ast c_1^{2/5} \gamma 2^{\frac{k-K}{2}} \tau + c_2^\ast c_1^{1/2} 2^{\frac{k-K}{2}} \tau\\
  & \leq  c_1^\ast c_2^{-2} 2^{-2k} \left(\sum_{\ell =1}^{K-k+1}  2^{-\frac{12}{5}\ell} A_{k,\ell} + 2^{-\frac{12}{5}(K-k)} A_k \right) +  c_2^\ast \left(16 A_k^5 + c_1^{2/5} 2^{\frac{2}{5}(k-K)} A_k \right) \\
  & \qquad + 2^\frac{k-K}{2} \tau \left(\gamma c_1^\ast c_2^{-2} 2^{-2k}+ \gamma c_1^\ast c_2^{-2} 2^{-2K} + 16 c_2^\ast \gamma^5 \tau_0^4 + c_2^\ast c_1^{2/5} \gamma +c_2^\ast c_1^{1/2} \right) \\
  & \leq c_1^\ast c_2^{-2} 2^{-2k} \left(\sum_{\ell =1}^{K-k+1}  2^{-\frac{12}{5}\ell} A_{k,\ell} + 2^{-\frac{12}{5}(K-k)} A_k \right) +  c_2^\ast \left(16 A_k^5 + c_1^{2/5} 2^{\frac{2}{5}(k-K)} A_k \right) \\
  & \qquad + (1/2) 2^\frac{k-K}{2} \tau.
 \end{align*}
 This verifies the inequality concerning $B_k$. Similarly if $A_k > 0$ (or $A_{k,\ell} > 0$), then we have 
 \begin{align*}
  A_k + \gamma 2^{\frac{k-K}{10}}\tau & \leq c_0^\ast \left(\sum_{m=0}^{k-1} 2^{\frac{m-k}{2}} \left(B_m + 2^\frac{m-K}{2} \tau\right) + \sum_{m=k}^{K} 2^{\frac{k-m}{10}} \left(B_m + 2^\frac{m-K}{2} \tau\right) + 2^\frac{k-K}{10} \tau\right) \\
  & \leq c_0^\ast \left(\sum_{m=0}^{k-1} 2^{\frac{m-k}{2}} B_m + \sum_{m=k}^{K} 2^{\frac{k-m}{10}} B_m + \sum_{m=0}^{k-1} 2^\frac{2m-k-K}{2}\tau + \sum_{m=k}^{K} 2^\frac{4m+k-5K}{10} \tau + 2^\frac{k-K}{10} \tau\right) \\
  & \leq c_0^\ast \left(\sum_{m=0}^{k-1} 2^{\frac{m-k}{2}} B_m + \sum_{m=k}^{K} 2^{\frac{k-m}{10}} B_m\right) + c_0^\ast 2^\frac{k-K}{2} \tau + \frac{c_0^\ast 2^\frac{k-K}{10} \tau}{1-2^{-\frac{2}{5}}} + c_0^\ast 2^\frac{k-K}{10} \tau\\
  & \leq c_0^\ast \left(\sum_{m=0}^{k-1} 2^{\frac{m-k}{2}} B_m + \sum_{m=k}^{K} 2^{\frac{k-m}{10}} B_m\right) + 7 c_0^\ast 2^\frac{k-K}{10} \tau;
 \end{align*}
 and 
 \begin{align*}
  A_{k,\ell} + \gamma 2^\frac{k-K}{2} 2^{\frac{2}{5}\ell} \tau & \leq c_0^\ast \left(\sum_{m=0}^{k-1} 2^{\frac{m-k}{2}} \left(B_m + 2^\frac{m-K}{2} \tau\right) + \sum_{m=k}^{k+\ell-1} 2^{\frac{k-m}{10}} \left(B_m + 2^\frac{m-K}{2} \tau\right)\right)\\
  & \leq c_0^\ast \left(\sum_{m=0}^{k-1} 2^{\frac{m-k}{2}} B_m + \sum_{m=k}^{k+\ell-1} 2^{\frac{k-m}{10}}B_m + \sum_{m=0}^{k-1} 2^\frac{2m-k-K}{2}\tau + \sum_{m=k}^{k+\ell-1} 2^\frac{4m+k-5K}{10} \tau\right) \\
  & \leq c_0^\ast \left(\sum_{m=0}^{k-1} 2^{\frac{m-k}{2}} B_m + \sum_{m=k}^{k+\ell-1} 2^{\frac{k-m}{10}}B_m\right) +  c_0^\ast 2^\frac{k-K}{2} \tau + \frac{c_0^\ast 2^\frac{k-K}{2} 2^{\frac{2}{5}(\ell-1)}\tau}{1-2^{-\frac{2}{5}}} \\
  & \leq c_0^\ast \left(\sum_{m=0}^{k-1} 2^{\frac{m-k}{2}} B_m + \sum_{m=k}^{k+\ell-1} 2^{\frac{k-m}{10}}B_m\right) + 6 c_0^\ast 2^\frac{k-K}{2} 2^{\frac{2}{5}\ell}\tau.
 \end{align*}
 These verify the inequalities \eqref{inequality Bk}, \eqref{inequality Ak} and \eqref{inequality Akl}. Finally we show that $B_k = A_k = A_{k,\ell} = 0$, which immediately verifies \eqref{to prove abkell}.  Indeed, let us consider 
 \[
  M =\max_{k=0,1,\cdots, K} B_k \leq \max_{k=0,1,\cdots,K} b_k \leq c_3^\ast \tau_0.
 \]
 Inserting this upper bound into \eqref{inequality Ak} and \eqref{inequality Akl}, we obtain  
 \[ 
  \max_{k,\ell} \{A_k, A_{k,\ell}\} \leq c_0^\ast \left(\frac{2^{-1/2}}{1-2^{-1/2}} + \frac{1}{1-2^{-1/10}}\right) M \leq 18 c_0^\ast M \leq \gamma M. 
 \]
 We then insert this into \eqref{inequality Bk} and obtain 
 \begin{align*}
  M  & = \max_{k} B_k \\
  & \leq \max_{k} \left[c_1^\ast c_2^{-2} 2^{-2k} \left(\sum_{\ell =1}^{K-k+1}  2^{-\frac{12}{5}\ell} \gamma M + 2^{-\frac{12}{5}(K-k)} \gamma M\right) + 16 c_2^\ast \gamma^5 M^5 + c_2^\ast c_1^{2/5} 2^{\frac{2}{5}(k-K)} \gamma M\right]\\
  & \leq \max_{k} \left[2 c_1^\ast c_2^{-2} 2^{-2k} \gamma M + 16 c_2^\ast \gamma^5 M^5 + c_2^\ast c_1^{2/5} \gamma M \right]\\
  & \leq 2 c_1^\ast c_2^{-2} \gamma M + 16 c_2^\ast \gamma^5 (c_3^\ast \tau_0)^4 M + c_2^\ast c_1^{2/5} \gamma M\\
  & \leq (2/5) M.
 \end{align*}
 As a result, we must have $M=0$, which verifies \eqref{to prove abkell} and finishes the proof of the first inequality in the conclusion. Finally the second inequality in the conclusion immediately follows from \eqref{initial inequality w}. 
 \end{proof}

Next we may further extend the domain of the approximation given in Lemma \ref{key connection}. Please note that from now on we always apply the soliton resolution theory (Proposition \ref{main tool}) with the parameter $c_2$ given in Lemma \ref{key connection}. In particular, a $J$-bubble exterior solution (see Section 3) is also defined with this parameter $c_2$. 

\begin{corollary} \label{key connection complete}
Given any positive integer $J \geq 2$ and a positive constant $c_3 < c_2$, there exists a small constant $\tau_1 = \tau_1(J,c_3)$, such that if $u$ is a $J$-bubble exterior solution to (CP1) defined in Section 3 with 
  \begin{align*}
  \tau \doteq & \left(\sup_{0<r<\lambda_{J-1}} \frac{\lambda_{J-1}}{r} \int_{-r}^r |G(s)|^2 {\rm d} s\right)^{1/2}  + \|\chi_0 v_L\|_{Y(\Rm)}  + \sup_{r>0} \frac{1}{r^{1/2}} \int_{-r}^r |G(s)| {\rm d} s  < \tau_1,
 \end{align*}
 then the error function 
 \[
  w = u - \sum_{j=1}^J \zeta_j W_{\lambda_j}(x) - \zeta_{J-1} \sqrt{3} \lambda_{J-1}^{-1/2} \varphi(x/\lambda_J) - v_L(x,t)
 \]
 and the radiation profile $G^\ast$ associated to $\vec{w}(0)$ satisfy 
 \begin{align*}
  \|G^\ast\|_{L^2(s: c_2 \lambda_J < |s| < r)} + \|\chi_{c_2 \lambda_J, r} \square w\|_{L^1 L^2} & \lesssim_{J} \left(\frac{r}{\lambda_{J-1}}\right)^{1/2}\tau, & & c_2 \lambda_J < r<  \lambda_{J-1};  \\
  \|\vec{w}(0)\|_{\mathcal{H}(\lambda_{J-1})} + \|\chi_{\lambda_{J-1}}\square w\|_{L^1 L^2} & \lesssim_J \tau; && \\
  \|G^\ast\|_{L^2(s: c_3 \lambda_J < |s| < c_2 \lambda_J)} + \|\chi_{c_3 \lambda_J, c_2\lambda_J} \square w\|_{L^1 L^2} & \lesssim_{J,c_3} \left(\frac{\lambda_J}{\lambda_{J-1}}\right)^{1/2}\tau. & &
 \end{align*}
 Here we use the same notations $v_L$, $G$, $\lambda_j$, $\zeta_j$ and $\varphi$ as in Lemma \ref{key connection}. 
\end{corollary}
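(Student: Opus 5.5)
The plan is to reduce Corollary \ref{key connection complete} to Lemma \ref{key connection} and then handle the inner annulus $c_3\lambda_J<|x|-|t|<c_2\lambda_J$ by a separate, simpler estimate. By dilation we take $\lambda_J=1$.

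\textbf{Step 1: checking the hypothesis of Lemma \ref{key connection}.} Lemma \ref{key connection} requires smallness of a larger quantity than the $\tau$ of the corollary. It also involves $\|\vec u(0)-\vec v_L(0)-\sum\zeta_j(W_{\lambda_j},0)\|_{\mathcal H(c_2)}$ and $\|\chi_{c_2}(F(u)-\sum\zeta_jF(W_{\lambda_j}))\|_{L^1L^2}$. Since $\tau\ge\|\chi_0 v_L\|_{Y(\Rm)}=\delta$, Proposition \ref{main tool} (case (a) if $J\le n-1$, applied with $n=J+1$) bounds the first of these by $c(J)\delta$. Remark \ref{square estimate} bounds the second by $c(J)\delta$ as well. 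Hence the full quantity in Lemma \ref{key connection} is at most $c(J)\tau$, and we choose $\tau_1$ so that $c(J)\tau_1<\tau_0(J)$. Applying Lemma \ref{key connection} (with $\tau$ replaced by $c(J)\tau$) then gives dyadic bounds $2^{(k-K)/2}\tau$ on the pieces $2^kc_2<|s|<2^{k+1}c_2$ for $k\le K$, together with the bound $2\tau$ outside $c_1\lambda_{J-1}$.

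\textbf{Step 2: the first two inequalities.} For $c_2<r<\lambda_{J-1}$, we sum the dyadic blocks up to $r$. By Minkowski the contributions are geometric, giving
\[
\sum_{2^kc_2\le r}2^{(k-K)/2}\tau\lesssim (2^{k_r}/2^K)^{1/2}\tau\simeq (r/(c_1\lambda_{J-1}))^{1/2}\tau,
\]
and $c_1=c_1(J)$ can be absorbed into $\lesssim_J$. When $r>c_1\lambda_{J-1}$ we add the $2\tau$ bound, which is itself $\lesssim_J (r/\lambda_{J-1})^{1/2}\tau$ since $r/\lambda_{J-1}\ge c_1$. The second inequality follows from the same bound because $\mathcal H(\lambda_{J-1})\subset\mathcal H(c_1\lambda_{J-1})$ and $\chi_{\lambda_{J-1}}\le\chi_{c_1\lambda_{J-1}}$.

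\textbf{Step 3: the inner channel $\Psi=\Omega_{c_3,c_2}$ (main obstacle).} Here Lemma \ref{key connection} says nothing, so I would run a small-data argument in this single channel. The idea is that $W$ has bounded but not small Strichartz norm there, while the channel has finite width, so a finite iteration should close.
\begin{itemize}
\item Split $\Psi$ into $N=N(c_3)$ thin subchannels $c_3+(i-1)\eta<|x|-|t|<c_3+i\eta$, indexed from the outside in. Choose $\eta$ so that $\|\chi W\|_{Y}^4$ on each subchannel is small (Lemma \ref{upper bound of W norm}, which gives $\lesssim\eta^{2/5}c_3^{-12/5}$).
\item The forcing terms $I_3$–$I_7$ from the proof of Lemma \ref{key connection}, evaluated now with $r_1\ge c_3$, are each $\lesssim_{J,c_3}\lambda_{J-1}^{-1/2}\tau$. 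For $I_5$ this uses Lemma \ref{estimate W J lambda}, which gives $\lambda^{-5/2}\ln\lambda$, and for the $\varphi$ terms Corollary \ref{w lambda varphi}, which gives $\lambda^{-1}$. It also uses $\lambda_{J-1}^{-1/2}\lesssim_J\tau$.
\item The boundary data are controlled as follows. The profile $G^\ast$ at $|s|>c_2$ is controlled by Step 2, and its total contribution is $\lesssim (1/\lambda_{J-1})^{1/2}\tau$ after weighting by Corollary \ref{lemma split initial data}. The mean $\int_{-r}^rG^\ast$ is anchored by $w(c_2)=0$.
\item On each subchannel, Strichartz combined with Lemma \ref{scatter profile of nonlinear solution} gives $a\lesssim b_{\mathrm{outer}}+\|\chi W\|_Y^4a+a^5+\lambda_{J-1}^{-1/2}\tau$. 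Absorbing and inducting inward over the $N$ subchannels yields bounds $C(J,c_3)\lambda_{J-1}^{-1/2}\tau$, which is the third inequality.
\end{itemize}
The delicate point is the constant mode $\frac1r\int_{-r}^rG^\ast$ at radius $r<c_2$, since it is only determined through $\int_{r<|s|<c_2}G^\ast$. This is exactly what the induction from outside in controls, so I expect it to close but with constants depending badly on $c_3$. That dependence is acceptable in the statement.
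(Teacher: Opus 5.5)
\textbf{Steps 1 and 2 are fine.} They match what the paper does: it notes that the two versions of $\tau$ are comparable by Proposition \ref{main tool} and Remark \ref{square estimate}, then reads off the first two bounds from Lemma \ref{key connection}. The architecture of Step 3 also matches the paper: thin channels, outside-in induction, the mean anchored by $w(c_2\lambda_J,0)=0$, and forcing terms $I_3$--$I_7$ of size $\lambda_{J-1}^{-1/2}\tau$.

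\textbf{The gap.} Your claim that the part of $G^\ast$ with $|s|>c_2$ contributes only $\lesssim\lambda_{J-1}^{-1/2}\tau$ to the Strichartz norm of $w$ in a channel of width $\eta$ is false. Take $\lambda_J=1$ and $\lambda=\lambda_{J-1}$. Corollary \ref{lemma split initial data}, combined with the bounds of Lemma \ref{key connection}, gives
\[
\sum_{k=0}^{K}\Bigl(\frac{\eta}{2^kc_2}\Bigr)^{1/10}2^{\frac{k-K}{2}}\tau+\Bigl(\frac{\eta}{2^{K+1}c_2}\Bigr)^{1/10}2\tau\simeq_J \eta^{1/10}\lambda^{-1/10}\tau .
\]
The sum is dominated by $k\approx K$, and the tail $|s|>c_1\lambda$ is only known to be $O(\tau)$. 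So the available information gives only $a\lesssim b+c\lambda^{-1/2}\tau+\lambda^{-1/10}\tau$.

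This is harmless for the terms in $I_2$. There $a$ is multiplied by factors like $(\eta/\lambda)^{2/5}$, or appears to the fifth power, and $\lambda^{-2/5}\lambda^{-1/10}=\lambda^{-1/2}$. It is fatal for the linear term $5W^4w$, whose coefficient is not small in $\lambda$. The bound $\|\chi W^4w\|_{L^1L^2}\le\|\chi W\|_Y^4\,a$ yields only $b\lesssim\eta^{2/5}\lambda^{-1/10}\tau$.

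The exponent $1/2$ is essential. In Lemma \ref{J bubble upper bound inner} and Proposition \ref{prop concentration}, the bound on $w(c_4\lambda_J,0)$ must beat the contribution of $\sqrt3\,\lambda_{J-1}^{-1/2}\varphi$, which is of size $(\lambda_J/\lambda_{J-1})^{1/2}$. A bound of order $(\lambda_J/\lambda_{J-1})^{1/10}\tau$ gives no contradiction.

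\textbf{The missing idea is finite speed of propagation.} Profile mass at $|s|\sim2^kc_2$ reaches the thin channel $\Psi$ only at points with $|x|+|t|\gtrsim2^kc_2$. There $W$ is small: by Lemma \ref{upper bound of W norm},
\[
\|\chi_{\Psi\cap\{|x|+|t|>2^\ell c_2\}}W\|_Y^4\lesssim\eta^{2/5}(2^\ell c_2)^{-12/5}.
\]
The paper exploits this as follows.
\begin{itemize}
\item It introduces $\Psi^\ell=\Psi\cap\{|x|+|t|<2^\ell c_2\}$ and $a_\ell=\|\chi_{\Psi^\ell}w\|_Y$.
\item It shows $a_\ell\lesssim b+c(n,J)\lambda^{-1/2}\tau+c(J)\eta^{1/10}2^{-\ell/10}2^{(\ell-K)/2}\tau$. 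Here only profile pieces with $k<\ell$ enter.
\item It splits $I_1$ over $\Psi^0$, the shells $\Psi^{\ell+1}\setminus\Psi^\ell$, and $\Psi\setminus\Psi^{K+1}$.
\end{itemize}
Pairing $a_{\ell+1}$ with the decay $2^{-12\ell/5}$ gives $\sum_\ell 2^{-2\ell}2^{-K/2}\tau\lesssim2^{-K/2}\tau\simeq_J\lambda^{-1/2}\tau$. The full norm $a$ appears only multiplied by $(2^{K+1}c_2)^{-12/5}\sim\lambda^{-12/5}$. This yields $I_1\lesssim\eta^{2/5}b+c(n,J)\lambda^{-1/2}\tau$. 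Only with this refinement does your absorption-and-induction scheme produce the third inequality with the correct rate.
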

\begin{proof}
 The first two inequalities immediately follows from Proposition \ref{main tool} and Lemma \ref{key connection}, as long as $\tau < \tau(J)$ is sufficiently small. Please note that the definitions of $\tau$ are different in Lemma \ref{key connection} and the current proposition. However, the values of these small $\tau$'s are always comparable, up to a constant solely determined by $J$, thanks to Proposition \ref{main tool} and Remark \ref{square estimate}. Now we prove the last inequality. Again by dilation we assume $\lambda_J=1$. We split the interval $[c_3,c_2]$ into several sub-intervals 
 \[
  c_3 = \alpha_N < \alpha_{N-1} < \cdots < \alpha_2 < \alpha_1 < \alpha_0 = c_2, \qquad \alpha_{n+1} \geq \alpha_n/2; 
 \]
 where the points $\alpha_n$'s, in particular, the number $N$ are uniqued determined by $J$ and $c_3$ and will be chosen later in the argument. It suffices to show that 
 \begin{equation} \label{induction step}
   \|G^\ast\|_{L^2(s: \alpha_n  < |s| < c_2)} + \|\chi_{\alpha_n, c_2} \square w\|_{L^1 L^2}  \lesssim_{J,n} \lambda_{J-1}^{-1/2} \tau,
 \end{equation}
 as long as $\tau < \tau(J,n)$ is sufficiently small. We conduct an induction on $n$. The inequality is trivial for $n=0$. Now we assume that \eqref{induction step} holds for a nonnegative integer $n$ and consider the case for $n+1$. We define 
 \begin{align*}
  &\Psi = \Omega_{\alpha_{n+1}, \alpha_n}& &a = \|\chi_{\Psi} w\|_{Y(\Rm)}; &
  &b = \|G^\ast\|_{L^2(s: \alpha_{n+1}<|s|<\alpha_n)} + \|\chi_{\Psi} \square w\|_{L^1 L^2};
 \end{align*}
 and let $\{\Psi_k\}_{k=0,1,2,\cdots, K}$ be the channel-like regions defined in Lemma \ref{key connection}. In addition, in order to take the advantage of finite propagation speed we define ($\ell = 0,1,\cdots,K+1$)
 \begin{align*}
  &\Psi^\ell = \left\{(x,t): \alpha_{n+1}+|t|< |x|<\alpha_n +|t|, |x|+|t|<c_2 2^\ell\right\}; & & a_\ell = \|\chi_{\Psi^\ell} w\|_{Y(\Rm)}. 
 \end{align*}
 We first give the upper bounds of $a_\ell$ and $a$ in term of $b$. First of all, we recall that 
 \[
  \int_{-c_2}^{c_2} G^\ast (s) {\rm d} s = 0,
 \]
 thus (for convenience we still use the notation $\lambda = \lambda_{J-1}$)
 \begin{align*}
  \alpha_{n+1}^{-1/2} \left|\int_{-\alpha_{n+1}}^{\alpha_{n+1}} G^\ast (s) {\rm d} s\right| & \leq \alpha_{n+1}^{-1/2} \left|\int_{\alpha_n < |s| < c_2} G^\ast (s) {\rm d} s\right| + \alpha_{n+1}^{-1/2} \left|\int_{\alpha_{n+1}<|s|<\alpha_n} G^\ast (s) {\rm d} s\right|\\
  & \lesssim_1 \left(\frac{c_2 -\alpha_n}{\alpha_{n+1}}\right)^{1/2} \|G^\ast\|_{L^2(\{s: \alpha_n < |s| < c_2\})}\\
  & \qquad  +  \left(\frac{\alpha_n-\alpha_{n+1}}{\alpha_{n+1}}\right)^{1/2} \|G^\ast\|_{L^2(\{s: \alpha_{n+1} < |s| < \alpha_n\})}\\
  & \lesssim_1  \left(\frac{2^n-1}{1/2}\right)^{1/2} c(J,n) \lambda^{-1/2} \tau + b\\
  & \lesssim_1 c(J,n) \lambda^{-1/2} \tau + b. 
 \end{align*}
 It follows from the Strichartz estimates given in Corollary \ref{lemma split initial data}/Corollary \ref{lemma split data}, the induction hypothesis and Lemma \ref{key connection} that 
 \begin{align*}
  a & \lesssim_1 \alpha_{n+1}^{-1/2} \left|\int_{-\alpha_{n+1}}^{\alpha_{n+1}} G^\ast (s) {\rm d} s\right| + \|G^\ast\|_{L^2(\{s: \alpha_{n+1} < |s| < \alpha_n\})} + \|\chi_{\Psi} \square w\|_{L^1 L^2}\\
  & \quad + \left(\frac{\alpha_n -\alpha_{n+1}}{\alpha_n}\right)^{1/10} \left(\|G^\ast\|_{L^2(s: \alpha_n < |s|<c_2)} + \|\chi_{\alpha_n, c_2}\square w\|_{L^1 L^2}\right)\\
  & \quad + \sum_{k=0}^K \left(\frac{\alpha_n -\alpha_{n+1}}{2^k c_2}\right)^{1/10} \left(\|G^\ast\|_{L^2(s: 2^k c_2 < |s|< 2^{k+1} c_2)} + \|\chi_{\Psi_k}\square w\|_{L^1 L^2}\right) \\
  & \quad + \left(\frac{\alpha_n -\alpha_{n+1}}{2^{K+1} c_2}\right)^{1/10} \left(\|G^\ast\|_{L^2(s: |s|> 2^{K+1} c_2)} + \|\chi_{2^{K+1} c_2}\square w\|_{L^1 L^2}\right) \\
  & \lesssim_1 b + c(J,n) \lambda^{-1/2} \tau + \sum_{k=0}^K \left(\frac{\alpha_n -\alpha_{n+1}}{2^k c_2}\right)^{1/10} c(J) 2^\frac{k-K}{2} \tau + \left(\frac{\alpha_n -\alpha_{n+1}}{2^{K+1} c_2}\right)^{1/10} c(J) \tau\\
  & \lesssim_1 b + c(n,J) \lambda^{-1/2} \tau + c(J) (\alpha_n-\alpha_{n+1})^{1/10} 2^{-K/10} \tau. 
 \end{align*}
Similarly we may use the finite speed of propagation to deduce 
 \begin{align*}
  a_\ell & \lesssim_1 b + c(n,J) \lambda^{-1/2} \tau + \sum_{k=0}^{\ell - 1} \left(\frac{\alpha_n -\alpha_{n+1}}{2^k c_2}\right)^{1/10} c(J) 2^\frac{k-K}{2} \tau.
 \end{align*}
 Thus we have 
 \begin{align*}
  a_0 & \lesssim_1 b + c(n,J) \lambda^{-1/2} \tau; & & \\
  a_\ell & \lesssim_1 b + c(n,J) \lambda^{-1/2} \tau + c(J) (\alpha_n-\alpha_{n+1})^{1/10} 2^{-(\ell-1)/10} 2^{(\ell-1-K)/2}\tau, & & \ell=1,2,\cdots, K+1. 
 \end{align*}
 Conversely we may also give an upper bound of $b$ in terms of $a$ and $a_\ell$. By Lemma \ref{scatter profile of nonlinear solution}, we have
 \begin{align*}
  b  \lesssim_1 \|\chi_{\alpha_{n+1}, \alpha_n} \square w\|_{L^1 L^2} \lesssim_1 I_1 + I_2 + \cdots + I_7.
 \end{align*}
 Here $I_1, I_2, \cdots, I_7$ are defined in the same manner as in the proof of Lemma \ref{key connection} 
\begin{align*}
 I_1 & = \|\chi_{\Psi} W^4 w\|_{L^1 L^2};\\
 I_2 & = c(J) \left\|\chi_{\Psi}\left(|w|^5 + \lambda^{-2} \varphi^4 |w| + |v_L|^4 |w| + \sum_{j=1}^{J-1} W_{\lambda_j}^4 |w|\right)\right\|_{L^1 L^2};\\
 I_3 & = c(J) \left\|\chi_{\Psi}\left(|v_L|^5 + \lambda^{-2} \varphi^4 |v_L|  + \sum_{j=1}^{J} W_{\lambda_j}^4 |v_L|\right)\right\|_{L^1 L^2};\\
 I_4 & = c(J) \left\|\chi_{\Psi}\left(\lambda^{-5/2} |\varphi|^5 + \lambda^{-1} W^3 |\varphi|^2 + \lambda^{-1/2} \sum_{j=1}^{J-1} W_{\lambda_{j}}^4 |\varphi|\right)\right\|_{L^1 L^2};\\
 I_5 & = \left\|\chi_{\Psi}W^4(W_\lambda - \sqrt{3}\lambda^{-1/2})\right\|_{L^1 L^2};\\
 I_6 & = c(J) \left\|\chi_{\Psi}\left(\sum_{j=1}^{J-2} W^4 W_{\lambda_j} + \sum_{j=1}^{J-1} W W_{\lambda_j}^4 + W^3 W_{\lambda_{J-1}}^2 \right)\right\|_{L^1 L^2};\\
 I_7 & = c(J) \sum_{1\leq j< m\leq J-1} \left\|\chi_{\Psi} (W_{\lambda_j}^4 W_{\lambda_m} + W_{\lambda_j} W_{\lambda_m}^4)\right\|_{L^1 L^2}.
\end{align*}
Following a similar argument to the proof of Lemma \ref{key connection} and inserting the upper bounds of $a$ and $a_\ell$, we obtain 
\begin{align*}
 I_1 & \lesssim_1 \left\|\chi_{\Psi^0} W^4 w\right\|_{L^1 L^2} + \sum_{\ell = 0}^{K} \left\|\chi_{\Psi^{\ell+1}\setminus \Psi^{\ell}} W^4 w\right\|_{L^1 L^2} + \left\|\chi_{\Psi \setminus \Psi^{K+1}} W^4 w\right\|_{L^1 L^2} \\
 & \lesssim_1 a_{0} \|\chi_{\Psi}W\|_{Y(\Rm)}^4 + \sum_{\ell = 0}^{K} \|\chi_{\Psi \setminus \Psi^{\ell}} W\|_{Y(\Rm)}^4 a_{\ell+1} + \|\chi_{\Psi \setminus \Psi^{K+1}} W\|_{Y(\Rm)}^4  a\\
 & \lesssim_1 (\alpha_n-\alpha_{n+1})^{2/5} \left(b+ c(n,J)\lambda^{-1/2} \tau\right)\\
 & \quad + \sum_{\ell = 0}^{K} (\alpha_n -\alpha_{n+1})^{2/5}(2^\ell c_2)^{-\frac{12}{5}}\left(b + c(n,J) \lambda^{-1/2} \tau + c(J) (\alpha_n-\alpha_{n+1})^{1/10} 2^{-\ell/10} 2^{(\ell-K)/2}\tau\right)\\
 & \quad +  (\alpha_n -\alpha_{n+1})^{2/5}(2^{K+1} c_2)^{-\frac{12}{5}} \left(b + c(n,J) \lambda^{-1/2} \tau + c(J) (\alpha_n-\alpha_{n+1})^{1/10} 2^{-K/10} \tau \right)\\
 & \lesssim_1 (\alpha_n-\alpha_{n+1})^{2/5} \left(b+ c(n,J)\lambda^{-1/2} \tau\right) + c(J) (\alpha_n -\alpha_{n+1})^{1/2} \sum_{\ell = 0}^K 2^{-2\ell - K/2} \tau\\
 & \lesssim_1  (\alpha_n-\alpha_{n+1})^{2/5} b + c(n,J)\lambda^{-1/2} \tau. 
 \end{align*}
 In addition, Lemma \ref{upper bound of W norm} and Lemma \ref{lemma vL bound} give 
 \begin{align*}
  \|\chi_{\Psi} W_{\lambda_j}\|_{Y(\Rm)} & \lesssim_1 \left(\frac{\alpha_n-\alpha_{n+1}}{\lambda_j}\right)^{1/10}, & &j=1,2,\cdots, J-1; \\
   \|\chi_{\Psi} v_L\|_{Y(\Rm)} & \lesssim_1 \left(\frac{\alpha_n-\alpha_{n+1}}{\lambda}\right)^{1/10} \tau. 
 \end{align*}
 We also have
 \begin{equation} \label{bound of varphi cn}
  \|\chi_\Psi \varphi\|_{Y(\Rm)} \lesssim_1 \left\|\chi_{2^{-n-1} c_2} |x|^{-1}\right\|_{Y(\Rm)} \lesssim_1 c(n),
 \end{equation} 
 We utilize these upper bounds, as well as
 \begin{align*}
  a & \lesssim_1 b + c(n,J) \lambda^{-1/2} \tau + c(J) (\alpha_n-\alpha_{n+1})^{1/10} 2^{-K/10} \tau\\
  & \lesssim_J b + c(n,J) \lambda^{-1/2} \tau + \lambda^{-1/10} \tau, 
 \end{align*}
 and deduce 
 \begin{align*}
  I_2 & \lesssim_J \|\chi_{\Psi} w\|_{Y(\Rm)}^5 + \lambda^{-2} \|\chi_{\Psi} \varphi\|_{Y(\Rm)}^4 \|\chi_{\Psi} w\|_{Y(\Rm)} + \|\chi_{\Psi} v_L\|_{Y(\Rm)}^4 \|\chi_{\Psi} w\|_{Y(\Rm)}\\
  & \qquad + \sum_{j=1}^{J-1} \|\chi_{\Psi} W_{\lambda_j}\|_{Y(\Rm)}^4 \|\chi_{\Psi} w\|_{Y(\Rm)}\\
  &\lesssim_J a^5 + \lambda^{-2} c(n) a + \left(\frac{\alpha_n-\alpha_{n+1}}{\lambda}\right)^{2/5}\tau^4 a + \left(\frac{\alpha_n-\alpha_{n+1}}{\lambda}\right)^{2/5} a \\
  & \lesssim_J b^5 + c(n,J) \lambda^{-1/2} \tau +  \lambda^{-2} c(n) b + \tau^4 b  + \lambda^{-2/5} b \\
  & \lesssim_J \left(b^4 + c(n) \lambda^{-2} + \tau^4 + \lambda^{-2/5}\right) b + c(n,J) \lambda^{-1/2} \tau.
 \end{align*}
 Lemma \ref{lemma vL bound} also gives the upper bound 
 \[
  \|\chi_{\Psi} W^4 v_L\|_{L^1 L^2} \lesssim_1 \lambda^{-1/2} \tau.
 \]
 As a result, we follow a similar argument for $I_2$ and obtain 
 \begin{align*}
  I_3 & \lesssim_J \|\chi_{\Psi} v_L\|_{Y(\Rm)}^5 + \lambda^{-2} \|\chi_{\Psi} \varphi\|_{Y(\Rm)}^4 \|\chi_{\Psi} v_L\|_{Y(\Rm)} + \sum_{j=1}^{J-1} \|\chi_{\Psi} W_{\lambda_j}\|_{Y(\Rm)}^4 \|\chi_{\Psi} v_L\|_{Y(\Rm)} \\
  & \qquad + \left\|\chi_{\Psi} W^4 v_L\right\|_{L^1 L^2}\\
  & \lesssim_J \left(\frac{\alpha_n -\alpha_{n+1}}{\lambda}\right)^{1/2} \tau^5 + \lambda^{-2} c(n) \tau + \left(\frac{\alpha_n -\alpha_{n+1}}{\lambda}\right)^{1/2} \tau + \lambda^{-1/2} \tau \\
  & \lesssim_{J,n} \lambda^{-1/2} \tau. 
 \end{align*}
  Next we recall \eqref{bound of varphi cn}, apply Corollary \ref{w lambda varphi} and obtain 
  \begin{align*}
  I_4 & \lesssim_J \lambda^{-5/2} \|\chi_{\Psi} \varphi\|_{Y(\Rm)}^5 + \lambda^{-1} \|\chi_{\Psi} W\|_{Y(\Rm)}^3  \|\chi_{\Psi} \varphi\|_{Y(\Rm)}^2 + \lambda^{-1/2} \sum_{j=1}^{J-1} \left\|\chi_{\Psi} W_{\lambda_j}^4 \varphi\right\|_{L^1 L^2}\\
  & \lesssim_{J,n} \lambda^{-5/2} + \lambda^{-1} + \lambda^{-1/2} \sum_{j=1}^{J-1} \lambda_j^{-1} (\alpha_n - \alpha_{n+1})^{1/2}\\
  & \lesssim_{J,n} \lambda^{-1} \lesssim_{J,n} \lambda^{-1/2} \tau.
 \end{align*}
 The estimates of $I_5$ and $I_6$ immediately follow from Lemma \ref{estimate W J lambda} and Lemma \ref{L1L2 channel estimate mixed}: 
 \begin{align*}
  I_5 & \lesssim_1 (\alpha_n - \alpha_{n+1})^{1/2} \lambda^{-5/2} \ln \lambda \lesssim_1 \lambda^{-1} \lesssim_J \lambda^{-1/2} \tau;\\
  I_6 & \lesssim_J \sum_{j=1}^{J-2} \lambda_j^{-1/2} (\alpha_n -\alpha_{n+1})^{1/2} + \sum_{j=1}^{J-1} \lambda_j^{-1} (\alpha_n-\alpha_{n+1})^{1/2} + \lambda^{-1} (\alpha_n - \alpha_{n+1})^{1/2} \\
  & \lesssim_J \lambda^{-1/2} \tau + \lambda^{-1} \lesssim_J \lambda^{-1/2} \tau. 
 \end{align*}
 Finally we combine Lemma \ref{L1L2 channel estimate mixed} and the dilation invariance to deduce 
 \begin{align*}
  I_7 & \lesssim_J \sum_{1\leq j < m\leq J-1} \left[\left(\frac{\lambda_j}{\lambda_m}\right)^{-1} \left(\frac{\alpha_n-\alpha_{n+1}}{\lambda_m}\right)^{1/2} + \left(\frac{\lambda_j}{\lambda_m}\right)^{-1/2}\left(\frac{\alpha_n-\alpha_{n+1}}{\lambda_m}\right)^{1/2}\right] \\
  & \lesssim_J  \sum_{1\leq j < m\leq J-1} \lambda_j^{-1/2} (\alpha_n - \alpha_{n+1})^{1/2} \lesssim_J \lambda^{-1/2} \tau.
 \end{align*}
In summary, we obtain an inequality 
\begin{align} \label{recursion two}
 b \leq c_1^\ast \left[(\alpha_n-\alpha_{n+1})^{2/5} + b^4 + c(n) \lambda^{-2} + \tau^4 + \lambda^{-2/5}\right]b + c(n,J) \lambda^{-1/2 }\tau. 
\end{align}
Here $c_1^\ast$ is a positive constant solely determined by $J$. We may choose $\alpha_n$ such that 
\[
 c_1^\ast (\alpha_n-\alpha_{n+1})^{2/5} < 1/4.
\]
In addition, we may recall Remark \ref{remark delta varphi} and give an upper bound of $b$ 
\begin{align*}
 b & \lesssim_1 \|\chi_{\Psi} \square w\|_{L^1 L^2} \\
 & \lesssim_1 \left\|\chi_{\Psi} \left(F(u) - \sum_{j=1}^J \zeta_j F(W_{\lambda_j})\right)\right\|_{L^1 L^2} + \lambda^{-1/2} \|\chi_{\Psi} \Delta \varphi\|_{L^1 L^2} \\
 & \lesssim_J \tau.
\end{align*}
As a result, when $\tau < \tau(n,J)$ is sufficiently small, we must have 
\[
 c_1^\ast \left[ b^4 + c(n) \lambda^{-2} + \tau^4 + \lambda^{-2/5}\right] < 1/4.
\]
Thus the terms with $b$ in the right hand side of \eqref{recursion two} can be absorbed by the left hand side, which leads to  
\[
 b \lesssim_{n,J} \lambda^{-1/2} \tau.
\]
A combination of this with the induction hypothesis verifies \eqref{induction step} for $n+1$. This completes the proof. 
\end{proof} 

\begin{lemma} \label{J bubble upper bound inner}
 Given an integer $J\geq 2$, there exists a small constant $\tau_2 = \tau_2(J)$ and a large constant $M = M(J)$, such that if $u$ is a radial $J$-bubble exterior solution to (CP1) with 
   \begin{align*}
  \tau \doteq & \left(\sup_{0<r<\lambda_{J-1}} \frac{\lambda_{J-1}}{r} \int_{-r}^r |G(s)|^2 {\rm d} s\right)^{1/2}  + \|\chi_0 v_L\|_{Y(\Rm)}  + \sup_{r>0} \frac{1}{r^{1/2}} \int_{-r}^r |G(s)| {\rm d} s  < \tau_2,
 \end{align*}
then the error function 
 \[
  w_\ast = u - \sum_{j=1}^J \zeta_j W_{\lambda_j}(x) - v_L(x,t)
 \]
 satisfies 
 \[
  \sup_{0<r<c_2 \lambda_J} r^{1/2} |w_\ast (r,0)| \leq M \left(\frac{\lambda_J}{\lambda_{J-1}}\right)^{1/2}. 
 \]
 Here we use the same notations $c_2$, $v_L$, $G$, $\zeta_j$, $\lambda_j$ as in Lemma \ref{key connection}. 
\end{lemma}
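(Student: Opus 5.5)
The plan is to rescale so that $\lambda_J=1$ and write $\lambda=\lambda_{J-1}$. I will then control the radiation profile $G_\ast$ of $\vec{w}_\ast(0)$ on $\{|s|<c_2\}$ in $L^2$ by $C(J)\lambda^{-1/2}$. The pointwise bound then follows from the explicit formula \eqref{initial data by radiation profile} and one exact cancellation. Namely, Remark \ref{uniform scale ratio} gives $w_\ast(c_2,0)=u(c_2,0)-v_L(c_2,0)-\sum_{j\le J}\zeta_jW_{\lambda_j}(c_2)=0$, hence $\int_{-c_2}^{c_2}G_\ast(s)\,{\rm d}s=0$. For $0<r<c_2$ this yields
\[
 r^{1/2}|w_\ast(r,0)| = r^{-1/2}\Bigl|\int_{-r}^{r}G_\ast(s)\,{\rm d}s\Bigr| \leq \sqrt{2}\,\|G_\ast\|_{L^2(\{|s|<c_2\})}.
\]
It therefore suffices to show $\|G_\ast\|_{L^2(\{|s|<c_2\})}\lesssim_J\lambda^{-1/2}$. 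This is a bound of size $(\lambda_J/\lambda_{J-1})^{1/2}$ and is not multiplied by $\tau$. That is consistent with the source term $5\sqrt3\zeta_{J-1}\lambda^{-1/2}W^4$, which is no longer absorbed into a corrector.

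Outside the radius $c_3$ I will use the information already available. Fix some $c_3<c_2$. I write $w_\ast = w+\sqrt3\zeta_{J-1}\lambda^{-1/2}\varphi$ and invoke Corollary \ref{key connection complete}. The extra term involving $\varphi$ contributes $\lesssim_{c_3}\lambda^{-1/2}$ to the profile on $\{|s|>c_3\}$ and to $\|\chi_{c_3}\square\|_{L^1L^2}$, by Lemma \ref{linearized elliptic} and Remark \ref{remark delta varphi}. So the profile of $w_\ast$ on $\{|s|>c_3\}$ and its forcing in $\Omega_{c_3}$ are bounded by $C(J)(\lambda^{-1/2}+\tau)$ in the dyadic form given there. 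The decay $2^{(k-K)/2}$ and the Strichartz gains $(\cdot)^{1/10}$ make those far contributions harmless, as in the proof of Corollary \ref{key connection complete}. Actually, I would rather avoid $\varphi$ altogether on $[0,c_2]$, because it is singular at $0$. So the induction below is run on $w_\ast$ itself, starting from radius $c_2$ (i.e. $c_3=c_2$ suffices), with $\square w_\ast = F(u)-\sum_j\zeta_jF(W_{\lambda_j})$.

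The core step is a finite downward induction on thin channels. I split $[0,c_2]$ as $c_2=\alpha_0>\alpha_1>\cdots>\alpha_N=0$ with equal width $\alpha_n-\alpha_{n+1}=\varepsilon$, where $\varepsilon=\varepsilon(J)$ is small and $N=c_2/\varepsilon$ is finite. On $\Psi=\Omega_{\alpha_{n+1},\alpha_n}$ I set $a=\|\chi_\Psi w_\ast\|_Y$ and $b=\|G_\ast\|_{L^2(\alpha_{n+1}<|s|<\alpha_n)}+\|\chi_\Psi\square w_\ast\|_{L^1L^2}$. The estimates are as follows.
\begin{itemize}
 \item An upper bound for $a$ comes from Corollary \ref{lemma split initial data} and Corollary \ref{lemma split data}. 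In the last channel, where $\alpha_N=0$, I use Remark \ref{remark split initial data} instead. The mean term $\alpha_{n+1}^{-1/2}|\int_{-\alpha_{n+1}}^{\alpha_{n+1}}G_\ast|$ is rewritten via the zero-mean identity as an integral over $\alpha_{n+1}<|s|<c_2$, which is controlled by $b$ plus the inductive hypothesis. This gives $a\lesssim b+c(n,J)\lambda^{-1/2}$.
 \item An upper bound for $b$ comes from Lemma \ref{scatter profile of nonlinear solution}, expanding $\square w_\ast$ as in $I_1,\dots,I_7$ but without $\varphi$. The linear term $\|\chi_\Psi W^4w_\ast\|_{L^1L^2}\lesssim\|\chi_\Psi W\|_Y^4a\lesssim\varepsilon^{2/5}a$ is small because the channel is thin, by Lemma \ref{upper bound of W norm}. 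The new source $5\sqrt3\lambda^{-1/2}\|\chi_\Psi W^4\|_{L^1L^2}$ is $\lesssim\lambda^{-1/2}$. The remaining terms are handled by Lemmas \ref{L1L2 channel estimate mixed}, \ref{estimate W J lambda} and \ref{lemma vL bound}, and are $\lesssim_J\lambda^{-1/2}$ or superlinear in $a$.
 \item The a priori smallness $b\lesssim_J\tau$ follows from Remark \ref{square estimate}. It lets me absorb $b^5$ and $\varepsilon^{2/5}b$ into the left side, giving $b\lesssim_{n,J}\lambda^{-1/2}$.
\end{itemize}
After $N$ steps, summing over the channels gives $\|G_\ast\|_{L^2(|s|<c_2)}\lesssim_J\lambda^{-1/2}$, and $M(J)$ is read off.

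The main obstacle is the linear term $5W^4w_\ast$. On an $O(1)$ channel near the bubble its coefficient is not small, so it cannot be absorbed directly. My remedy is the finite thin-channel induction, which exploits the width factor $(r_2-r_1)^{1/10}$ in Lemma \ref{upper bound of W norm} and works uniformly down to $r=0$. The delicate bookkeeping is the finite-speed-of-propagation splitting, needed so that outer contributions enter with decaying weights. I would handle it with the regions $\Psi^\ell$, exactly as in Corollary \ref{key connection complete}.
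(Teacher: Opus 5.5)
Your proposal is correct and uses the same mechanism as the paper. That mechanism consists of channel-localized Strichartz bounds (Corollaries \ref{lemma split initial data} and \ref{lemma split data}, Remark \ref{remark split initial data}), the bound $b\lesssim\|\chi_\Psi\square w_\ast\|_{L^1L^2}$ from Lemma \ref{scatter profile of nonlinear solution} for the non-radiative $w_\ast$, the finite-speed splitting through the regions $\Psi^\ell$, and absorption of $5W^4w_\ast$ via the smallness of $\|\chi_\Psi W\|_{Y(\Rm)}^4$ on thin channels together with the a priori bound $b\lesssim_J\tau$. The difference is organizational: the paper reuses Corollary \ref{key connection complete} for $w$ on $[c_3,c_2]$, pays $c(c_3)\lambda^{-1/2}$ to pass to $w_\ast=w+\sqrt{3}\zeta_{J-1}\lambda^{-1/2}\varphi$, and then needs a single absorption step on $\Omega_{0,c_3}$ with $c_3=c_3(J)$ small, whereas you rerun the thin-channel induction directly on $w_\ast$ down to $0$ (the outer shells add $c(J)\varepsilon^{1/10}\lambda^{-1/10}\tau$ to your bound for $a$, which is why the $\Psi^\ell$ weighting you mention is genuinely needed in estimating $\|\chi_\Psi W^4w_\ast\|_{L^1L^2}$, and the final pointwise bound needs only Cauchy--Schwarz on $(-r,r)$, not the cancellation).
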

\begin{proof}
 Let $c_3 = c_3 (J)< 1$ be a constant to be determined later. Without loss of generality we still assume $\lambda_J=1$ and use the notation $\lambda = \lambda_{J-1}$. We apply Lemma \ref{key connection}, as well as Corollary \ref{key connection complete}, and obtain 
 \begin{align}
  \|G^\ast\|_{L^2(s: 2^k c_2 < |s| < 2^{k+1} c_2)} + \|\chi_{2^k c_2, 2^{k+1} c_2} \square w\|_{L^1 L^2} & \lesssim_{J} 2^\frac{k-K}{2} \tau, & & k=0,1,\cdots,K;  \nonumber\\
  \|\vec{w}(0)\|_{\mathcal{H}(c_1 \lambda)} + \|\chi_{c_1 \lambda}\square w\|_{L^1 L^2} & \lesssim_J \tau; && \nonumber\\
  \|G^\ast\|_{L^2(s: c_3  < |s| < c_2)} + \|\chi_{c_3, c_2} \square w\|_{L^1 L^2} & \lesssim_{J,c_3} \lambda^{-1/2} \tau; & & \label{G up bound}
 \end{align}
 as long as $\tau < \tau(J,c_3)$ is sufficiently small. Here $G^\ast$ is the radiation profile of $\vec{w}(0)$, which is defined in Lemma \ref{key connection}. The constant $c_1$ and positive integer $K$ are also defined in Lemma \ref{key connection}. We observe 
 \[
  w_\ast = w + \zeta_{J-1} \sqrt{3} \lambda^{-1/2} \varphi(x)
 \]
 It follows from Lemma \ref{linearized elliptic} and Remark \ref{remark delta varphi} that the radiation profile $G_\ast$ of $\vec{w}_\ast(0)$ and $\square w_\ast$ satisfy
 \begin{align}
  \|G_\ast-G^\ast\|_{L^2(s: |s|>c_3)} & \lesssim_{c_3} \lambda^{-1/2}; \label{G diff bound}\\
  \|\chi_{c_3}(\square w_\ast - \square w)\|_{L^1 L^2} & \lesssim_1 \lambda^{-1/2}. \nonumber
 \end{align}
 We define 
 \begin{align*}
  &a = \|\chi_{0, c_3} w_\ast\|_{Y(\Rm)}; &
  &b = \|G_\ast\|_{L^2(-c_3,c_3)} + \|\chi_{0, c_3} \square w_\ast\|_{L^1 L^2};
 \end{align*}
 and ($\ell = 0,1,2,\cdots,K+1$)
 \begin{align*}
  &\Psi^\ell = \left\{(x,t): |t|< |x|<c_3 +|t|, |x|+|t|<c_2 2^\ell\right\}; & & a_\ell = \|\chi_{\Psi^\ell} w_\ast\|_{Y(\Rm)}. 
 \end{align*}
 Let us first give a rough upper bound of $b$. We may apply Proposition \ref{main tool} and Remark \ref{square estimate} to deduce 
 \begin{align} \label{first upper bound of b3}
  b \lesssim_1 \|\vec{w}_\ast(0)\|_{\dot{H}^1\times L^2} + \|\chi_0 \square w_\ast\|_{L^1 L^2} \lesssim_J \tau. 
 \end{align}
 Next we give more dedicate upper bounds of $b$ and $a$. In order to give an upper bound of $a$, we let $w_1$, $w_2$ be the radial free wave with radiation profiles $G_1, G_2$ below, respectively,
 \begin{align*}
  & G_1(s) = \left\{\begin{array}{ll} G_\ast(s), & |s|<c_3; \\ G^\ast(s), & |s|>c_3; \end{array}\right. & 
  & G_2(s) = \left\{\begin{array}{ll} 0, & |s|<c_3; \\ G_\ast(s) - G^\ast(s), & |s|>c_3; \end{array}\right.
 \end{align*}
 and $w_3$, $w_4$ be the solution to the following wave equation with zero initial data, respectively, 
 \begin{align*}
  & \square w_3 = \left\{\begin{array}{ll} 0, & |x|<|t|; \\ \square w_\ast, & (x,t)\in \Omega_{0,c_3}; \\ \square w, & (x,t) \in \Omega_{c_3}; \end{array}\right. & 
  \square w_4 = \left\{\begin{array}{ll} 0, & |x|<|t|; \\ 0, & (x,t)\in \Omega_{0,c_3}; \\ \square w_\ast - \square w, & (x,t) \in \Omega_{c_3}. \end{array}\right.
 \end{align*}
 It follows from the finite speed of propagation that 
 \[
  w_\ast(x,t) = w_1(x,t) + w_2(x,t) + w_3(x,t) + w_4(x,t), \qquad (x,t) \in \Psi \doteq \Omega_{0,c_3}. 
 \]
  We then apply Remark \ref{remark split initial data} on $w_1$, Corollary \ref{lemma split data} on $w_3$, and the regular Strichartz estimates on $w_2$, $w_4$ to deduce
 \begin{align*}
  a & \leq \|\chi_\Psi w_1\|_{Y(\Rm)} + \|\chi_\Psi w_2\|_{Y(\Rm)} + \|\chi_\Psi w_3\|_{Y(\Rm)} + \|\chi_\Psi w_4\|_{Y(\Rm)} \\
  & \lesssim_1 \|G_\ast\|_{L^2(-c_3,c_3)} + \|G^\ast\|_{L^2(\{s: c_3<|s|<c_2)\}} + \sum_{k=0}^{K} \left(\frac{c_3}{2^k c_2}\right)^{1/10} \|G^\ast\|_{L^2(\{s: 2^k c_2 < |s| < 2^{k+1} c_2\})} \\
  & \qquad + \left(\frac{c_3}{2^{K+1} c_2}\right)^{1/10} \|G^\ast\|_{L^2(\{s: |s| > 2^{K+1} c_2\})} + \|G_\ast - G^\ast\|_{L^2(\{s: |s|>c_3\})}\\
  & \qquad + \|\chi_\Psi \square w_\ast\|_{L^1 L^2} + \|\chi_{c_3,c_2} \square w\|_{L^1 L^2} + \sum_{k=0}^K \left(\frac{c_3}{2^k c_2}\right)^{1/10} \|\chi_{2^k c_2, 2^{k+1} c_2} \square w\|_{L^1 L^2}\\
  & \qquad + \left(\frac{c_3}{2^{K+1} c_2}\right)^{1/10} \|\chi_{2^{K+1} c_2} \square w\|_{L^1 L^2} + \|\chi_{c_3} (\square w_\ast - \square w)\|_{L^1 L^2}\\
  & \lesssim_1 b + c(J,c_3)\lambda^{-1/2} \tau + \sum_{k=0}^K  \left(\frac{c_3}{2^k c_2}\right)^{1/10} c(J) 2^\frac{k-K}{2} \tau + \left(\frac{c_3}{2^{K+1} c_2}\right)^{1/10} c(J) \tau + c(c_3) \lambda^{-1/2} \\
  & \lesssim_1 b+ c(J,c_3) \lambda^{-1/2} + c(J) c_3^{1/10} \lambda^{-1/10} \tau. 
 \end{align*}
 Here we use $2^K \simeq_J \lambda$. Similarly we may incorporate the finite propagation speed and obtain 
 \begin{align*}
  a_\ell  \lesssim_1 b + c(J,c_3)\lambda^{-1/2} \tau + \sum_{k=0}^{\ell-1}  \left(\frac{c_3}{2^k c_2}\right)^{1/10} c(J) 2^\frac{k-K}{2} \tau + c(c_3) \lambda^{-1/2}
  \end{align*}
 Thus 
 \begin{align*}
 a_0 & \lesssim_1 b + c(J,c_3)\lambda^{-1/2}; & & \\
 a_\ell & \lesssim_1 b + c(J,c_3) \lambda^{-1/2} + c(J) c_3^{1/10} 2^{-\frac{\ell-1}{10}} 2^\frac{\ell-1-K}{2} \tau, & & \ell = 1,2,\cdots, K+1
 \end{align*}
 Now we give an upper bound of $b$. We apply Lemma \ref{scatter profile of nonlinear solution} on $w_\ast$ and obtain 
 \begin{align*}
  b & \lesssim_1 \|\chi_{0,c_3}\square w_\ast\|_{L^1 L^2} \\
  & \lesssim_1 \left\|\chi_{0,c_3}\left(F\left(w_\ast + v_L + \sum_{j=1}^J \zeta_j W_{\lambda_j}\right) - \sum_{j=1}^J \zeta_j F(W_{\lambda_j})\right)\right\|_{L^1 L^2} \\
  & \lesssim_1 I_1 + I_2 + I_3 + I_4;
 \end{align*}
 with
 \begin{align*}
  I_1 & = c(J) \left\|\chi_{0,c_3} W^4 w_\ast\right\|_{L^1 L^2}; \\
  I_2 & = c(J) \left\|\chi_{0,c_3} \left(|w_\ast|^5 + |v_L|^4 |w_\ast| + \sum_{j=1}^{J-1} W_{\lambda_j}^4 |w_\ast|\right)\right\|_{L^1 L^2};\\
  I_3 & = c(J) \left\|\chi_{0,c_3}\left(|v_L|^5 + \sum_{j=1}^J W_{\lambda_j}^4 |v_L|\right)\right\|_{L^1 L^2}; \\
  I_4 & = c(J) \left\|\chi_{0,c_3} \sum_{1\leq j< m\leq J}\left(W_{\lambda_j}^4 W_{\lambda_m} + W_{\lambda_j} W_{\lambda_m}^4\right)\right\|_{L^1 L^2}. 
 \end{align*}
 Following the same argument as in proof of Corollary \ref{key connection complete}, we obtain 
 \begin{align*}
 I_1 & \lesssim_J \left\|\chi_{\Psi^{0}} W^4 w_\ast\right\|_{L^1 L^2} + \sum_{\ell = 0}^{K} \left\|\chi_{\Psi^{\ell+1}\setminus \Psi^{\ell}} W^4 w_\ast\right\|_{L^1 L^2} + \left\|\chi_{\Omega_{0,c_3} \setminus \Psi^{K+1}} W^4 w_\ast\right\|_{L^1 L^2} \\
 & \lesssim_J a_{0} \|\chi_{0,c_3}W\|_{Y(\Rm)}^4 + \sum_{\ell = 0}^{K} \|\chi_{\Omega_{0,c_3} \setminus \Psi_{\ell}} W\|_{Y(\Rm)}^4 a_{\ell+1} + \|\chi_{\Omega_{0,c_3} \setminus \Psi_{K+1}} W\|_{Y(\Rm)}^4  a\\
 & \lesssim_J c_3^{2/5} a_{0}  + \sum_{\ell = 0}^{K} c_3^{2/5} 2^{-12\ell/5} a_{\ell+1} + c_3^{2/5} 2^{-12K/5}  a\\
 & \lesssim_J c_3^{2/5} \left(b+c(J,c_3) \lambda^{-1/2}\right) + \sum_{\ell = 0}^{K} c_3^{2/5} 2^{-12\ell/5}\left(b + c(J,c_3) \lambda^{-1/2} + c(J) c_3^{1/10} 2^{-\frac{\ell}{10}} 2^\frac{\ell-K}{2} \tau\right)\\
  & \qquad + c_3^{2/5} 2^{-12K/5} \left(b+ c(J,c_3) \lambda^{-1/2} + c(J) c_3^{1/10} \lambda^{-1/10} \tau\right)\\
 & \lesssim_J c_3^{2/5} b + c(c_3,J) \lambda^{-1/2}. 
 \end{align*}
 In addition, we apply Lemma \ref{upper bound of W norm}, Lemma \ref{lemma vL bound} and obtain
 \begin{align*}
  I_2 & \lesssim_J \|\chi_{0,c_3} w_\ast\|_{Y(\Rm)}^5 + \|\chi_{0,c_3} v_L\|_{Y(\Rm)}^4 \|\chi_{0,c_3} w_\ast\|_{Y(\Rm)} + \sum_{j=1}^{J-1} \|\chi_{0,c_3} W_{\lambda_j}\|_{Y(\Rm)}^4 \|\chi_{0,c_3} w_\ast\|_{Y(\Rm)}\\
  & \lesssim_J a^5 + \left(\frac{c_3}{\lambda}\right)^{2/5} \tau^4 a + \left(\frac{c_3}{\lambda}\right)^{2/5} a \\
 & \lesssim_J \left[b^5 + c(c_3,J) \lambda^{-1/2}\right] + \left[\tau^4 b + c(c_3,J) \lambda^{-1/2} \tau^4\right]  + \left[(c_3/\lambda)^{2/5} b + c(c_3,J) \lambda^{-1/2}\right]\\
 & \lesssim_J \left(b^4 + \tau^4 + c_3^{2/5} \lambda^{-2/5} \right) b + c(c_3,J) \lambda^{-1/2}\\
 & \lesssim_J \left(\tau^4 + c_3^{2/5}\right) b + c(c_3,J) \lambda^{-1/2}. 
 \end{align*}
 Here we utilize \eqref{first upper bound of b3} in the last step. By Lemma \ref{lemma vL bound} we also have
 \begin{align*}
  I_3 & \lesssim_J \|\chi_{0,c_3} v_L\|_{Y(\Rm)}^5 + \sum_{j=1}^{J-1} \|\chi_{0,c_3} W_{\lambda_j}\|_{Y(\Rm)}^4 \|\chi_{0,c_3} v_L\|_{Y(\Rm)} + \left\|\chi_{0,c_3} W^4 v_L\right\|_{L^1 L^2}\\
  &\lesssim_J \left(\frac{c_3}{\lambda}\right)^{1/2} \tau^5 + \left(\frac{c_3}{\lambda}\right)^{1/2} \tau + c_3^{1/2} \lambda^{-1/2} \tau \\
   & \lesssim_J \lambda^{-1/2}. 
 \end{align*}
 Finally we combine Lemma \ref{L1L2 channel estimate mixed} and the dilation invariance to deduce 
 \begin{align*}
  I_4 & \lesssim_J \sum_{1\leq j < m\leq J} \left[\left(\frac{\lambda_j}{\lambda_m}\right)^{-1} \left(\frac{c_3}{\lambda_m}\right)^{1/2} + \left(\frac{\lambda_j}{\lambda_m}\right)^{-1/2}\left(\frac{c_3}{\lambda_m}\right)^{1/2}\right] \\
  & \lesssim_J  \sum_{1\leq j < m\leq J} c_3^{1/2} \lambda_j^{-1/2}  \lesssim_J \lambda^{-1/2}. 
 \end{align*}
 In summary, we have 
 \begin{equation} \label{b recurrence 3} 
  b \leq c^\ast \left(c_3^{2/5} + \tau^4\right) b + c(c_3,J) \lambda^{-1/2}. 
 \end{equation} 
 Here $c^\ast$ is a constant solely determined by $J$. We choose a small constant $c_3 = c_3(J)$ such that $c^\ast c_3^{2/5} < 1/4$ and let $\tau_2 =  \tau_2 (J)$ be sufficiently small such that $c^\ast \tau_2^4 < 1/4$ (and that all the argument above works if $\tau < \tau_2$, please note that now $\tau(J,c_3)$ depends on $J$ only). As a result, if $\tau < \tau_2$, then it immediately follows from \eqref{b recurrence 3} that 
 \[
  b \lesssim_{J} \lambda^{-1/2}. 
 \]
 A combination of this upper bound with the upper bounds \eqref{G up bound} and \eqref{G diff bound} then yields 
 \[
  \|G_\ast\|_{L^2(-c_2,c_2)} \lesssim_J \lambda^{-1/2},
 \]
 which immediately gives the conclusion by the following estimates
 \[
   r^{1/2} |w_\ast (r,0)| = \frac{1}{r^{1/2}} \left|\int_{-r}^r G_\ast (s) {\rm d} s\right| \lesssim_1 \|G_\ast\|_{L^2(-r,r)} \lesssim_J \lambda^{-1/2}, \quad \forall r\in (0,c_2). 
 \]
\end{proof}

\section{Proof of main theorem}

In this section we first give the radiation concentration property of $J$-bubble solutions and then prove the main theorem of this work as an application. 

\subsection{Radiation concentration}

\begin{proposition} \label{prop concentration}
 Given a positive integer $J$, there exists a small constant $\tau_3=\tau_3(J)>0$, such that any radial $J$-bubble exterior solution $u$ to (CP1) must satisfy 
\[
 \tau \doteq \left(\sup_{0<r<\lambda_{J-1}} \frac{\lambda_{J-1}}{r} \int_{-r}^r |G(s)|^2 {\rm d} s\right)^{1/2}  + \|\chi_0 v_L\|_{Y(\Rm)}  + \sup_{r>0} \frac{1}{r^{1/2}} \int_{-r}^r |G(s)| {\rm d} s  \geq \tau_3.
\]
Here $v_L$ is the radiation part of $u$; $G$ is the corresponding radiation profile of $v_L$; $\lambda_{J-1}$ is the size of $(J-1)$-th bubble given by Proposition \ref{main tool}.
\end{proposition}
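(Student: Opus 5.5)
We argue by contradiction. Suppose $\tau<\tau_3$ with $\tau_3\le \tau_2(J)$ to be chosen (for $J=1$ the quantity involving $\lambda_{J-1}=\lambda_0=+\infty$ is read suitably; we concentrate on $J\geq 2$, which is the case used later). By dilation invariance we normalize $\lambda_J=1$ and write $\lambda=\lambda_{J-1}$. By Remark \ref{uniform scale ratio}, $\lambda^{-1/2}\lesssim_J \tau$ is small, so every hypothesis of Lemma \ref{J bubble upper bound inner} holds.

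The plan is to compare two descriptions of $w_\ast=u-\sum_j\zeta_jW_{\lambda_j}-v_L$ at time $0$ near the origin. First, Lemma \ref{J bubble upper bound inner} gives the upper bound
\[
 \sup_{0<r<c_2} r^{1/2}|w_\ast(r,0)|\le M\lambda^{-1/2}.
\]
Second, Lemma \ref{key connection} and Corollary \ref{key connection complete} (with a fixed small $c_3$) say that $w=w_\ast-\sqrt3\zeta_{J-1}\lambda^{-1/2}\varphi$ is much smaller than $\lambda^{-1/2}\varphi$ away from the origin. By Lemma \ref{linearized elliptic}, $\varphi$ has the singular behaviour $|\varphi(r)|\ge |\mu_0|/(2r)$ for $r<r_4$. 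Near the origin we therefore expect $r^{1/2}|w_\ast(r,0)|\gtrsim \lambda^{-1/2} r^{-1/2}|\mu_0|$, which is incompatible with the upper bound once $r\ll M^{-2}\mu_0^2$.

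The main obstacle is that Corollary \ref{key connection complete} controls $G^\ast$ only on $|s|>c_3$, whereas we need information for $r\to 0$. I would first try to repeat the bootstrap of Lemma \ref{J bubble upper bound inner} for $w$ itself on a channel $\Omega_{0,c_3}$. Near the origin, $\square w$ contains the term $5W^4 w$ together with terms of size $\lambda^{-1/2}$ times $W^4\cdot(\text{stuff})$. The difficulty is that $\varphi\notin \dot H^1$ near $0$, so $w$ is not a finite-energy object there; this is why the comparison has to be organized through $w_\ast$.

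Here is the cleaner route I would commit to. Work with $w_\ast$, which is in $\dot H^1\times L^2$. From the proof of Lemma \ref{J bubble upper bound inner} we have $\|G_\ast\|_{L^2(-c_2,c_2)}\lesssim_J\lambda^{-1/2}$ and $\|\chi_0\square w_\ast\|_{L^1L^2}\lesssim\tau$. Now decompose
\[
 \square w_\ast = 5W^4 w_\ast+5\sqrt3\zeta_{J-1}\lambda^{-1/2}W^4+E.
\]
The estimates $I_2,\dots,I_7$ give $\|\chi_{0,c_2}E\|_{L^1L^2}=o(\lambda^{-1/2})$ as $\tau\to0$; for this, check that every lower-order term carries either an extra factor $\tau$ or an extra negative power of $\lambda$.

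Rescale $\tilde w=\lambda^{1/2}w_\ast/(\sqrt3\zeta_{J-1})$. Then $\tilde w$ is bounded in energy near the cone, in the sense that $r^{1/2}|\tilde w|\le C M$. It also solves the linearized equation $\square\tilde w=5W^4(\tilde w+1)$ up to an $o(1)$ error in $L^1L^2(\Omega_{0,c_2})$. Moreover it is $c_3$-weakly almost non-radiative, since its radiation profile on $|s|>c_3$ is $o(1)$-close to that of $\varphi$ by Corollary \ref{key connection complete}.

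Argue by compactness: take a sequence with $\tau\to0$. Then $\tilde w$ converges (for example weakly, restricted to $t=0$ and $|x|>0$) to a solution of the linear equation in $\Omega_{0}$ that is non-radiative for $|s|>c_3$ relative to $\varphi$ and has finite energy in $\Omega_0$. Linear channel-of-energy rigidity for the operator $\partial_t^2-\Delta-5W^4$ (radially, non-radiative solutions are spanned by $\Lambda W$ and $r^{-1}$-type objects) then forces the limit to equal $\varphi+\beta r^{-1}v/r$-type corrections at $t=0$. This is exactly the ODE analysis of Lemma \ref{linearized elliptic}: any radial solution of $-\Delta\phi=5W^4(\phi+1)$ agreeing with $\varphi$ for $r>c_3$ equals $\varphi$. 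Hence the limit is singular like $\mu_0/r$ at the origin, contradicting $r^{1/2}|\tilde w(r,0)|\le CM$ as $r\to0$.

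Justifying this rigidity step without compactness losses is the step I expect to be hardest. If it fails, the fallback is a direct ODE/Duhamel comparison on $(0,c_3)$ using the explicit formula $w_\ast(r,0)=r^{-1}\int_{-r}^rG_\ast$, together with the bound $\|G_\ast\|_{L^2(-c_2,c_2)}\lesssim\lambda^{-1/2}$ and its $o(\lambda^{-1/2})$ refinement.
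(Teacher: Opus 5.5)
The heuristic in your second paragraph is already the whole proof. The ``main obstacle'' you name next is not real, and that misjudgment sends you down a route with a genuine gap.

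Corollary \ref{key connection complete} holds for \emph{every} fixed $c_3<c_2$; only the threshold $\tau_1(J,c_3)$ depends on $c_3$. Also, $M=M(J)$ from Lemma \ref{J bubble upper bound inner} is fixed before any choice of $c_3$. So you never need information as $r\to0$. Normalize $\lambda_J=1$ and write $\lambda=\lambda_{J-1}$. By Lemma \ref{linearized elliptic} you have $|\varphi(r)|\ge|\mu_0|/(2r)$ on $(0,r_4)$, so you can choose $c_4=c_4(J)<\min\{c_2,r_4\}$ with $\sqrt3\,c_4^{1/2}|\varphi(c_4)|>2M$, and then apply the corollary with $c_3=c_4$. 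Remark \ref{uniform scale ratio} together with $\varphi(c_2)=0$ gives $w(c_2,0)=0$. The explicit formula then gives $c_4w(c_4,0)=-\int_{c_4<|s|<c_2}G^\ast(s)\,{\rm d}s$. Cauchy--Schwarz and the third estimate of the corollary yield $c_4^{1/2}|w(c_4,0)|\lesssim_J\lambda^{-1/2}\tau$. Now use $w-w_\ast=-\sqrt3\zeta_{J-1}\lambda^{-1/2}\varphi$ and $c_4^{1/2}|w_\ast(c_4,0)|\le M\lambda^{-1/2}$. The triangle inequality gives $2M\lambda^{-1/2}<C(J)\lambda^{-1/2}\tau+M\lambda^{-1/2}$, the common factor $\lambda^{-1/2}$ cancels, and $\tau\ge M/C(J)$. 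This single-radius comparison at $t=0$ is exactly the paper's proof.

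The compactness/rigidity route you actually commit to does not close.
\begin{itemize}
\item The compactness step is never set up: you do not say in which topology the sequence converges or what the limiting problem is. The energy of $\tilde w$ is only bounded by $\lambda^{1/2}\tau$, which need not stay bounded, so only local bounds survive.
\item The rigidity statement for $\partial_t^2-\Delta-5W^4$ is asserted, not proved, and is not available in the paper.
\item Even granting rigidity, your limit is only known to be non-radiative for $|s|>c_3$. Rigidity can therefore identify it only in $\Omega_{c_3}$, i.e.\ at $t=0$ for $r>c_3$. Nothing forces the limit to be stationary, hence to solve $-\Delta\phi=5W^4(\phi+1)$, on $r<c_3$. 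So the ODE-uniqueness step meant to carry the $\mu_0/r$ singularity down to the origin is unjustified, and the route does not overcome the very obstacle you named.
\end{itemize}
The information you already have at $t=0$ for $r\in(c_3,c_2)$, from the corollary and $w(c_2,0)=0$ with no rigidity at all, gives the contradiction once $c_3$ is chosen small in terms of $M(J)$. The bound $\|\chi_{0,c_2}E\|_{L^1L^2}=o(\lambda^{-1/2})$ is also left unverified, and it is not needed.
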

\begin{proof}
 Let $\tau_2 = \tau_2(J)$ and $M = M(J)$ be the constants given in Lemma \ref{J bubble upper bound inner}. Let us consider the approximated solution $w$ and $w_\ast$ given in Corollary \ref{key connection complete} and Lemma \ref{J bubble upper bound inner} respectively. We recall the asymptotic behaviour $|\varphi(r)|\gtrsim_1 r^{-1}$ near the zero (see Lemma \ref{linearized elliptic}) and obtain 
 \[
  r^{1/2} |w(r,0) - w_\ast(r,0)| = r^{1/2} \sqrt{3} \lambda_{J-1}^{-1/2} |\varphi(r/\lambda_J)| \gtrsim_1 \left(\frac{r}{\lambda_J}\right)^{-1/2} \left(\frac{\lambda_{J}}{\lambda_{J-1}}\right)^{1/2},
 \]
 as long as $r/\lambda_J \ll 1$ is sufficiently small. Thus we may choose a small constant $c_4 = c_4(J)< c_2$ such that 
 \begin{equation} \label{diff w wast last}
  (c_4 \lambda_J)^{1/2} |w(c_4 \lambda_J,0) - w_\ast(c_4 \lambda_J,0)| > 2 M(J) \left(\frac{\lambda_{J}}{\lambda_{J-1}}\right)^{1/2}. 
 \end{equation} 
 Now let $\tau_1 = \tau_1(J,c_4)$ is the constant given in Corollary \ref{key connection complete}, which is unique determined by $J$. If $u$ is a $J$-bubble exterior solution with $\tau < \min\{\tau_1, \tau_2\}$, then it follows from Corollary \ref{key connection complete} and Lemma \ref{J bubble upper bound inner} that
 \begin{align}
  \|G^\ast\|_{L^2(s: c_4 \lambda_J < |s| < c_2 \lambda_J)} & \leq C \left(\frac{\lambda_{J}}{\lambda_{J-1}}\right)^{1/2} \tau;\nonumber\\
  (c_4 \lambda_J)^{1/2} |w_\ast(c_4 \lambda_J,0)| & \leq M(J) \left(\frac{\lambda_{J}}{\lambda_{J-1}}\right)^{1/2}. \label{wast last}
 \end{align}
 Here $C=C(J,c_4)$ is a constant solely determined by $J$; $G^\ast$ is the radiation profile of $\vec{w}(0)$. Now we may apply the explicit formula of initial data in term of radiation profile to deduce 
 \begin{align*}
  (c_4 \lambda_J) w(c_4 \lambda_J,0) = (c_2 \lambda_J) w(c_2 \lambda_J,0) - \int_{c_4 \lambda_J <|s|<c_2 \lambda_J} G^\ast (s) {\rm d} s. 
 \end{align*}
 Since $w(c_2 \lambda_J,0) = w_\ast(c_2 \lambda_J,0) = 0$, we have 
 \begin{align*}
  \left|(c_4 \lambda_J) w(c_4 \lambda_J,0)\right| & \leq \int_{c_4 \lambda_J <|s|<c_2 \lambda_J} |G^\ast (s)| {\rm d} s\\
  & \leq \sqrt{2} (c_2-c_4)^{1/2} \lambda_J^{1/2} \|G^\ast(s)\|_{L^2(\{s: c_4 \lambda_J < |s| < c_2 \lambda_J\})}\\
  & \leq \sqrt{2} C (c_2-c_4)^{1/2} \lambda_J^{1/2} \left(\frac{\lambda_{J}}{\lambda_{J-1}}\right)^{1/2} \tau. 
 \end{align*}
 This implies 
 \[
  (c_4 \lambda_J)^{1/2} \left|w(c_4 \lambda_J,0)\right| \leq \sqrt{2} C \left(\frac{c_2-c_4}{c_4}\right)^{1/2}\left(\frac{\lambda_{J}}{\lambda_{J-1}}\right)^{1/2} \tau. 
 \]
 A combination of this with \eqref{diff w wast last} and \eqref{wast last} yields
 \[
   M(J) \left(\frac{\lambda_{J}}{\lambda_{J-1}}\right)^{1/2} \leq \sqrt{2} C \left(\frac{c_2-c_4}{c_4}\right)^{1/2}\left(\frac{\lambda_{J}}{\lambda_{J-1}}\right)^{1/2} \tau,
 \]
 which implies 
 \[
  \tau \geq  \frac{M(J)}{\sqrt{2} C} \left(\frac{c_4}{c_2-c_4}\right)^{1/2}. 
 \]
 Here the lower bound depends on $J$ only. In summary, any $J$-bubble exterior solution satisfies 
 \[
  \tau \geq \tau_3(J) \doteq \min\left\{\tau_1, \tau_2, \frac{M(J)}{\sqrt{2} C} \left(\frac{c_4}{c_2-c_4}\right)^{1/2} \right\},
 \] 
 which completes the proof. 
\end{proof}

\subsection{Proof of the main theorem}

The rest of this section is devoted to the proof of the main theorem. According to the soliton resolution theorem given in Duyckaerts-Kenig-Merle \cite{se}, it suffices to exclude all situations of soliton resolution with two or more bubbles. The proof of the global case and the type II blow-up case is similar. We consider the global case first.

\paragraph{The global case} If the soliton resolution of a global solution $u$ came with $J$ bubbles for some positive integer $J\geq 2$, we would give a contradiction. Let us first show the existence of nonlinear radiation profile of $u$. Let $R>0$ be a large number such that $\|\vec{u}(0)\|_{\mathcal{H}(R)} \ll 1$. By the small data theory, a standard cut-off technique and finite speed of propagation, there exists a finite-energy free wave $u_{L}^-$ such that 
\begin{align*}
 \lim_{t\rightarrow -\infty} \int_{|x|>R+|t|} |\nabla_{t,x}(u-u_L^-)(x,t)|^2 {\rm d} x =0. 
\end{align*}
Let $G_-$ be the radiation profile of $u_L^-$ in the negative time direction. This immediately gives the (nonlinear) radiation profile in the negative time direction 
\[
 \lim_{t\rightarrow -\infty} \int_{R-t}^\infty \left(\left|G_-(r+t) - r u_t(r,t)\right|^2 +  \left|G_-(r+t) - r u_r(r,t)\right|^2\right) {\rm d} r = 0. 
\]
Next we consider the positive time direction. According to Lemma 3.7 of Duyckaerts-Kenig-Merle \cite{se}, there exists a finite-energy free wave $u_L$, such that
\[
 \lim_{t\rightarrow +\infty} \int_{|x|>t-A} |\nabla_{t,x}(u-u_L)(x,t)|^2 {\rm d} x =0, \qquad \forall A \in \Rm. 
\]
Thus the radiation profile $G_+$ of $u_L$ in the positive time direction becomes the (nonlinear) radiation profile of $u$, i.e. 
\[
 \lim_{t\rightarrow +\infty} \int_{t-A}^\infty \left(\left|G_+(r-t) - r u_t(r,t)\right|^2 +  \left|G_+(r-t) + r u_r(r,t)\right|^2\right) {\rm d} r = 0, \qquad \forall A \in \Rm. 
\]
Let us consider the time-translated solution $u(\cdot, \cdot+t)$ for $t>R$, which is defined at least outside the main light cone, whose (nonlinear) radiation profiles can be given by 
\begin{align} \label{relationship G Gt}
 &G_{t,+}(s) = G_+(s-t), \quad s > 0;& & G_{t,-}(s) = G_-(s+t), \quad s>0. 
\end{align}
We let $v_{t,L}$ be the free wave whose radiation profiles in two time directions are equal to $G_{t,+}$ and $G_{t,-}$ for $s>0$, respectively. It is not difficult to see that $v_{t,L}$ is exactly the asymptotically equivalent free wave of $u$. We claim that the following limit holds 
\begin{equation} \label{global part tau bound}
 \lim_{t\rightarrow +\infty} \left(\|\chi_0 v_{t,L}\|_{Y(\Rm)} + \sup_{r>0} \frac{1}{r^{1/2}} \int_{0}^r \left(|G_{t,+}(s)| +|G_{t,-}(s)| \right) {\rm d} s\right) = 0. 
\end{equation} 
In fact, for any small constant $\varepsilon > 0$, we may find an interval $[a,b] \subset \Rm$ and a large number $T_0 \geq R$, such that  
\begin{align*}
 &\|G_+\|_{L^2(\Rm\setminus [a,b])} < \varepsilon; & & \|G_-\|_{L^2([T_0,+\infty))} < \varepsilon. 
\end{align*}
Now let us consider very large time $t \gg \max\{T_0,-a\}$. According to Lemma \ref{calculation 1}, we have 
\[
 \|\chi_0 v_{t,L}\|_{Y(\Rm)} \lesssim_1 \varepsilon + \left(\frac{b-a}{t+a}\right)^{1/2} \|G_+\|_{L^2([a,b])}.
\]
In addition, we also have 
\begin{align*}
 \frac{1}{r^{1/2}} \int_{0}^r \left(|G_{t,+}(s)| +|G_{t,-}(s)| \right) {\rm d} s = \frac{1}{r^{1/2}} \left(\int_{-t}^{-t+r} |G_+(s)| {\rm d} s + \int_{t}^{t+r} |G_-(s)| {\rm d} s\right). 
\end{align*}
If $r<t+a$, then the interval $[-t,-t+r]$ does not intersects with $[a,b]$, thus the Cauchy-Schwarz inequality gives 
\[
 \frac{1}{r^{1/2}} \int_{0}^r \left(|G_{t,+}(s)| +|G_{t,-}(s)| \right) {\rm d} s  \lesssim_1 \varepsilon. 
\]
On the other hand, if $r > t+a$, then we have 
\[ 
 \frac{1}{r^{1/2}} \int_{0}^r \left(|G_{t,+}(s)| +|G_{t,-}(s)| \right) {\rm d} s  \lesssim_1 \varepsilon + \frac{1}{r^{1/2}} \int_a^b |G_+(s)| {\rm d} s  
\]
In summary, we have 
\[
 \sup_{r>0} \left(\frac{1}{r^{1/2}} \int_{0}^r \left(|G_{t,+}(s)| +|G_{t,-}(s)| \right) {\rm d} s\right)  \lesssim_1 \varepsilon + \left(\frac{b-a}{t+a}\right)^{1/2} \|G_+\|_{L^2([a,b])}. 
\]
As a result, the following inequality holds 
\[
  \limsup_{t\rightarrow +\infty} \left(\|\chi_0 v_{t,L}\|_{Y(\Rm)} + \sup_{r>0} \frac{1}{r^{1/2}} \int_{0}^r \left(|G_{t,+}(s)| +|G_{t,-}(s)| \right) {\rm d} s\right) \lesssim_1 \varepsilon. 
\]
Since $\varepsilon>0$ is arbitrary, the limit \eqref{global part tau bound} immediately follows. In addition, our assumption that the soliton resolution of $u$ comes with $J$ bubbles implies that the time translated solution $u(\cdot,\cdot+t)$ is a $J$-bubble exterior solution as defined at the end of Section 3 for sufficiently large time. In fact, almost orthogonality of decoupled bubbles imply that the following energy estimates hold as long as $t$ is sufficiently large: 
\begin{itemize}
 \item If the soliton resolution of $u(\cdot,\cdot +t)$ given by Proposition \ref{main tool} (with $n=J+1$) is incomplete(i.e. in case b), then 
 \[
  \left\|\vec{u}(t) - \vec{v}_{t,L}(0)\right\|_{\dot{H}^1\times L^2}^2 > J \|W\|_{\dot{H}^1}^2 + \frac{1}{2} \|W\|_{\dot{H}^1(\{x: |x|>c_2\})}^2. 
 \]
 \item If the soliton resolution of $u(\cdot,\cdot +t)$ given by Proposition \ref{main tool} comes with exactly $J_1$ bubbles for some $J_1\in \{0,1,\cdots,J\}$, then 
 \[
  \left|\left\|\vec{u}(t) - \vec{v}_{t,L}(0)\right\|_{\dot{H}^1\times L^2}^2 - J_1 |W\|_{\dot{H}^1}^2\right| < \frac{1}{2} \|W\|_{\dot{H}^1(\{x: |x|>c_2\})}^2. 
 \]
\end{itemize}
A comparison of radiation profiles also shows that 
\[
 \|\vec{v}_{t,L}(0)-\vec{u}_L(t)\|_{\dot{H}^1\times L^2} \lesssim_1 \|G_-\|_{L^2([t,+\infty))} + \|G_+\|_{L^2((-\infty,-t])} \rightarrow 0, \quad t\rightarrow +\infty,
\]
which, as well as the soliton resolution as $t\rightarrow +\infty$, implies that 
\[
 \lim_{t\rightarrow +\infty} \|\vec{u}(t) - \vec{v}_{t,L}(0)\|_{\dot{H}^1\times L^2} = \lim_{t\rightarrow +\infty} \|\vec{u}(t) - \vec{u}_{L}(t)\|_{\dot{H}^1\times L^2} = \sqrt{J} \|W\|_{\dot{H}^1}. 
\]
As a result, $u(\cdot, \cdot+t)$ must be a $J$-bubble exterior solution for sufficiently large time $t$. It immediately follows from Proposition \ref{prop concentration} that 
\begin{align*}
 & \left(\sup_{0<r<\lambda_{J-1}(t)} \frac{\lambda_{J-1}(t)}{r} \int_{0}^r \left(|G_{t,+}(s)|^2 + |G_{t,-}(s)|^2 \right) {\rm d} s\right)^{1/2} + \|\chi_0 v_{t,L}\|_{Y(\Rm)} \\
  & \qquad + \sup_{r>0} \left(\frac{1}{r^{1/2}} \int_{0}^r \left(|G_{t,+}(s)| +|G_{t,-}(s)| \right) {\rm d} s \right) \geq \tau_3, \qquad \forall t \gg 1.
\end{align*}
Here $\lambda_{J-1}(t)$ is the size of the $(J-1)$-th bubble, as given in Proposition \ref{main tool}. Combining this with \eqref{relationship G Gt} and \eqref{global part tau bound}, we obtain 
\[
 \left(\sup_{0<r<\lambda_{J-1}(t)} \frac{\lambda_{J-1}(t)}{r} \int_{0}^r \left(|G_+(s-t)|^2 + |G_-(s+t)|^2 \right) {\rm d} s\right)^{1/2} > \frac{\tau_3}{2}, \qquad t\gg 1. 
\]
Now we let $g_+$ and $g_-$ be the (right hand side) maximal functions of $|G_+|^2$, $|G_-|^2$ respectively 
\begin{align} \label{def of g plus minus}
 & g_+(-t) = \sup_{r>0} \frac{1}{r} \int_{-t}^{-t+r} |G_+(s)|^2 {\rm d} s; & & g_-(t) = \sup_{r>0} \frac{1}{r} \int_{t}^{t+r} |G_-(s)|^2 {\rm d} s. 
\end{align}
The lower bound above implies that the inequality
\begin{equation} \label{lower bound of g plus minus}
 g_+(-t) + g_-(t) \geq \frac{\tau_3^2}{4} \lambda_{J-1}(t)^{-1}
\end{equation} 
holds for all time $t> T$. Here $T\geq R$ is a large time. We recall $|G_+(s)|^2 \in L^1(\Rm)$, $|G_-(s)|^2 \in L^1 ([R,+\infty))$ and the fact that the maximal function is of weak $(1,1)$ type to deduce that there exists a constant $C$, such that 
\begin{align*}
 \left|\left\{t \in [T,+\infty): g_+(-t) > \kappa \right\}\right| & \leq \frac{C}{\kappa}, & & \forall \kappa > 0; \\
 \left|\left\{t \in [T,+\infty): g_-(t) > \kappa \right\}\right| & \leq \frac{C}{\kappa}, & & \forall \kappa > 0.
\end{align*}
Here the notation $|\cdot|$ represents the Lebesgue measure of a subset of $\Rm$. A combination of these inequalities with \eqref{lower bound of g plus minus} yields
\begin{equation*}  
 \left|\left\{t\in [T,+\infty): \frac{\tau_3^2}{4} \lambda_{J-1}(t)^{-1} > 2\kappa\right\} \right| \leq \frac{2C}{\kappa}, \qquad \forall \kappa > 0. 
\end{equation*}
We may simplify it and write it in the form of 
\begin{equation}\label{estimate of lambda J1} 
 \left|\left\{t\in [T,+\infty): \lambda_{J-1}(t) < \eta \right\} \right| \leq C_\ast \eta, \qquad \forall \eta > 0. 
\end{equation} 
Here $C_\ast$ is a constant independent of $\eta > 0$. Since the soliton resolution implies that (see Remark \ref{match of lambda} below)
\[
 \lim_{t\rightarrow +\infty} \frac{\lambda_{J-1}(t)}{t} = 0,
\]
there exists a number $T_\ast > T$, such that 
\[
 \lambda_{J-1}(t) < \frac{t}{2 C_\ast}, \qquad t \geq T_\ast. 
\]
As a result, we always have $\lambda_{J-1}(t) < \eta$ for all $t \in [T_\ast, 2C_\ast \eta]$, as long as $\eta$ is sufficiently large. Thus we have 
\[
 \left|\left\{t\in [T,+\infty): \lambda_{J-1}(t) < \eta \right\} \right| \geq  2 C_\ast \eta - T_\ast, \qquad \forall \eta \gg 1. 
\]
This gives a contradiction with \eqref{estimate of lambda J1} and finishes the proof in the case of global solution. 

\paragraph{The type II blow-up case} We may assume the radiation part $u_L$ in the soliton resolution comes with a very small energy norm
\[
 \left\|\vec{u}_L(T_+)\right\|_{\dot{H}^1\times L^2} < \varepsilon. 
\]
by applying a cut-off technique if necessary.  Here $\varepsilon = \varepsilon(J) \ll \tau_2(J)$ is a sufficiently small constant. We might further reduce the upper bound of $\varepsilon$ in the argument below but the upper bound always depends on $J$ only. We may define the solution $u$ in the exterior region 
\[
 \{(x,t): |x|>|T-T_+|\}
\] 
by solving (CP1) with initial data $\vec{u}_L(T_+)$. Finite speed of propagation shows that the new exterior solution coincides with the original solution wherever both solutions are defined. By small data theory, we also have 
\[
 \sup_{t \in \Rm} \|\vec{u}(t)\|_{\mathcal{H}(|t-T_+|)} \leq  2\varepsilon. 
\]
Thus we may fix a time $t_0$ slightly smaller than $T_+$ and find a small number $r_0 > 0$, such that 
\[
 \|\vec{u}(t_0)\|_{\mathcal{H}(T_+ - t_0 - r_0)} \leq 3 \varepsilon. 
\]
Again the small data theory implies that $u$ can also be defined in the region $\{(x,t): t<t_0, |x|> T_+ - t - r_0\}$ with 
\[
 \sup_{t\leq t_0} \|\vec{u}(t)\|_{\mathcal{H}(T_+-t-r_0)} \leq 4 \varepsilon. 
\]
In summary, we may define the (nonlinear) radiation profile $G_\pm$ of $u$ with 
\begin{align} \label{upper bound of G plus minus type II}
  &\|G_+\|_{L^2(-T_+,+\infty)} \lesssim_1 \varepsilon, & & \|G_-\|_{L^2(T_+ - r_0, +\infty)} \lesssim_1 \varepsilon. 
\end{align}
As a result, the time-translated solution $u(\cdot, \cdot+t)$ is asymptotically equivalent to a free wave $v_{t,L}$, whose radiation profiles can be given by 
\begin{align*}
 &G_{t,+}(s) = G_+(s-t), \quad s > 0;& & G_{t,-}(s) = G_-(s+t), \quad s>0. 
\end{align*}
By the Strichartz estimates and \eqref{upper bound of G plus minus type II}, the following inequality
\begin{align} 
 \|\chi_0 v_{t,L}\|_{Y(\Rm)} + \sup_{t>0} & \left(\frac{1}{r^{1/2}} \int_{0}^r \left(|G_{t,+}(s)| +|G_{t,-}(s)| \right) {\rm d} s\right) \nonumber\\
  & \lesssim_1 \|G_+\|_{L^2(-t,+\infty)} + \|G_-\|_{L^2(t,+\infty)}
 \lesssim_1 \varepsilon. \label{type II part 1}
\end{align}
holds for $t \in [T_+ - r_0, T_+)$. Following a similar argument to the global case and using the continuity of $\|\vec{u}(t) - \vec{v}_{t,L}(0)\|_{\dot{H}^1\times L^2}$, we may show that $u(\cdot,\cdot +t)$ must be a $J$-bubble exterior solution for these times $t$, as long as $\varepsilon < \varepsilon(J)$ is sufficiently small. It follows from Proposition \ref{prop concentration}, our assumption $\varepsilon \ll \tau_3$ and \eqref{type II part 1} that 
\[
 \left(\sup_{0<r<\lambda_{J-1}(t)} \frac{\lambda_{J-1}(t)}{r} \int_{0}^r \left(|G_+(s-t)|^2 + |G_-(s+t)|^2 \right) {\rm d} s\right)^{1/2} > \frac{\tau_3}{2}, \qquad t\in [T_+ - r_0, T_+). 
\]
The same argument as in the case of global solutions shows that there exists a constant $C_\ast > 0$ such that 
\begin{equation}\label{estimate of lambda J1 2} 
 \left|\left\{t\in [T_+ - r_0, T_+): \lambda_{J-1}(t) < \eta \right\} \right| \leq C_\ast \eta, \qquad \forall \eta > 0. 
\end{equation} 
We recall that the following holds in the soliton resolution 
\[
 \lim_{t\rightarrow T_+} \frac{\lambda_{J-1}(t)}{T_+ - t} = 0,
\]
thus there exists a small constant $r_1 < r_0$, such that 
\[
 \lambda_{J-1}(t) < \frac{1}{2C_\ast} (T_+ - t), \qquad \forall t \in [T_+-r_1,T_+). 
\]
As a result, we always have $\lambda_{J-1}(t) < \eta$ for all $t \in [T_+ - 2C_\ast \eta, T_+)$, as long as $\eta$ is sufficiently small, which immediately gives a contradiction with \eqref{estimate of lambda J1 2} and finishes the proof in the type II blow-up case. 
\begin{remark} \label{match of lambda}
 The scale $\lambda_{J-1}(t)$ in the argument above is given by Proposition \ref{main tool}, which is not necessarily the same as the scale $\lambda_j^\ast(t)$ in the soliton resolution
 \[
  \vec{u}(t) = \sum_{j=1}^J \zeta_j^\ast (W_{\lambda_j^\ast(t)},0) + \vec{u}_L(t) + o(1), \qquad t\rightarrow T_+. 
 \]
 However, when $t$ is sufficiently close to $T_+$, we may apply Proposition \ref{main tool} and utilize the soliton resolution to deduce 
 \begin{align*}
  \left\|\sum_{j=1}^J \zeta_j^\ast (W_{\lambda_j^\ast(t)},0) - \sum_{j=1}^J \zeta_j (W_{\lambda_j(t)},0)\right\|_{\dot{H}^1\times L^2} & = \left\|\vec{u}(t) - \vec{u}_L(t)-o(1) - \sum_{j=1}^J \zeta_j (W_{\lambda_j(t)},0)\right\|_{\dot{H}^1\times L^2} \\
  & \leq \left\|\vec{u}(t) - \vec{v}_{t,L} (0) - \sum_{j=1}^J \zeta_j (W_{\lambda_j(t)},0)\right\|_{\dot{H}^1\times L^2} \\
  & \quad + \left\|\vec{v}_{t,L}(0) - \vec{u}_L(t) - o(1)\right\|_{\dot{H}^1\times L^2} \\
  & \lesssim_J \varepsilon;
 \end{align*}
 and ($j=1,2,\cdots,J-1$)
 \begin{align*}
  &\frac{\lambda_{j+1}(t)}{\lambda_j(t)} \lesssim_J \varepsilon^2;& & \frac{\lambda_{j+1}^\ast(t)}{\lambda_j^\ast(t)} < \varepsilon^2. 
 \end{align*}
 This immediately gives $\lambda_j (t)\simeq_1 \lambda_j^\ast (t)$, as long as $\varepsilon < \varepsilon(J)$ is sufficiently small. As a result, we still have 
 \[
  \lim_{t\rightarrow T_+} \frac{\lambda_{j}(t)}{T_+-t}  = 0, \qquad j=1,2,\cdots,J. 
 \]
 The situation of global solutions is similar (and even better). In fact, in the global case we have 
 \begin{align*}
  \lim_{t\rightarrow +\infty} \left\|\sum_{j=1}^J \zeta_j^\ast (W_{\lambda_j^\ast(t)},0) - \sum_{j=1}^J \zeta_j (W_{\lambda_j(t)},0)\right\|_{\dot{H}^1\times L^2} & = 0; \\
  \lim_{t\rightarrow +\infty} \left(\frac{\lambda_{j+1}(t)}{\lambda_j(t)} + \frac{\lambda_{j+1}^\ast(t)}{\lambda_j^\ast(t)}\right) & = 0, \quad j=1,2,\cdots,J-1; 
 \end{align*}
 which implies that 
 \[
  \lim_{t\rightarrow +\infty} \frac{\lambda_j(t)}{\lambda_j^\ast(t)} = 1; \qquad \Longrightarrow \qquad \lim_{t\rightarrow +\infty} \frac{\lambda_j(t)}{t} = 0,\quad j=1,2,\cdots,J.  
 \]
\end{remark}
\section*{Acknowledgement}
The author is financially supported by National Natural Science Foundation of China Project 12471230.

\end{document}